\documentclass{amsart}
\usepackage{latexsym,amssymb,enumerate, amsmath}

\renewcommand{\o}{\overline}

\newenvironment{enumeratei}{\begin{enumerate}[\upshape (a)]}
    {\end{enumerate}}
\newenvironment{enumeraten}{\begin{enumerate}[\upshape (i)]}
    {\end{enumerate}}

\newtheorem{theorem}{Theorem}[section]

\newtheorem{lemma}[theorem]{Lemma}
\newtheorem{proposition}[theorem]{Proposition}
\newtheorem*{thmA}{Theorem A}

\newtheorem*{thmC}{Theorem C}
\newtheorem*{corB}{Corollary B}

\theoremstyle{definition}

\newtheorem{rem}[theorem]{Remark}

\newcommand{\C}{{\mathbb C}}
\newcommand{\K}{{\mathbb K}}
\newcommand{\F}{{\mathbb F}}
\def\irr#1{{\rm Irr}(#1)}

\def\cd#1{{\rm cd}(#1)}

\def\cent#1#2{{\bf C}_{#1}(#2)}

\def\syl#1#2{{\rm Syl}_#1(#2)}
\def\nor{\trianglelefteq\,}
\def\norm#1#2{{\bf N}_{#1}(#2)}

\def\oh#1#2{{\bf O}_{#1}(#2)}
\def\zent#1{{\bf Z}(#1)}
\def\hyperzent#1{{\bf {Z_{\infty}}}(#1)}
\def\sbs{\subseteq}

\def\fit#1{{\bf F}(#1)}
\def\fitt#1{{\bf F}_2(#1)}
\def\diam#1{{\rm diam}(#1)}
\def\frat#1{{\bf \Phi}(#1)}
\def\GF#1{{\rm GF}(#1)}

\def\V#1{{\rm V}(#1)}
\def\E#1{{\rm E}(#1)}

\def\nfr#1{\widehat{#1}^{*}}

\begin{document}

\title{Groups whose character degree graph has diameter three}

\author[C. Casolo et al.]{Carlo Casolo}
\address{Carlo Casolo, Dipartimento di Matematica e Informatica U. Dini,\newline
Universit\`a degli Studi di Firenze, viale Morgagni 67/a,
50134 Firenze, Italy.}
\email{carlo.casolo@unifi.it}

\author[]{Silvio Dolfi}
\address{Silvio Dolfi, Dipartimento di Matematica e Informatica U. Dini,\newline
Universit\`a degli Studi di Firenze, viale Morgagni 67/a,
50134 Firenze, Italy.}
\email{dolfi@math.unifi.it}

\author[]{Emanuele Pacifici}
\address{Emanuele Pacifici, Dipartimento di Matematica F. Enriques,
\newline Universit\`a degli Studi di Milano, via Saldini 50,
20133 Milano, Italy.}
\email{emanuele.pacifici@unimi.it}

\author[]{Lucia Sanus}
\address{Lucia Sanus, Departament d'\`Algebra, Facultat de
  Matem\`atiques, \newline
Universitat de Val\`encia,
46100 Burjassot, Val\`encia, Spain.}
\email{lucia.sanus@uv.es}

\dedicatory{Dedicated to the memory of Laci Kov\'acs}
\thanks{The first three authors are partially supported by the Italian INdAM-GNSAGA. The fourth author is partially supported
by  the Spanish  MINECO proyecto MTM2013-40464-P,  partly with  FEDER funds and Prometeo2011/030-Generalitat Valenciana.}

\subjclass[2000]{20C15}

\begin{abstract}
Let \(G\) be a finite group, and let \(\Delta(G)\) denote the \emph{prime graph} built on the set of  degrees of the irreducible complex characters of \(G\). It is well known that, whenever \(\Delta(G)\) is connected, the diameter of \(\Delta(G)\) is at most \(3\). In the present paper, we provide a description of the finite solvable groups for which the diameter of this graph attains the upper bound. This also enables us to confirm a couple of conjectures proposed by M.L. Lewis.
\end{abstract}

\maketitle

\section{Introduction}

Let  $G$ be a finite group; we denote by $\irr G$ the set of all irreducible complex characters of $G$, and write
$${\rm cd}(G) = \{ \chi (1)\mid \chi \in  \irr G\}$$
for the set of the degrees of such characters. The character degree graph $\Delta (G)$ is thus defined as the graph with vertex set the set $\rho (G)$ of all the primes that divide some $\chi (1)\in {\rm cd}(G)$, and two distinct primes $p$ and $q$ are adjacent if and only if $pq$ divides some degree in cd$(G)$.
The study of the graph $\Delta (G)$ and of the relationships between the properties of $\Delta (G)$ and the structural features of the group $G$, has by now a rich literature (we recommend the survey paper \cite{Lew0} for a general  overview of the subject), and the purpose of this paper is to contribute to one particular aspect of this research.

A fundamental result of P.P. Palfy (\cite{PPP1}) ensures that if $G$ is a solvable group, then given any three distinct primes in $\rho (G)$, at least two of them are adjacent in $\Delta (G)$.  From this it immediately follows that, for a solvable group $G$, $\Delta (G)$ has at most two connected components (both inducing a complete subgraph of $\Delta (G)$), and that, when $\Delta (G)$ is connected, the diameter of  $\Delta (G) $ is at most  $3$ (that this latter inequality holds in any finite group is proved in \cite{LW}).
For some time it has been unknown whether there existed solvable groups whose character degree graph has diameter $3$, until  the question was settled by  M.L. Lewis, who constructed in \cite{Lew2} a solvable group $G$ such that $\Delta (G)$ has $6$ vertices and diameter $3$.

It was Lewis construction and his related comments (made particularly explicit in \cite{Lew0}) that prompted us to study in more detail solvable groups $G$  such that  the diameter of $\Delta (G)$ is   $3$.
Through our analysis in the present paper, we pin their structure down enough to show that they all closely resemble Lewis' examples, and to allow to confirm a couple of conjectures appearing in \cite{Lew0}.

In the following statement, which is the main result of this paper, $\fit G$ denotes the Fitting subgroup of the group $G$ and, for $i\ge 1$, $\gamma_i(P)$ is the $i$-th term of the lower central series of  the \(p\)-group $P$.

\begin{thmA}  Let $G$ be a finite solvable group such that $\Delta (G)$ is connected and  $\diam{\Delta (G)} = 3$. Then the following conclusions hold.
\begin{enumeratei}
\item There exists a prime $p$ such that $G = PH$, with $P$ a normal non-abelian Sylow $p$-subgroup of $G$ and $H$ a $p$-complement.
\item $\fit G= P\times A$, where $A = \cent HP \le \zent G$, $H/A$ is not nilpotent and has cyclic Sylow subgroups.
\item  $\Delta (G/\gamma_3(P))$ is disconnected.
\item  If $c$ is the nilpotency class of $P$, then all factors $M_1 = [P,G]/P'$ and $M_i = \gamma_i(P)/\gamma_{i+1}(P)$, for $2\le i\le c$, are chief factors of $G$ of the same order $p^n$, where $n$ is divisible by at least two distinct odd primes; moreover, for all $1\le i\le c$,  $G/\cent G{M_{i}}$ embeds as an irreducible subgroup in the group of semi-linear transformations $\Gamma (p^n)$.
\end{enumeratei}
\end{thmA}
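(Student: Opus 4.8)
\emph{Overall strategy.} The plan is to first turn the diameter hypothesis into rigid combinatorial data about $\Delta(G)$ by means of P\'alfy's theorem, and then to read off the four conclusions in the order (a), (b)+(d), (c), always using the two ``endpoint'' primes as the organising device. Fix primes $p,q\in\rho(G)$ realising $d(p,q)=3$, and let $A_p,A_q$ be their neighbourhoods; these are disjoint and avoid $q$, resp.\ $p$. For any further prime $r$, P\'alfy's theorem applied to the non-adjacent pair inside $\{p,q,r\}$ shows $r\in A_p\cup A_q$, so $\rho(G)=\{p,q\}\cup A_p\cup A_q$. Applying P\'alfy to $\{q,r_1,r_2\}$ with $r_1,r_2\in A_p$ shows $A_p$ is a clique, and symmetrically for $A_q$; hence $\{p\}\cup A_p$ and $\{q\}\cup A_q$ are complete subgraphs, $p$ has no neighbour in $\{q\}\cup A_q$ (and vice versa), and connectivity forces at least one ``bridge'' edge between $A_p$ and $A_q$. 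Note that any bridge degree is coprime to both $p$ and $q$, since a common multiple of $p$ with a vertex of $A_q$ would move that vertex into $A_p$.

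\emph{Part (a).} The crux is to produce a normal non-abelian Sylow subgroup. Each endpoint is non-adjacent to an \emph{entire} clique of size at least two, and I would leverage this strong non-adjacency through the known classification of solvable groups with \emph{disconnected} degree graph, applied to a section of $G$ in which the bridge edges are removed. The outcome should be a prime $p$ carrying a normal non-abelian Sylow $p$-subgroup $P$: It\^o--Michler together with $p\in\rho(G)$ gives non-abelianity, and after relabelling $p$ is one of the two endpoints, because the primes non-adjacent to $p$ are exactly those dividing only $p'$-degrees. Schur--Zassenhaus then supplies a complement $H$, so $G=PH$. The \textbf{main difficulty} of the whole theorem sits here: one must exclude the Frobenius- and $2$-Frobenius-type outcomes of the disconnected classification, and it is precisely the hypothesis that $p$ and $q$ lie at distance $3$ in a \emph{connected} graph, rather than in two separate components, that rules them out.

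\emph{Parts (b) and (d).} With $P=\oh p G\in\syl p G$ and complement $H$, I analyse degrees by Clifford theory over $P$: a degree is prime to $p$ exactly when the character lies over a linear $\lambda\in\irr P$ (equivalently over $\irr{P/P'}$), while $p$-multiple degrees come from the non-linear $\theta\in\irr P$, each of which extends to its inertia subgroup by coprimality of $|P|$ and $|H|$ and solvability. Thus $\{q\}\cup A_q$ is governed by the $H$-action on $\irr{P/P'}$ and $\{p\}\cup A_p$ by the non-linear characters. Tracking which primes can occur yields $\fit G=P\times A$ with $A=\cent HP$; that $A\le\zent G$ and that $H/A$ is non-nilpotent both follow because any failure would create extra adjacencies merging the two cliques and lowering the diameter. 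For (d) the engine is the theory of solvable irreducible linear groups acting on the chief factors $M_1=[P,G]/P'$ and $M_i=\gamma_i(P)/\gamma_{i+1}(P)$: the orbit-size pattern imposed by the clique structure forces, via the Huppert--Manz--Wolf description of such actions, each $G/\cent G{M_i}$ to embed as an irreducible subgroup of the semi-linear group $\Gamma(p^n)$ with all $M_i$ of one common order $p^n$; the metacyclic shape of $\Gamma(p^n)$ is what delivers the cyclic Sylow subgroups of $H/A$. Finally, divisibility of $n$ by two distinct odd primes is the numerical condition separating diameter $3$ from diameter $2$, since a single prime divisor of $n$ would let the neighbourhoods of $p$ and $q$ meet.

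\emph{Part (c).} It remains to see that $\Delta(G/\gamma_3(P))$ is disconnected. Every $p'$-degree of $G$ is already a degree of $G/P'$, since a character lying over a linear character of $P$ is trivial on $P'\supseteq\gamma_3(P)$; hence the clique $\{q\}\cup A_q$, the bridge edges, and the $A_p$-vertices they meet all persist in the quotient. The adjacencies that let $p$ reach this bridge, however, are carried by non-linear characters of $P$ genuinely supported on the terms $\gamma_i(P)$ with $i\ge3$, and these are killed in $G/\gamma_3(P)$; so there $p$, together with its surviving neighbours, forms a component disjoint from the one containing $q$. Making this precise is exactly where the chief-factor analysis of (d) re-enters, and by P\'alfy's theorem the resulting disconnected graph is a union of two complete components.
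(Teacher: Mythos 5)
Your proposal contains a concrete factual error at the point where it claims to identify the prime $p$. You assert that ``after relabelling $p$ is one of the two endpoints'' of the distance-three path. The opposite is true: the prime $p$ carrying the normal non-abelian Sylow subgroup satisfies $d_G(p,v)\leq 2$ for \emph{every} vertex $v$ (this is proved in the paper's Remark~\ref{spiegazione}, and is already announced in the introduction, where the endpoints $r,s$ of a geodesic of length $3$ are noted to satisfy $r\neq p\neq s$). In Lewis' example, $p=2$ is adjacent to all of $\pi_1\setminus\{p\}=\{7,31,151\}$ and within distance $2$ of $\pi_2=\{3,5\}$; the endpoints at distance $3$ are one prime from $\pi_1\setminus\{p\}$ and one from $\pi_2$. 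So the organising device of your whole argument --- analysing everything from the viewpoint of the two endpoints, one of which is supposed to be $p$ --- starts from a false premise, and the justification you give (``the primes non-adjacent to $p$ are exactly those dividing only $p'$-degrees'') does not yield the claim.

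Beyond this, the proposal is an outline rather than a proof: every step that carries actual mathematical weight is deferred. In part (a) you write ``the outcome should be'' a normal non-abelian Sylow subgroup, and you yourself flag that excluding the other outcomes of the disconnected classification is ``the main difficulty of the whole theorem'' --- but no argument is offered, nor is it explained which section of $G$ has disconnected graph or why (note also that the existence of a prime with $\oh pG$ non-abelian rests on Morresi Zuccari's theorem that a connected graph with abelian Fitting subgroup has diameter at most $2$, which you never invoke). In (b)/(d), phrases like ``tracking which primes can occur yields $\fit G=P\times A$'' and ``the orbit-size pattern \dots forces, via the Huppert--Manz--Wolf description'' restate the desired conclusions without deriving them. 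In (c), the assertion that the adjacencies joining $p$ to the bridge ``are carried by non-linear characters of $P$ genuinely supported on the terms $\gamma_i(P)$ with $i\ge 3$'' \emph{is} the content of the theorem, and you defer its proof back to (d), which was itself not proved. For comparison, the paper's route is a minimal-counterexample induction reducing to the case where $P'$ is a minimal normal subgroup, whose technical core (Proposition~\ref{maxiemendamento}) shows that the relevant configuration forces $\Delta(G)$ to be \emph{disconnected}; this requires the counting lemma for non-fully-ramified characters (Lemma~\ref{f.r.}), the tensor/wedge and self-contragredient module lemmas (Lemmas~\ref{tensor} and~\ref{modules}), and the semi-linear machinery (Lemmas~\ref{semilinear0}--\ref{lemmaB}). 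None of that machinery, or a substitute for it, appears in your proposal, so the gap is not a matter of missing details but of the entire argument.
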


In particular,  we have that  $\Delta (G/\gamma_3(P))$ is a disconnected subgraph of $\Delta (G)$
with the same set of vertices, thus confirming a suggestion of Lewis (\cite{Lew0}).  In fact, it will not be hard to derive a proof of a related conjecture concerning the structure of the graph $\Delta (G)$, when $G$ is solvable and $\diam{\Delta(G)} = 3$. Let $r,s$ be two vertices of $\Delta (G)$ with $d(r,s) = 3$, and denote by $\pi_1$ and $\pi_2$ the sets consisting of $r$, respectively $s$, and all vertices adjacent to it; then $\pi_1\cap \pi_2 = \emptyset$ and (by Palfy's three primes condition)  $\rho (G) = \pi_1 \cup \pi_2$. Moreover, denoting by \(\fitt G\) the second Fitting subgroup of \(G\) and supposing $p\in \pi_1$ (we will see that $r\ne p\ne s$), we will show that $\pi_1 = \pi (\fitt G/\zent G)$, $\pi_2 = \pi (G/\fitt G)$,  $2\not\in \pi_2$, $|\pi_2|\ge 2$ and $\pi_1$, $\pi_2$ both induce complete subgraphs of $\Delta (G)$ (see Remark~\ref{spiegazione}). Indeed, $\pi_1$ and $\pi_2$ are the set of vertices of the two connected components of $\Delta (G/\gamma_3(P))$; it follows (see Remark~\ref{spiegazione}) that
$$|\pi_1| \ge 2^{|\pi_2|} .$$
Hence, Conjecture 4.8 of \cite{Lew0}  is established; in particular, $|\rho (G)| \ge 6$ (indeed,  it turns out that Lewis' example has the smallest possible order).

Still in the spirit of another suggestion by Lewis (see the paragraph following 5.8 in \cite{Lew0}), an immediate consequence of Theorem A is the following result.

\begin{corB}
Let \(G\) be a finite solvable group, and assume that \(\Delta(G)\) is connected with diameter \(3\). Then the Fitting height of \(G\) is precisely \(3\). In fact, \(G\) is a nilpotent-by-metacyclic group.
\end{corB}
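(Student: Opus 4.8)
The plan is to read everything off from Theorem~A, so that Corollary~B becomes a short bookkeeping exercise about the Fitting series. First I would identify the relevant top quotient. By Theorem~A(a) we have $G = PH$ with $P$ a normal Sylow $p$-subgroup and $H$ a $p$-complement, and by Theorem~A(b) $\fit G = P\times A$ with $A = \cent HP \le \zent G$ and $A\le H$. Since $\fit G = PA$ and an element $pa\in H$ with $p\in P$, $a\in A\le H$ forces $p = (pa)a^{-1}\in P\cap H = 1$, we get $H\cap PA = A$, and the isomorphism theorem yields $G/\fit G \cong H/A$.

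Next I would feed the structural information on this quotient into the two claims. Theorem~A(b) states that $H/A$ is non-nilpotent and has all its Sylow subgroups cyclic; by the classical theorem on $Z$-groups (H\"older--Burnside--Zassenhaus), a finite group all of whose Sylow subgroups are cyclic is metacyclic. Hence $G/\fit G\cong H/A$ is metacyclic. Taking $\fit G$ as a nilpotent normal subgroup with metacyclic quotient $G/\fit G$ then exhibits $G$ as nilpotent-by-metacyclic, which is the second assertion of the corollary.

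For the Fitting height I would use the standard additivity relation $F_{i+1}(G)/\fit G = F_i(G/\fit G)$ for the terms of the Fitting series, equivalently $h(G) = 1 + h(G/\fit G)$ for a nontrivial solvable group, where $h(\cdot)$ denotes Fitting height. A metacyclic group $M$ has its cyclic normal subgroup contained in $\fit M$, so $M/\fit M$ is cyclic and $h(M)\le 2$; being non-nilpotent, $G/\fit G$ thus has Fitting height exactly $2$. Therefore $h(G) = 1 + 2 = 3$, as required.

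There is no genuine obstacle at this stage: the whole content of the statement has already been secured by Theorem~A, and the two points that need any care are the correct identification of $G/\fit G$ with $H/A$ and the invocation of the $Z$-group structure theorem to pass from ``cyclic Sylow subgroups'' to ``metacyclic''. Everything else is the elementary arithmetic of the Fitting series.
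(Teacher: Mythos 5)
Your proof is correct and takes essentially the same approach as the paper: the paper's own (very terse) proof also reads off from Theorem~A that \(G/\fit G\) has all Sylow subgroups cyclic and is not nilpotent, invokes the same classical fact that such a group is metacyclic, and concludes. The details you supply --- the identification \(G/\fit G\cong H/A\) and the Fitting-height arithmetic \(h(G)=1+h(G/\fit G)=3\) --- are precisely the steps the paper leaves implicit.
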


As it is apparent from the above remarks, a central role in our treatment is played by solvable groups with disconnected degree graph, the main features of those we will need to have almost constantly in hand. For this, our main source is their description in \cite{L} (although similar results also appear in~\cite{Zh} and~\cite{PPP2}), and we in particular refer to the list of six subcases in section 2 (and 3) of that paper. From the same arguments that prove Theorem A, we derive a result which we believe adds to the understanding of case 2.6 in \cite{L}.

\begin{thmC}  Let $G$ be a finite solvable group such that $\Delta (G)$ is disconnected and $\fit G$ is not abelian. Then there is a unique prime $p$ such that $P = \oh pG$ is not contained in $\zent G$ and
\begin{enumeratei}
\item either $p$ is an isolated vertex of $\Delta (G)$, or
\item  $\Delta (G/P')$ is disconnected and, if $c$ is the nilpotency class of $P$, all factors $M_1 = [P,G]/P'$ and $M_i = \gamma_i(P)/\gamma_{i+1}(P)$, for $2\le i\le c$, are chief factors of $G$ of the same order $p^n$, with $n \geq 3$; moreover, for all $1\le i\le c$,  $G/\cent G{M_{i}}$ embeds as an irreducible subgroup in the group of semi-linear transformations $\Gamma (p^n)$.
\end{enumeratei}
\end{thmC}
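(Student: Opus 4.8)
The plan is to extract Theorem C directly from the structural analysis carried out for Theorem A, since both results share the same mechanism: a solvable group whose Fitting subgroup is non-abelian, together with a ``disconnectedness'' hypothesis on a suitable quotient's degree graph. The starting point is to locate the relevant prime. Since $\fit G$ is non-abelian, its Sylow subgroups cannot all be central, and I would first invoke Palfy's condition together with the classification of solvable groups with disconnected $\Delta(G)$ (the six cases of \cite{L}) to show that there is a \emph{unique} prime $p$ with $P=\oh pG \not\le \zent G$. The uniqueness is the point where the disconnectedness of $\Delta(G)$ does real work: if two distinct primes $p,q$ had non-central Sylow normal subgroups, one would produce, via the usual Gallagher/Clifford-theoretic argument on characters of $\fit G$ lying over faithful characters of $P$ and $Q$, two ``independent'' degree contributions forcing $p$ and $q$ into the same component while simultaneously linking them to vertices on both sides — contradicting that $\rho(G)$ splits into two complete components.

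Next I would set up the dichotomy (a)/(b) according to the position of $p$ in the graph. If $p$ is an isolated vertex we are in case (a) and there is nothing further to prove. So assume $p$ is not isolated; the goal is then to reproduce, verbatim, the conclusions established in the proof of Theorem A(iii)--(iv), but \emph{without} assuming $\diam{\Delta(G)}=3$. Concretely, I would show first that $\Delta(G/P')$ is disconnected. The idea is that passing to $G/P'$ abelianizes the non-central $p$-part, and the characters of $G/P'$ lying over the non-trivial linear characters of $P/P'$ are exactly those whose degrees ``carry'' the $p$-adjacencies coming from the action of $G$ on $[P,G]/P'$; the classification forces these to sit in a separate component from the degrees coming from $G/\fit G$. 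This is essentially the content extracted in Theorem A, since in that setting $\Delta(G/\gamma_3(P))$ was shown disconnected, and the argument for $\Delta(G/P')$ is the analogous but milder statement (one only quotients by $P'$ rather than by $\gamma_3(P)$).

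The heart of the matter — and what I expect to be the main obstacle — is the statement about the chief factors $M_1=[P,G]/P'$ and $M_i=\gamma_i(P)/\gamma_{i+1}(P)$. I would prove that each $M_i$ is a chief factor of $G$, that they all share a common order $p^n$, and that $G/\cent G{M_i}$ embeds as an irreducible subgroup of the semilinear group $\Gamma(p^n)$. The embedding into $\Gamma(p^n)$ and irreducibility follow from Palfy's three-primes condition applied to the faithful action on each $M_i$: a solvable irreducible linear group over $\F_p$ whose ``degree primes'' avoid a full clique must be semilinear, and one uses the standard dictionary between the primitive/imprimitive structure of the module and the adjacencies it contributes to $\Delta(G)$. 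The coincidence of orders $p^n$ across all $i$, and the uniformity of the $\Gamma(p^n)$-action, is the delicate part: I would argue inductively up the lower central series, showing that $G/\cent G{M_i}$ acts on $M_{i+1}$ compatibly (via the commutator map $M_1\otimes M_i\to M_{i+1}$), so that the module structure propagates and the orders cannot drop or grow. The only genuine difference from Theorem A is the weaker numerical conclusion: here we obtain merely $n\ge 3$ rather than ``$n$ divisible by at least two distinct odd primes,'' because we no longer have the diameter-$3$ hypothesis to force the second component $\pi_2$ to contain at least two odd primes. Thus the bound $n\ge 3$ will come simply from the requirement that $\Gamma(p^n)$ admit a solvable irreducible subgroup whose degree graph is disconnected and non-trivial, which already fails for $n\le 2$.
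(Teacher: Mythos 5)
Your overall plan---extracting Theorem~C as a by-product of the machinery behind Theorem~A, quoting Lewis's results for the uniqueness of $p$ (this is exactly Lemma~\ref{lewis}) and for the disconnectedness of $\Delta(G/P')$ (which is immediate from Theorem~\ref{class}(c), since $G/P'$ is there a group of type (b))---is indeed the paper's route: Theorem~C is obtained from Theorem~\ref{main} together with Lemma~\ref{C}. The genuine gap lies in the heart of the matter, namely your mechanism for proving that the factors $M_i$ are chief factors of a \emph{common} order $p^n$ with $G/\cent G{M_i}$ embedded irreducibly in $\Gamma(p^n)$. The commutator map induces a surjective $G$-homomorphism from $(P/P')\otimes M_i$ onto $M_{i+1}$, but surjectivity only bounds $|M_{i+1}|$ from above; it gives no lower bound, and neither irreducibility nor order passes to quotients of tensor products. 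In a generic $p$-group the lower central factors do shrink (in a group of maximal class, $|P/P'|=p^2$ while all later factors have order $p$), so no purely module-theoretic ``propagation'' up the lower central series can force the orders to coincide: that equality is created by character theory, which is absent from your outline.

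The paper's actual engine is the following, and nothing in your sketch replaces it. Since $\Delta(G)$ is disconnected of type (c) of Theorem~\ref{class} and $p$ is not isolated, there is a prime $s\in\pi(H/\fit H)$ ($H$ a $p$-complement) lying in the connected component not containing $p$, hence non-adjacent to $p$. For $i\ge 2$ one has $\gamma_i(P)\le P'$, so every $\theta\in\irr{P|\mu}$ with $1\neq\mu\in\widehat{M_i}$ is non-linear and has degree divisible by $p$; combining \cite[Theorem 13.28]{I} with Clifford correspondence and Gallagher's theorem, the non-adjacency of $p$ and $s$ then forces \emph{every} centralizer $\cent H{\mu}$ to contain a Sylow $s$-subgroup of $H$ as a normal subgroup. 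Lemma~\ref{lemmaB} (which rests on primitive prime divisors and Lemma~\ref{ppd}) converts this uniform condition, applied pairwise to the modules $\widehat{M_i}$, into exactly the desired conclusion: each $M_i$ is a chief factor, all have the same order, and the actions are semilinear. Note also that semilinearity enters through Lemma~\ref{semilinear1}, not through ``Palfy's three-primes condition'': your assertion that an irreducible solvable linear group whose degree primes avoid a clique ``must be semilinear'' is not a theorem in this form and essentially assumes what is to be proved. Finally, your justification of $n\ge 3$ is not a proof either: the paper derives it from the arithmetic of Lemma~\ref{semilinear0}, namely that $s$ divides $n$ and $s$ is coprime to $p^n-1$, which together with $s\neq p$ rules out $n\le 2$; the claim that $\Gamma(p^n)$ with $n\le 2$ admits no relevant subgroup with disconnected non-trivial degree graph is left unsubstantiated in your sketch.
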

We conjecture that, both in the disconnected and in the diameter-three case, the chief factors $M_i$ are pairwise non-isomorphic as \(G\)-modules over ${\rm{GF}}(p)$; this is true for the first pair $M_1$ and $M_2$ (see point (a) in the proof of Proposition \ref{maxiemendamento}), but we were not able to prove it in general. Another question that we leave open is whether, in both Theorem~A and Theorem~C, one has $P = [P,G]\times \zent G$ (again, this is true modulo $\gamma_3(P)$). Finally, by looking at the known examples, one might ask if it is true that, in Theorem A, not only $\Delta(G/\gamma_{3}(P))$ but also $\Delta(G/\gamma_{c}(P))$ is disconnected.


\section{Notation and preliminaries}

Throughout this paper, every group is tacitly assumed to be a finite group. We write $\V G$ and $\E G$ for the sets of vertices and edges,
respectively, of the prime graph $\Delta(G)$ on irreducible character degrees. We denote by $d_G(u,v)$ the distance  in $\Delta(G)$
between the two (distinct) vertices $u$ and \(v\) (i.e. the length of a shortest path joining $u$
and $v$; set $d_G(u,v) = \infty$ if there is no such path), and by  $\diam {\Delta (G)}$ the maximum of $d_G(u,v)$  for   $u,v \in \V G$ if ${\Delta (G)}$ is connected (whereas we set $\diam {\Delta (G)}=\infty$  if $\Delta(G)$ is not connected).

 As customary, we denote  by $\Gamma(p^n)$ the semi-linear group on  the field $\GF{p^n}$, and by $\Gamma_0(p^n)$ the subgroup of $\Gamma (p^n)$ induced by the field multiplications. If \(V\) is an \(n\)-dimensional vector space over \({\rm{GF}}(p)\), then \(V\) can be identified with the additive group of a field of order \(p^n\), and in this sense we write \(\Gamma(V)\) and \(\Gamma_0(V)\) for \(\Gamma(p^n)\) and \(\Gamma_0(p^n)\) respectively.

Let $a>1$ and $n$ be positive integers.  A prime $t$ is called a \emph{primitive prime divisor} for  $(a,n)$ if $t$ divides $a^n-1$ but $t$ does not  divide $a^j-1$ for $1\leq j< n$. Recall that,  by a well-known result by Zsigmondy (\cite[Theorem 6.2]{MW}), such a prime always exists except when $n=6$ and $a=2$, or $n=2$ and $a+1$ is a power of $2$.

Let $N$ be a normal subgroup of $G$ and let $\lambda \in \irr N$. We denote by $\irr{G|\lambda}$ the set of irreducible characters $\chi$ of $G$ such that $\lambda$ is an  irreducible constituent of $\chi_N$. In this setting, \(\chi\) and \(\lambda\) are said to be \emph{fully ramified} with respect to \(G/N\) (but sometimes, when the context is clear enough, we also say that \(\lambda\) is fully ramified in \(G\)) if \(\chi_N=e\lambda\) with \(e^2=|G:N|\). By \cite[Problem 6.3]{I}, this is equivalent to the fact that \(\chi\) vanishes on \(G\setminus N\) with \(\lambda\) invariant in \(G\), and also to the fact that \(\chi\) is the unique irreducible constituent of \(\lambda^G\) still with \(\lambda\) invariant in \(G\).

If \(A\) is an abelian group, we write \(\widehat A\) to denote the dual group of A, that is, the set \(\irr A\) endowed with multiplication of characters.

Also, we freely use without references some basic  facts of Character Theory  such as  Clifford Correspondence, Gallagher's Theorem, Ito-Michler's Theorem, results concerning character extension and coprime actions (see \cite{I}).

 We shall also take into account the following well-known result concerning character degrees.

 \begin{lemma}[\mbox{\cite[Proposition 17.3]{MW}}]
\label{brodkey} Let $G$ be a solvable group. Let $F = \fit G$ and $K = {\bf F}_2(G)$. Then there exists $\chi\in\irr G$
such that $\pi(K/F) \sbs \pi(\chi(1))$.
\end{lemma}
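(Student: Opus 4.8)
The plan is to argue by induction on $|G|$, converting the action of $K/F$ on $F$ into divisibility of a single character degree by means of Clifford theory. The one structural fact I would rely on at the outset is the standard self-centralizing property of the Fitting subgroup in a solvable group, namely $\cent G F = \zent F$; write $Z = \zent F$. If $K = F$ then $\pi(K/F)=\emptyset$ and any $\chi\in\irr G$ works, so assume $K > F$. For each prime $q\in\pi(K/F)$, a Sylow $q$-subgroup $Q/F$ of the nilpotent group $K/F$ is a nontrivial normal $q$-subgroup; since $\cent G F = Z\le F$, the preimage $Q$ cannot centralize $F$ (otherwise $Q\le\cent G F\le F$, forcing $Q/F=1$), so $[Q,F]\ne 1$. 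Thus every prime of $K/F$ acts nontrivially on $F$, and the whole point is to turn these nontrivial actions, simultaneously, into the single conclusion $\pi(K/F)\subseteq\pi(\chi(1))$.

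The mechanism I would use is the elementary Clifford estimate: for $N\nor G$ and $\lambda\in\irr N$ with inertia group $T=G_\lambda$, the Clifford correspondence gives a bijection $\irr{T\mid\lambda}\to\irr{G\mid\lambda}$, $\psi\mapsto\psi^G$, with $\psi^G(1)=|G:T|\,\psi(1)$; in particular $|G:T|$ divides every degree in $\irr{G\mid\lambda}$. I would apply this with $N=Z$ (or, as discussed below, with a suitable $\theta\in\irr F$), choosing $\lambda\in\irr Z$ and setting $T=G_\lambda$, and then balance two contributions. On one side, every prime $q$ for which the corresponding $Q$ moves $\lambda$ lands in $\pi(|G:T|)$ and hence in $\pi(\chi(1))$ for free. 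On the other side, the primes of $K/F$ that fix $\lambda$ must be recovered \emph{inside} $T$: here one passes to $T$, verifies that $\fit T$ and $\fitt T$ are controlled by $F$ and $K$ so that the residual primes still lie in $\pi(\fitt T/\fit T)$, applies the inductive hypothesis to produce $\psi\in\irr{T\mid\lambda}$ carrying those primes in $\psi(1)$, and finally takes $\chi=\psi^G$. The product $|G:T|\,\psi(1)$ then absorbs both families of primes at once.

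The genuinely delicate point, and the one I expect to be the main obstacle, is twofold. First, the action of $K/F$ on $F$ need not be coprime: a prime $q\in\pi(K/F)$ may also divide $|F|$, so the convenient ``nontrivial coprime action yields a non-invariant character of the module'' argument is not directly available. Second, the relevant nontrivial action may be invisible on $Z$ itself and live only on $F/Z$ (think of $F$ extraspecial with $K/F$ acting trivially on $Z=\zent F$), so restricting to $\irr Z$ can lose primes. To cope with this I would not work with $Z$ alone but with a suitable $G$-chief section of $F$, or with a nonlinear $\theta\in\irr F$ and its \emph{fully ramified} behaviour over $Z$ as recalled in the preliminaries: in the fully ramified case the relation $e^2=|F:Z|$ (or the analogous relation on the section on which $Q$ acts) forces the offending prime into the degree, while in the non-fully-ramified case $Q$ permutes the relevant characters nontrivially and forces $q\mid|G:G_\theta|$. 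Orchestrating the single choice of $\lambda$ or $\theta$ so that all primes of $K/F$ are captured together, while keeping the inductive hypothesis applicable to the inertia subgroup, is exactly where the argument ceases to be automatic; the identity $\cent G F = Z$ is the structural input that makes it go through, since it guarantees that no nontrivial $q$-section of $K/F$ can centralize the whole of $F$ and hence none of the required primes can disappear.
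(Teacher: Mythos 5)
The paper itself gives no proof of this statement: it is quoted from \cite[Proposition 17.3]{MW}, so your attempt has to be measured against the argument of that reference. As it stands, your proposal has a genuine gap --- in fact two. First, the inductive mechanism fails as set up: the inductive hypothesis applied to $T=I_G(\lambda)$ produces \emph{some} $\psi\in\irr T$ with $\pi(\fitt T/\fit T)\subseteq\pi(\psi(1))$, but it gives no control whatsoever on the constituents of $\psi_Z$, whereas the Clifford step $\chi=\psi^G$, $\chi(1)=|G:T|\,\psi(1)$, requires $\psi\in\irr{T\mid\lambda}$ for the \emph{prescribed} $\lambda$. A character lying over a prescribed $\lambda$ and simultaneously carrying the residual primes is exactly a relative strengthening of the lemma, which you neither formulate nor prove (note also that if $\lambda$ happens to be $G$-invariant, then $T=G$ and the induction does not even start). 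Second, you explicitly leave open the decisive point --- the choice of a single $\lambda$ (or $\theta$) capturing all primes of $K/F$ at once --- calling it ``exactly where the argument ceases to be automatic''; that is the heart of the proof, not a technical detail, so what you have is an outline with an acknowledged hole at its core.

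Both difficulties you worry about (non-coprime action, and action concentrated on $F/\zent F$ rather than on $\zent F$) evaporate after a reduction you do not make: pass to $G/\frat G$. Since $\fit{G/\frat G}=F/\frat G$, $\fitt{G/\frat G}=K/\frat G$ and $\irr{G/\frat G}\subseteq\irr G$, one may assume $\frat G=1$; then Gasch\"utz's theorem makes $F$ abelian with $\cent GF=F$, and $F$, hence its dual $\widehat F$, is a faithful completely reducible $G/F$-module, so that $F=\zent F$ and complete reducibility replaces coprimality. Now decompose $\widehat F=U_1\oplus\cdots\oplus U_k$ into irreducible $K/F$-submodules (the restriction to the normal subgroup $K/F$ is completely reducible by Clifford's theorem), pick a non-trivial $\lambda_j\in U_j$ for each $j$, and set $\lambda=\lambda_1\lambda_2\cdots\lambda_k$. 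For $q\in\pi(K/F)$, the Sylow $q$-subgroup $Q/F$ of the nilpotent group $K/F$ is normal in $K/F$ and acts non-trivially on some $U_j$ (faithfulness); the fixed points of a normal subgroup on an irreducible module form a submodule, so $\cent{U_j}{Q/F}=1$, whence $Q/F$ fixes neither $\lambda_j$ nor $\lambda$ (elements of $K/F$ preserve each $U_i$, so fixing $\lambda$ forces fixing every component). Since $K/F$ is nilpotent, $q$ therefore divides $|K/F:\cent{K/F}{\lambda}|=|K:K\cap I_G(\lambda)|$, which divides $|G:I_G(\lambda)|$ because $K\nor G$, and this index divides $\chi(1)$ for every $\chi\in\irr{G\mid\lambda}$. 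A single such $\chi$ proves the lemma, with no induction at all: this ``product of non-zero components'' device is precisely the orchestration your outline is missing.
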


As mentioned in the Introduction, we will make an intensive use of the classification, provided in~\cite{L}, of solvable groups whose character degree graph is disconnected. The next statement summarizes some aspects of that classification: the groups in (a), (b) and (c) are respectively those of types 2.1, 2.4 and 2.6 (described further in 3.1, 3.4 and 3.6, respectively) in \cite{L}.

\begin{theorem}\label{class} Let $G$ be a solvable group, and set \(F=\fit G\), \(K=\fitt G\). Assume that $\Delta (G)$ has two connected components. Then the following conclusions hold.
\begin{enumeratei}
\item Assume that $G$ is metanilpotent. Then \(G=PH\), where \(P\nor G\) is a non-abelian Sylow $p$-subgroup for a suitable prime \(p\), and \(H\) is an abelian $p$-complement. Moreover, $P'\leq\cent P H$, and every non-linear irreducible character of $P$ is fully ramified with respect to $P/\cent P H$. Finally, the sets of vertices of the two connected components of  \(\Delta(G)\) are respectively \(\{p\}\) and \(\pi(G/F)\). 
\item Assume that \(F\) is abelian, and that \(|\V G|>2\). Then \(G=MH\), where \(M\nor G\) is an elementary abelian \(p\)-group for a suitable prime \(p\), and \(H\) is a complement for \(M\). Also, \(F=M\times \zent G\), \(\zent G=\cent H M\) and $G/F \leq \Gamma(M)$. The subgroup \(K\) acts irreducibly (by conjugation) on \(M\), and both \(K/F\) and \(G/K\) are cyclic groups. Finally, the sets of vertices of the two connected components of  \(\Delta(G)\) are respectively \(\pi(K/F)\) and \(\pi(G/K)\).   
\item Assume that \(F\) is non-abelian and that, whenever \(\oh r G\) is non-abelian, the prime \(r\) is not an isolated vertex of \(\Delta(G)\). Then \(G=PH\), where \(P\nor G\) is a non-abelian Sylow $p$-subgroup for a suitable prime \(p\), and \(H\) is a $p$-complement. Also,  \(F=P\times U\) where \(U\leq\zent G\). The factor group \(G/P'\) is a group as in \((b)\), so, in particular, \(K/F\) and \(G/K\) are cyclic groups. Finally, the sets of vertices of the two connected components of  \(\Delta(G)\) are respectively \(\{p\}\cup\pi(K/F)\) and \(\pi(G/K)\). 
\end{enumeratei}
\end{theorem}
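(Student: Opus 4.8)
The plan is to derive all three statements from the full six-case classification of solvable groups with disconnected degree graph established in \cite{L}, showing that each of the three hypotheses singles out exactly one of the relevant cases (2.1, 2.4, 2.6) and then transcribing and slightly refining the structural information recorded there. Throughout I rely on Palfy's condition, which guarantees that $\Delta(G)$ has exactly two connected components, each inducing a complete subgraph; in each part the work is to pin down the group structure and to identify the vertex set of each component.

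For part (a), the hypothesis that $G$ is metanilpotent means that $G$ has Fitting height $2$, and among the cases of \cite{L} this is precisely the situation of type 2.1 (all the remaining cases force Fitting height $3$, hence are excluded). From the description in 3.1 I would extract that $G=PH$ with $P=\oh p G$ a normal non-abelian Sylow $p$-subgroup and $H$ an abelian $p$-complement, and that $P'\le\cent P H$. The full ramification of every non-linear $\theta\in\irr P$ with respect to $P/\cent P H$ is the heart of this case: restricting characters of $G$ to $P$ and combining Gallagher's theorem with the fully ramified structure, one sees that every degree divisible by $p$ is in fact a pure $p$-power, while the degrees carrying the primes of $\pi(G/F)$ arise from the $H$-orbits on the linear characters of $P$; this yields that $p$ is an isolated vertex and that the second component has vertex set $\pi(G/F)$. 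Turning this degree pattern into explicit divisibility statements is the one genuinely computational point here.

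For parts (b) and (c) I would argue in parallel. Under the hypothesis of (b) the Fitting subgroup $F$ is abelian; among the classification cases with abelian $F$, all but type 2.4 have at most two vertices, so the assumption $|\V G|>2$ leaves only type 2.4. This gives $G=MH$ with $M=\oh p G$ elementary abelian and self-centralizing modulo the center, whence $F=M\times\zent G$ and $\zent G=\cent H M$. The embedding $G/F\le\Gamma(M)$ and the irreducibility of the action of $K$ on $M$ come directly from 3.4, and the cyclicity of $K/F$ and $G/K$ then follows from the structure of $\Gamma(M)=\Gamma(p^n)$, whose field-multiplication subgroup $\Gamma_0(p^n)$ is cyclic and normal with cyclic Galois quotient: one identifies $K/F$ with the relevant subgroup of $\Gamma_0(p^n)$ and $G/K$ with its image in the Galois group, and reads off the vertex sets $\pi(K/F)$ and $\pi(G/K)$ from the two levels of degrees produced by this semilinear action. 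For part (c), the assumption that $F$ is non-abelian together with the requirement that a non-abelian $\oh r G$ never be isolated rules out the isolated-$p$ subcase and identifies type 2.6; here $G=PH$ with $P$ a normal non-abelian Sylow $p$-subgroup and $F=P\times U$, $U\le\zent G$. The decisive point is that $G/P'$ is a group of the type treated in (b), so the previous paragraph applies verbatim to $G/P'$ and supplies the cyclicity of $K/F$ and $G/K$ and the semilinear embedding; the only new feature is that, $P$ being non-abelian and non-isolated, the prime $p$ now attaches to $\pi(K/F)$ to form the first component $\{p\}\cup\pi(K/F)$.

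The main obstacle I anticipate is not the structural extraction, which is essentially bookkeeping on top of \cite{L}, but the precise matching of hypotheses to cases together with the clean identification of the two vertex sets. Concretely, I must verify that the three conditions are mutually exclusive and jointly select exactly the named types, excluding the auxiliary cases of the classification, and I must confirm --- by tracking which primes divide which irreducible degrees in each normal configuration --- that the two connected components carry exactly the advertised vertex sets. This last verification is where the fully ramified characters of part (a) and the semilinear degree pattern of part (b) have to be converted into explicit divisibility statements about $\cd G$.
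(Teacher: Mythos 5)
Your proposal takes essentially the same route as the paper: the paper does not prove Theorem~\ref{class} at all, but presents it as a summary of Lewis's classification in \cite{L}, identifying the groups in (a), (b) and (c) with those of types 2.1, 2.4 and 2.6 (characterized in 3.1, 3.4 and 3.6 of \cite{L}), which is exactly the case-matching you describe. The structural data and the identification of the vertex sets of the two components that you propose to extract are precisely the content of those lemmas in \cite{L}, so your plan coincides with the paper's citation-based treatment.
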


We stress that a group \(G\) as in (b) or (c) of Theorem~\ref{class}  is such that every Sylow subgroup of \(G/F\) is cyclic.

We also quote the following result, which is Theorem~5.5 of \cite{L}.

\begin{lemma}
\label{lewis} Let $G$ be a solvable group such that $\Delta(G)$ is a
disconnected graph. Then there exists a unique prime $p$ such
that $\oh p G$ is non-central in $G$.
\end{lemma}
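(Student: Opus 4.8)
The plan is to establish the two assertions separately: the \emph{existence} of a prime $p$ with $\oh p G \not\le \zent G$, which is elementary, and its \emph{uniqueness}, which is the substantial part. Throughout I use that, by Pálfy's condition \cite{PPP1}, a disconnected $\Delta(G)$ has exactly two connected components with vertex sets $\pi_1,\pi_2$, each inducing a complete subgraph; in particular the set of primes dividing any single degree in $\cs{}$ is contained in one $\pi_i$.

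For existence, recall that in a solvable group one has $\zent G \le \fit G$ and, crucially, $\cent G{\fit G}\le \fit G$. Suppose, for contradiction, that $\oh r G \le \zent G$ for every prime $r$. Since $\fit G=\prod_r \oh r G$, this forces $\fit G \le \zent G$, so $\fit G=\zent G$ is central; but then $\cent G{\fit G}=G$, whence $G=\fit G$ is nilpotent. A nilpotent group has a complete, hence connected, degree graph (degrees of the direct factors multiply), contradicting the disconnectedness of $\Delta(G)$. Thus at least one $\oh p G$ is non-central.

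For uniqueness I would argue by contradiction, assuming $P=\oh p G$ and $Q=\oh q G$ are both non-central with $p\ne q$, and try to produce a single irreducible character degree whose prime divisors meet \emph{both} components of $\Delta(G)$. First, refine a chief series of $G$ through $P$ and through $Q$ to obtain non-central chief factors $V=K_1/L_1\le P$ and $W=K_2/L_2\le Q$, irreducible modules over $\GF p$ and $\GF q$ respectively. Since $\fit G$ is the intersection of the centralizers of the chief factors of a chief series, $\fit G$ centralizes both $V$ and $W$; hence both actions factor through $\o G=G/\fit G$, and each is non-trivial because $V,W$ were chosen non-central. Choosing non-principal $\lambda\in\irr V$, $\nu\in\irr W$ with non-trivial $G$-orbits (possible, as $\o G$ acts non-trivially on the dual modules), Clifford theory shows that any $\chi\in\irr G$ over $\lambda$ has degree divisible by the orbit length $o_V=|G:I_G(\lambda)|>1$, a divisor of $|\o G|$, and likewise $o_W$ for $W$.

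The key observation is that $V$ and $W$ commute and have coprime orders, so $\lambda\times\nu$ is an irreducible character of the section $K_1K_2/L_1L_2\cong V\times W$ whose stabilizer is $I_G(\lambda)\cap I_G(\nu)$; as this intersection lies inside each of $I_G(\lambda)$ and $I_G(\nu)$, its index is divisible by $\mathrm{lcm}(o_V,o_W)$. Consequently an irreducible character of $G$ over $\lambda\times\nu$ has degree divisible both by a prime factor of $o_V$ and by a prime factor of $o_W$. It therefore suffices to exhibit $\lambda,\nu$ for which these two orbit lengths contribute primes lying in \emph{different} components. \textbf{This last step is where I expect the real difficulty to lie}: nothing seen so far prevents all the relevant orbit primes from collapsing into a single component, and ruling this out seems to require the finer description of the action of $\o G$ — in particular that $\o G$ embeds into a group of semi-linear transformations on a single non-central chief factor, as recorded in Theorem~\ref{class} — together with Lemma~\ref{brodkey}, which confines the primes of $\fitt G/\fit G$ to one component and thereby helps locate $\pi_1$ and $\pi_2$. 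I would expect the argument to culminate in showing that two distinct non-central Fitting primes force $\o G$ to be too large to preserve the partition $\rho(G)=\pi_1\cup\pi_2$, giving the desired contradiction.
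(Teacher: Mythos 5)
The first thing to note is that the paper does not prove this lemma at all: it is quoted verbatim as Theorem~5.5 of \cite{L}, and Lewis's proof of that theorem rests on his full classification of solvable groups with disconnected degree graph (the material summarized here as Theorem~\ref{class}). So the benchmark for a self-contained proof is essentially that classification machinery. Your existence argument is correct, but it is the trivial half of the statement: if every $\oh r G$ were central then $\fit G\le \zent G$ together with $\cent G{\fit G}\le\fit G$ forces $G=\fit G$ nilpotent, whose graph is complete, hence connected.

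The uniqueness half, which is the entire content of the lemma, has a genuine gap --- one you flag yourself, but it is worth saying why it is not merely a missing detail: the Clifford-theoretic setup you build \emph{cannot in principle} produce the contradiction. For any choice of $\lambda\in\irr V$, $\nu\in\irr W$, a character $\chi\in\irr{G\mid\lambda\times\nu}$ has degree divisible by ${\rm lcm}(o_V,o_W)$, and the only conclusion this yields is that all primes dividing $o_Vo_W$ are pairwise adjacent in $\Delta(G)$, i.e.\ they all lie in \emph{one} connected component. That conclusion is perfectly consistent with $\Delta(G)$ being disconnected, so no contradiction can follow unless one first knows that some prime of $o_V$ and some prime of $o_W$ are forced into different components --- and nothing in your construction (which never uses the partition $\rho(G)=\pi_1\cup\pi_2$ beyond restating it) provides this. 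Locating the orbit primes requires exactly the structural input you defer to: that $G/\fit G$ acts semi-linearly on a non-central chief factor, that $\pi(\fitt G/\fit G)$ is confined to one component (Lemma~\ref{brodkey}), and ultimately the case analysis of Theorem~\ref{class}. In other words, completing your sketch amounts to reproving Lewis's classification-based argument, which is precisely why the authors cite \cite[Theorem 5.5]{L} instead of proving the lemma. As it stands, the proposal proves only the easy existence assertion.
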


Next, another preliminary lemma.

\begin{lemma}
  \label{U}
Let $G$ be a group such that $\fit G = M \times Z$, with $Z \leq \zent G$  and $M \nor G$. 
 Assume also that every irreducible
character of $\fit G$ extends to its inertia subgroup. Then
$\Delta(G) = \Delta(G/Z)$.
\end{lemma}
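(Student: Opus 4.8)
The plan is to prove the stronger statement that $\cd G = \cd{G/Z}$, from which the equality of the prime graphs is immediate (the graph is entirely determined by the degree set). Since $G/Z$ is an epimorphic image of $G$ we automatically have $\cd{G/Z} \subseteq \cd G$, so the whole content lies in the reverse inclusion: every degree of an irreducible character of $G$ must already occur as a degree of $G/Z$. Write $F = \fit G = M \times Z$, fix $\chi \in \irr G$, and let $\lambda \in \irr F$ lie under $\chi$. Because $F$ is a direct product we may factor $\lambda = \mu \times \nu$ with $\mu \in \irr M$ and $\nu \in \irr Z$; as $Z \le \zent G$ the character $\nu$ is linear and $G$-invariant.

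The first key point I would establish is that the relevant inertia groups coincide. Since $\nu$ is $G$-invariant, $T := I_G(\lambda) = I_G(\mu \times 1_Z) = I_G(\mu)$, and $Z \le T$. Now I invoke the hypothesis twice. On one hand $\lambda$ extends to some $\hat\lambda \in \irr T$; on the other hand $\mu \times 1_Z \in \irr F$ also extends, to some $\eta \in \irr T$. The crucial observation is that $\eta$ is automatically trivial on $Z$: restricting the defining equality $\eta_F = \mu \times 1_Z$ to $Z$ gives $\eta_Z = \mu(1)\cdot 1_Z = \eta(1)\cdot 1_Z$, so $Z \le \ker\eta$ and hence $\eta$ may be read as an irreducible character of $T/Z$ extending the character of $F/Z \cong M$ corresponding to $\mu$.

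With these extensions in hand, the degree bookkeeping matches on both sides. By Clifford correspondence together with Gallagher's theorem, the degrees of the characters in $\irr{G|\lambda}$ are exactly $|G:T|\,\mu(1)\,\beta(1)$ as $\beta$ ranges over $\irr{T/F}$ (using $\lambda(1) = \mu(1)$ since $\nu$ is linear); in particular $\chi(1)$ has this shape. Performing the identical computation inside $G/Z$ relative to the normal subgroup $F/Z \cong M$ and the extension $\eta$, and using $|G:T| = |(G/Z):(T/Z)|$, shows that the degrees in $\irr{(G/Z)|\mu}$ form precisely the same set $|G:T|\,\mu(1)\,\beta(1)$, $\beta \in \irr{T/F}$. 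Hence $\chi(1) \in \cd{G/Z}$, giving $\cd G = \cd{G/Z}$ and therefore $\Delta(G) = \Delta(G/Z)$.

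I expect the main obstacle to be the bookkeeping of the second paragraph rather than any deep difficulty: one must check that the two inertia groups genuinely coincide and, above all, that the extension of $\mu \times 1_Z$ descends to $G/Z$ (i.e.\ is trivial on $Z$), since it is exactly this fact that lets Gallagher's theorem produce matching degree sets upstairs and downstairs. Everything else is a routine application of Clifford theory once the extensions are supplied by the hypothesis.
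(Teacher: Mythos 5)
Your proof is correct: the identification $I_G(\lambda)=I_G(\mu\times 1_Z)=T$, the observation that the extension $\eta$ of $\mu\times 1_Z$ to $T$ satisfies $Z\le\ker\eta$ and hence descends to $T/Z$, and the two matched Clifford--Gallagher computations (over $F$ in $G$, and over $F/Z\cong M$ in $G/Z$) all check out, and together they do yield $\cd G=\cd{G/Z}$. However, your route is genuinely different from the paper's, which works with the normal subgroup $Z$ rather than $F$: for $\theta\in\irr Z$ under $\chi$, the character $\theta\times 1_M\in\irr F$ has inertia group all of $G$ (both factors are $G$-invariant), so the hypothesis immediately hands an extension $\xi\in\irr G$, which is linear and restricts to $\theta$ on $Z$; a single application of Gallagher's theorem with respect to $Z\nor G$ then writes $\chi=\xi\psi$ with $\psi\in\irr{G/Z}$, whence $\chi(1)=\psi(1)\in\cd{G/Z}$. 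The paper's argument thus needs no inertia subgroups and no Clifford correspondence, and in fact it only invokes the extension hypothesis for the special characters $\theta\times 1_M$ (those trivial on $M$), so it gets by with a formally weaker assumption; your argument invokes the hypothesis twice (for $\lambda$ and for $\mu\times 1_Z$) and runs the full Clifford machinery on both sides, but in exchange it produces slightly finer information, namely a fiber-by-fiber matching of the degree sets of $\irr{G|\lambda}$ and $\irr{G/Z\,|\,\bar\mu}$ over each constituent $\mu\in\irr M$.
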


\begin{proof} Observe first that, by our assumptions,  every irreducible character of \(Z\) has an extension to \(G\): in fact, if \(\theta\) is in \(\widehat{Z}\), then \(\theta\times 1_M\in\irr{\fit G}\) extends to \( G=I_G(\theta\times 1_M)\). Now, let \(d\) be a number in \(\cd G\), \(\chi\) an irreducible character of \(G\) of degree \(d\), and  \(\theta\) an irreducible constituent of \(\chi_Z\);  denoting by \(\xi\) an extension of \(\theta\) to \(G\), by Gallagher's Theorem there exists \(\psi\in\irr{G/Z}\) such that \(\chi=\xi\psi\). As a consequence, \(d=\chi(1)=\psi(1)\in\cd{G/Z}\), and the desired conclusion follows.
\end{proof}

Finally, the following result  by C.P. Morresi Zuccari (\cite[Corollary C]{Z})  will also be relevant for our purposes.

\begin{theorem} Let $G$ be a solvable group such that $\Delta (G)$ is connected.  If $\fit G$ is abelian, then $\diam{\Delta(G)}\leq 2$.
  \label{Zuccari}

\end{theorem}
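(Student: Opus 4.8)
The plan is to argue by contradiction, exploiting that an abelian Fitting subgroup is self-centralizing. By P\'alfy's three-primes condition we already know $\diam{\Delta(G)}\le 3$, so I assume $\diam{\Delta(G)}=3$ and fix vertices $r,s$ with $d_G(r,s)=3$. Writing $\pi_1$ (resp. $\pi_2$) for $\{r\}$ (resp. $\{s\}$) together with its neighbours, the distance-$3$ hypothesis gives $\pi_1\cap\pi_2=\emptyset$ (a common neighbour would yield a path of length $2$), while P\'alfy's condition forces every other vertex to be adjacent to $r$ or to $s$; hence $\V G=\pi_1\sqcup\pi_2$ with $r\in\pi_1$, $s\in\pi_2$ and $r\not\sim s$. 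First I would record two consequences of $F=\fit G$ being abelian: since $\cent G{\fit G}\le\fit G$ in a solvable group, $F$ is self-centralizing, so $G/F$ acts faithfully on $F$ (and on $\widehat F$); and, by It\^o--Michler, a prime lies in $\V G$ exactly when it divides $|G/F|$, that is, $\V G=\pi(G/F)$. Thus $\pi_1,\pi_2$ are two non-empty sets of primes dividing $|G/F|$.

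The second step is to read the degrees through Clifford theory over the abelian group $F$. For $\lambda\in\irr F$ (necessarily linear), put $I=I_G(\lambda)$; every $\chi\in\irr{G|\lambda}$ has degree $\chi(1)=|G:I|\,\psi(1)$ for some $\psi\in\irr{I|\lambda}$, with $\psi(1)$ dividing $|I:F|$. Consequently each vertex is either an \emph{orbit prime}, dividing the length $|G:I_G(\lambda)|$ of some $G$-orbit on $\widehat F$, or an \emph{internal prime}, dividing some $\psi(1)$ --- and the latter forces $\lambda$ to be (partly) fully ramified in $I$. I would first treat the orbit primes: adjacency of $p,q$ amounts to $G$ having an orbit on $\widehat F$ of length divisible by $pq$, and a common neighbour amounts to a prime $t$ together with orbits of lengths divisible by $pt$ and by $qt$. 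Using that $G/F$ is a faithful solvable linear group on $\widehat F$, one analyses its orbit-length structure (via Hall subgroups and the action on $F$) to show that the partition $\pi_1\sqcup\pi_2$ cannot separate two orbit primes lying at distance $3$.

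The crux is then to merge the internal primes into this picture and to locate a quotient where the graph visibly disconnects. Concretely, I would aim to show that the distance-$3$ partition forces a $G$-invariant decomposition $\widehat F=\widehat{F_1}\times\widehat{F_2}$ on which the Hall $\pi_1$- and Hall $\pi_2$-subgroups of $G/F$ act while centralizing the complementary factor; passing to a suitable quotient $G/N$ (with $N\le F$ a $G$-invariant complement) would then make $\Delta(G/N)$ disconnected, so that Theorem~\ref{class} applies. Since $\fit{(G/N)}$ is again abelian, we land in case~(b): there $(G/N)/\fit{(G/N)}$ embeds in some $\Gamma(\widehat{F_i})$ and has all Sylow subgroups cyclic, and the two components are $\pi(\fitt{(G/N)}/\fit{(G/N)})$ and $\pi((G/N)/\fitt{(G/N)})$. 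This rigid structure, pulled back to $G$, is incompatible with the existence of a genuine distance-$3$ pair, which is the desired contradiction.

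The main obstacle I anticipate is precisely the internal primes: the degrees $\psi(1)$ do not arise from the permutation action of $G/F$ on $\widehat F$, so they are invisible to the orbit-length analysis and can a priori create both vertices and edges that disrupt the partition. Controlling them requires the character-triple machinery attached to a linear, $I$-invariant $\lambda$ --- in particular the symplectic module carried by a fully ramified section, on which $I/F$ acts and which forces certain primes to be mutually adjacent. Reconciling these forced adjacencies with the clean separation $\pi_1\sqcup\pi_2$ coming from $d_G(r,s)=3$ is, I expect, the technical heart of the argument.
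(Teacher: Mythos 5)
A preliminary remark: the paper contains no proof of this statement for yours to be compared with --- Theorem~\ref{Zuccari} is imported verbatim as Corollary~C of Morresi Zuccari's paper \cite{Z}. Your attempt must therefore stand on its own, and as written it is a strategy outline rather than a proof. The preparatory reductions are correct: P\'alfy's three-primes condition does give the partition $\V G=\pi_1\sqcup\pi_2$; an abelian $F=\fit G$ is self-centralizing, so $G/F$ acts faithfully on $F$ and on $\widehat F$; Ito--Michler does give $\V G=\pi(G/F)$; and Clifford theory over the abelian $F$ writes every degree as $|G:I_G(\lambda)|\,\psi(1)$ with $\psi(1)$ dividing $|I_G(\lambda):F|$. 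But every step after this setup is announced (``one analyses\dots'', ``I would aim to show\dots'') rather than argued, and those are exactly the steps that carry the content.

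Three concrete gaps. First, the claim that an orbit-length analysis rules out two orbit primes at distance $3$ names no mechanism: nothing in the proposal produces an orbit on $\widehat F$ whose length is divisible by primes on both sides of the partition, which is what adjacency would require. Second, the pivotal assertion that a distance-$3$ pair forces a $G$-invariant decomposition $\widehat F=\widehat{F_1}\times\widehat{F_2}$, with Hall $\pi_i$-subgroups centralizing the complementary factors, is derived from nothing, and it cannot be the general mechanism: if $F$ is elementary abelian and $G/F$ acts irreducibly on it (a situation your hypotheses do not exclude), then $F$ has no proper nontrivial $G$-invariant subgroups at all, so no such decomposition and no quotient $G/N$ with $1<N<F$ exists; your route to a disconnected quotient then has nowhere to go, and proving that a distance-$3$ pair forces reducibility is a statement of essentially the same depth as the theorem itself. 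Third, even granted a disconnected quotient falling under case (b) of Theorem~\ref{class}, the concluding sentence that this structure ``pulled back to $G$ is incompatible with a genuine distance-$3$ pair'' is not an argument, and it is not automatic: in the non-abelian Fitting case, Theorem~A of this very paper shows that diameter-$3$ groups \emph{do} possess quotients with disconnected graph, namely $\Delta(G/\gamma_3(P))$, so disconnection of a quotient contradicts nothing by itself; whatever incompatibility exists must be extracted from the abelian-Fitting structure, and no such extraction is attempted. Add to this that you explicitly defer the fully ramified (``internal prime'') case as ``the technical heart'', and the verdict is that the proposal assembles sensible tools but proves none of the assertions on which the conclusion rests.
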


\section{Some proofs}

We start with two lemmas concerning modules over finite fields for cyclic groups.

\begin{lemma} Let \(G\) be a cyclic group, \(\K\) a finite field of order $q$, and \(M\) a faithful irreducible $m$-dimensional \(\K [G]\)-module. Also, let \(\epsilon\) be an element of order $|G|$ in the multiplicative group of $\F={\rm{GF}}(q^m)$. Then the following hold.

\begin{enumeratei}
\item If \(M\) is a constituent of \(M\otimes_{\K} M\), then there exist \(\sigma_1\) and \(\sigma_2\) in \({\rm{Gal}}(\F\,|\,\K)\) such that \(\epsilon^{\sigma_1}\cdot\epsilon^{\sigma_2}=\epsilon\).
\item If \(M\) is self-contragredient, then there exists \(\sigma\) in \({\rm{Gal}}(\F\,|\,\K)\) such that \(\epsilon^{\sigma}=\epsilon^{-1}\).
\end{enumeratei}
\label{tensor}
\end{lemma}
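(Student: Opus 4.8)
The plan is to translate both conditions into statements about the single eigenvalue $\epsilon$ after extending scalars to $\F$, where everything splits into one-dimensional pieces indexed by ${\rm Gal}(\F\,|\,\K)$. First I would pin down the module structure. Since $G$ is cyclic, $\K[G]$ is commutative, so the irreducibility of $M$ makes $\mathbb{E}=\K[G]/{\rm Ann}(M)$ a field with $M\cong\mathbb{E}$ one-dimensional over it; comparing $\K$-dimensions gives $\mathbb{E}\cong\F$. Thus $M$ is identified with $\F$, a generator $g$ of $G$ acting as multiplication by its image $\bar g\in\F^\times$, which generates $\F$ over $\K$ (because $\K[\bar g]=\mathbb{E}$) and has order $|G|$ (by faithfulness). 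As every element of order $|G|$ in the cyclic group $\F^\times$ is a power of $\bar g$ by an integer coprime to $|G|$, after replacing $g$ by a suitable generator I may assume $\bar g=\epsilon$. In particular $\K(\epsilon)=\F$, so $\epsilon$ has $m$ distinct conjugates $\epsilon,\epsilon^{q},\dots,\epsilon^{q^{m-1}}$, on which ${\rm Gal}(\F\,|\,\K)=\{x\mapsto x^{q^{i}}\}$ acts simply transitively; moreover $|G|$ divides $q^m-1$, hence is prime to ${\rm char}\,\K$, so $\K[G]$ and $\F[G]$ are semisimple and ``constituent'' coincides with ``direct summand''.

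Next I would extend scalars. Writing $W_\lambda$ for the one-dimensional $\F[G]$-module on which $g$ acts as $\lambda$, the above gives $M\otimes_\K\F\cong\bigoplus_{i=0}^{m-1}W_{\epsilon^{q^i}}$, with pairwise non-isomorphic summands. Using that tensor products commute with base change in a $G$-equivariant way (for the diagonal action), and that taking the contragredient likewise commutes with base change, I obtain $(M\otimes_\K M)\otimes_\K\F\cong\bigoplus_{i,j}W_{\epsilon^{q^i}\epsilon^{q^j}}$ and $M^{*}\otimes_\K\F\cong\bigoplus_{i}W_{\epsilon^{-q^i}}$.

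Finally I would match constituents. For (a), the hypothesis makes $M\otimes_\K\F$ a direct summand of $(M\otimes_\K M)\otimes_\K\F$, so its summand $W_\epsilon$ appears among the $W_{\epsilon^{q^i}\epsilon^{q^j}}$; comparing the scalar by which $g$ acts yields $\epsilon^{q^i}\epsilon^{q^j}=\epsilon$, and $\sigma_1\colon x\mapsto x^{q^i}$, $\sigma_2\colon x\mapsto x^{q^j}$ do the job. For (b), $M\cong M^{*}$ gives $\bigoplus_iW_{\epsilon^{q^i}}\cong\bigoplus_iW_{\epsilon^{-q^i}}$, so the two multisets of $g$-eigenvalues coincide; since $\epsilon^{-1}$ occurs on the right it must occur on the left, i.e.\ $\epsilon^{-1}=\epsilon^{q^i}$ for some $i$, and $\sigma\colon x\mapsto x^{q^i}$ works.

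I expect the only delicate points to be the structural reduction in the first step and the justification that the constituent relation survives base change, the latter being exactly where the coprimality of $|G|$ to ${\rm char}\,\K$ (hence semisimplicity, so that direct-sum decompositions are preserved and their summands are unique) is used. Once these are in place, the eigenvalue bookkeeping in the remaining steps is entirely routine.
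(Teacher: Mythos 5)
Your proof is correct and follows essentially the same route as the paper's: identify $M$ with $\F$ so that a suitable generator of $G$ acts as multiplication by $\epsilon$, extend scalars to $\F$ to split $M$ into the Galois twists of a one-dimensional module, and compare the eigenvalues of the generator on matching summands. The only difference is presentational: where you re-derive the structural facts by hand (the field $\K[G]/{\rm Ann}(M)\cong\F$, the decomposition $M\otimes_\K\F\cong\bigoplus_\sigma (M_\F)^\sigma$, and the semisimplicity ensuring uniqueness of summands), the paper simply cites \cite[II.3.10]{H} and \cite[VII, 1.16 a)]{HB}.
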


\begin{proof} Observe that \(\F\) is a splitting field for \(G\) over \(\K\). By~\cite[II.3.10]{H}, we can identify $M$ with the additive group of $\F$, and the action of a
 suitable generator \(x\) of $G$ with the multiplication by $\epsilon$. We denote by  \(M_{\F}\)
 the $1$-dimensional \(\F [G]\)-module arising in this way. Setting \(M^{\F}=M\otimes_{\K}\F\), by \cite[VII, 1.16 a)]{HB} we get \[M^{\F}=\bigoplus_{\sigma\in{\rm{Gal}}(\F\,|\,\K)}(M_{\F})^{\sigma}.\]

Now, if \(M\) is a constituent of \(M\otimes_{\K} M\), then
\(M^{\F}\) is a direct summand (as an \(\F [G]\)-module) of \[(M\otimes_{\K} M)^{\F}\simeq
M^{\F}\otimes_{\F} M^{\F}\simeq
\bigoplus_{\sigma_1,\sigma_2\in{\rm{Gal}}(\F\,|\,\K)}(M_{\F})^{\sigma_1}\otimes_{\F}(M_{\F})^{\sigma_2}.\]

\noindent In particular, there exist \(\sigma_1\) and \(\sigma_2\)
in \({\rm{Gal}}(\F\,|\,\K)\) such that
\(M_{\F}\simeq(M_{\F})^{\sigma_1}\otimes_{\F}(M_{\F})^{\sigma_2}\).
Considering now the action of \(x\) on these two isomorphic \(\F
[G]\)-modules, claim (a) follows.

As for (b), if \(M\) is \(\K [G]\)-isomorphic to its contragredient
module \(M^*\), then we get
\[\bigoplus_{\sigma\in{\rm{Gal}}(\F\,|\,\K)}(M_{\F})^{\sigma}\simeq
M^{\F}\simeq
(M^*)^{\F}\simeq\bigoplus_{\sigma\in{\rm{Gal}}(\F\,|\,\K)}((M^*)_{\F})^{\sigma}.\]
In particular,  there exists \(\sigma \) in
\({\rm{Gal}}(\F\,|\,\K)\) such that \((M_{\F})^{\sigma}\) is \(\F
[G]\)-isomorphic to \((M^*)_{\F}\), which is in turn \(\F
[G]\)-isomorphic to \((M_{\F})^*\). Claim (b) is now achieved by
comparing the action of \(x\) on the two relevant \(\F [G]\)-modules.
\end{proof}

\begin{lemma} Let \(G\) be a cyclic group, \(p\) a prime, and \(M\) a
 faithful irreducible \({\rm{GF}}(p)[G]\)-module. Setting \(|M|=p^m\), assume that there exists a divisor \(r\) of
\(m\), $1 \leq r < m$,  such that \(\frac{p^m-1}{p^r-1}\) divides \(|G|\). Then the following hold.

\begin{enumeratei}
\item If \(M\) is a constituent of \(M\wedge_{{\rm{GF}}(p)} M\), then \((p,m/r)=(2,2)\).
\item If \(M\) is self-contragredient, then \((|G|,m/r)=(p^r+1,2)\).
\end{enumeratei}
\label{modules}
\end{lemma}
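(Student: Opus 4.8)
The plan is to reduce both statements to the preceding Lemma~\ref{tensor}, applied with \(\K={\rm{GF}}(p)\) (so \(q=p\) and \(\F={\rm{GF}}(p^m)\)), and then to convert the resulting relations among the Galois conjugates of \(\epsilon\) into an elementary statement about base-\(p\) digits. Since \({\rm{Gal}}(\F\,|\,\K)\) is cyclic, generated by the Frobenius \(y\mapsto y^p\), every \(\sigma\in{\rm{Gal}}(\F\,|\,\K)\) acts on an element \(\epsilon\) of order \(d:=|G|\) in \(\F^{\times}\) by \(\epsilon^{\sigma}=\epsilon^{p^a}\) for some \(0\le a\le m-1\) (note that \({\rm{GF}}(p)[G]\)-modules are semisimple, as \(\gcd(d,p)=1\), so constituents behave well under extension of scalars to \(\F\)). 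Throughout I write \(N:=\frac{p^m-1}{p^r-1}=1+p^r+\cdots+p^{(m/r-1)r}\), and recall that \(N\mid d\mid p^m-1\).

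For (a), I would first observe that \(M\wedge M\) is a subquotient of \(M\otimes_{\K}M\), and that after extending scalars the decomposition \(M^{\F}=\bigoplus_{\sigma}(M_{\F})^{\sigma}\) of Lemma~\ref{tensor} yields \((M\wedge M)^{\F}=\bigoplus_{a<b}(M_{\F})^{\sigma_a}\otimes_{\F}(M_{\F})^{\sigma_b}\), where \(\sigma_a\) denotes the \(a\)-th power of Frobenius. Hence \(M\) being a constituent of \(M\wedge M\) refines Lemma~\ref{tensor}(a): it forces \(\epsilon^{p^a+p^b}=\epsilon\) for some \emph{distinct} \(a,b\), say \(a<b\). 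One checks \(a\ge 1\) (otherwise \(\epsilon^{p^b}=1\), impossible since \(\gcd(d,p)=1<d\)), so \(1\le a<b\le m-1\). Then \(d\mid p^a+p^b-1\), so \(N\mid p^a+p^b-1\), and from \(0<p^a+p^b-1<p^m\) one gets \(p^a+p^b-1=jN\) with \(1\le j\le p^r-1\).

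The combinatorial heart is a comparison of base-\(p\) expansions. Because \(j<p^r\), no carries occur in \(jN=\sum_i jp^{ir}\), so its digit string is \(r\)-periodic, namely the digits of \(j\) repeated \(m/r\) times. On the other hand \(p^a+p^b-1\) has digit \(p-1\) in positions \(0,\dots,a-1\) and digit \(1\) in position \(b\). If \(p>2\), the isolated digit \(1\) cannot recur periodically, forcing \(m/r=1\), a contradiction; hence \(p=2\). For \(p=2\) one shows \(a<r\) (otherwise block \(0\) is entirely ones and periodicity makes the whole pattern ones, contradicting the zero in position \(a\)), so the pattern of block \(0\) contains \(\{0,\dots,a-1\}\); periodicity then produces at least \((m/r)\cdot a\) ones, while \(2^a+2^b-1\) has only \(a+1\) ones, and \((m/r)\,a\le a+1\) forces \(m/r=2\). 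This gives \((p,m/r)=(2,2)\). For (b), Lemma~\ref{tensor}(b) gives \(\epsilon^{p^a}=\epsilon^{-1}\), i.e. \(d\mid p^a+1\), whence \(N\mid p^a+1\) and \(p^a+1=jN\) with \(1\le j\le p^r-1\); here \(p^a+1\) has exactly two nonzero base-\(p\) digits, in positions \(0\) and \(a\), with \(a\ge 1\) because \(p^a+1\ge N\). The digit \(1\) in position \(0\) must recur at \(0,r,\dots,(m/r-1)r\), so having only two nonzero digits leaves room just for \(m/r=2\); matching the two patterns then gives \(a=r\) and \(j=1\), so \(d=N=p^r+1\), that is \((|G|,m/r)=(p^r+1,2)\).

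The reduction to Lemma~\ref{tensor} is routine, and I expect the main obstacle to be the carry-free base-\(p\) bookkeeping: one must handle the borrow appearing in \(p^a+p^b-1\) carefully and keep the case split \(p=2\) versus \(p>2\) clean, since it is precisely the coincidence \(p-1=1\) in characteristic \(2\) that destroys the ``isolated digit'' argument and makes the counting step necessary.
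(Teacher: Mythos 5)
Your proof is correct. It shares the paper's point of departure---extend scalars to \(\F=\GF{p^m}\), decompose \(M^{\F}\) into Frobenius-conjugate lines, and convert the module hypothesis into a divisibility of \(p^a+p^b-1\) (resp.\ \(p^a+1\)) by \(N=\frac{p^m-1}{p^r-1}\)---but the way you exploit that divisibility is genuinely different. First, you extract the refinement \(a\neq b\) at the outset from \((M\wedge_{\K}M)^{\F}\simeq\bigoplus_{i<j}(M_{\F})^{\sigma_i}\otimes_{\F}(M_{\F})^{\sigma_j}\), whereas the paper starts from the weaker conclusion of Lemma~\ref{tensor}(a) (allowing \(a=b\)) and appeals to the wedge decomposition only at the very end of its \(m/r\geq 3\) analysis, precisely to exclude the diagonal solution \(p^a=p^b=2^{m-1}\); accordingly your case split is on \(p>2\) versus \(p=2\), while the paper's is on \(m/r\geq 3\) versus \(m/r=2\). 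Second, your arithmetic core is base-\(p\) digit bookkeeping---\(jN\) with \(j<p^r\) is carry-free, hence \(r\)-periodic, compared against the pattern of \(p^a+p^b-1\) (digits \(p-1\) in positions \(0,\dots,a-1\), an isolated \(1\) in position \(b\))---while the paper runs chains of divisibilities with size estimates (e.g.\ \(N\mid p^{\ell}(p^n-1)-1\)); for part (b) the paper additionally invokes that \(m\) is the multiplicative order of \(p\) modulo \(|G|\) to obtain \(m=2k\), a step your digit count renders unnecessary. What each approach buys: yours is more elementary and handles (a) and (b) by one uniform periodicity principle, at the cost of having to police carries and borrows, which you do---the checks \(a\geq 1\), \(a<r\), and the count \((m/r)a\leq a+1\) are exactly the needed safeguards; the paper's divisibility chains avoid digit analysis altogether, but its treatment of the \(m/r\geq 3\) case is correspondingly more delicate.
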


\begin{proof}
Let us first prove Claim (a). If \(M\) is a constituent of
\(M\wedge_{{\rm{GF}}(p)} M\), then it is clearly a constituent of
\(M\otimes_{{\rm{GF}}(p)} M\) as well. Therefore, by
Lemma~\ref{tensor}(a), there exist \(a, b\in\{0,...,m-1\}\) (say
\(a\geq b\)) such that \(p^a+p^b\equiv 1\;({\rm mod}\;|G|)\); in
particular, setting \(t=\frac{p^m-1}{p^r-1}\), we have that \(t\)
divides \(p^a+p^b-1\).

Since we have \(t>p^{m-r}\), we also have \(a\geq m-r\); in fact, assuming the contrary, we would get \(p^b\leq p^a\leq p^{m-r-1}\), thus \(p^a+p^b-1\leq 2p^{m-r-1}-1<p^{m-r}<t\), contradicting the fact that \(t\) divides \(p^a+p^b-1\). Therefore
we can write \(a=m-n\) where \(n\) lies in \(\{1,..., r\}\). Now,
\(t\) is a divisor of \((p^{m-n}+p^b-1)\cdot
p^n-(p^m-1)=p^{b+n}-p^n+1\), whence \(m-r\leq b+n\leq a+n=m\).
Again, defining \(\ell=m-(b+n)\), we get \(0\leq\ell\leq r\) and
\(t\) divides \((p^{b+n}-p^n+1)\cdot
p^{\ell}-(p^m-1)=p^{\ell}-p^{n+\ell}+1\). As a result, \(t\) is a
divisor of \(p^{\ell}\cdot(p^n-1)-1\).

On the other hand, if \(m/r\geq 3\), then \(m-r\geq 2r\), and so
\(n+\ell\leq 2r\leq m-r\). This implies \(p^{n+\ell}-p^{\ell}<
p^{m-r}< t\), and now the only possibility is
\[p^{\ell}\cdot(p^n-1)=1,\] which yields \(\ell=0\), \(p=2\),
\(n=1\). So, \(p^a=p^b=2^{m-1}\). Setting \(\F=\GF{2^m}\) and denoting by \(\sigma\) the element
of \({\rm{Gal}}(\F\,|\,{\rm{GF}}(2))\) which maps every \(f\in\F\)
to \(f^{2^{m-1}}\), the conclusion so far is that \((M_{\F})^{\sigma}\otimes_{\F} (M_{\F})^{\sigma}\) is the unique constituent of \(M^{\F}\otimes_{\F}M^{\F}\) which is isomorphic to \(M_{\F}\). But \((M_{\F})^{\sigma}\otimes_{\F} (M_{\F})^{\sigma}\) is not a
constituent of \(M^{\F}\wedge_{\F} M^{\F}\simeq (M\wedge_{\GF{2}} M)^{\F}\), against the fact that \(M\) is a constituent of \(M\wedge_{\GF{2}} M\).

It remains to treat the case \(m/r=2\), whence \(t=p^r+1\) divides
\(p^a+p^b-1\) with \(0< a\leq 2r-1\). Note that we must have \(a\geq
r\), so we can write \(a=r+n\) with \(0\leq n<r\). Now, \(t\)
divides \(p^{n+r}+p^b-1-p^n\cdot(p^r+1)=-p^n+p^b-1\). In particular,
\(t\leq |-p^n+p^b-1|<p^n+p^b-1\). This in turn implies \(b\geq r\),
therefore we write \(b=r+k\) where \(0\leq k<r\). Finally, \(p^r+1\)
divides \(-p^n+p^{r+k}-1-p^k\cdot(p^r+1)=-p^n-p^k-1\), whence
\(p^r+1\leq p^n+p^k+1\leq 2p^{r-1}+1\). It follows that \(p=2\), and
(a) is proved.

\smallskip
We move now to Claim (b). If \(M\) is self-contragredient, then
Lemma~\ref{tensor}(b) yields that there exists \(k\in\{0,...,m-1\}\)
such that \({p^k}\equiv -1\;({\rm{mod}}\;|G|)\). Therefore,
\(\frac{p^m-1}{p^r-1}\) divides \(p^k+1\). Let us first exclude the
possibility \(k=0\); in that case, in fact, we would have \(|G|=2\)
and \(m=1\), contradicting the existence of a proper positive
divisor of \(m\). Observe also that, as by~\cite[II.3.10]{H} $m$ is the smallest
positive integer such that $p^m \equiv 1 \pmod{|G|}$,  for
every integer \(z\neq 0\) such that \(|G|\) divides \(p^z-1\) we get
\(m\mid z\), so \(m\) divides \(2k\); on the other hand, since \(0<
k\leq m-1\), we have in fact \(m=2k\).

Our conclusion so far is that \(\frac{p^{2k}-1}{p^r-1}\) is a
divisor of \(p^k+1\); this yields \(p^k-1\mid p^r-1\), which in turn
implies \(k\mid r\). But since \(r\) properly divides \(2k\), the
only possibility is \(r=k\), i.e., \(m/r=2\). Moreover, \(|G|\) is
divisible by \(\frac{p^m-1}{p^r-1}=p^k+1\) and \(|G|\)  divides
\(p^k+1\), so we have in fact \(|G|=p^k+1\), as desired.
\end{proof}

\begin{rem}
\label{fullyramified}
Let \(P\) be a \(p\)-group, and \(N\) a subgroup of \(P\) such that \(P'\leq N\leq\zent P\). For \(\lambda\in\widehat{N}\), set \(Z_\lambda/\ker\lambda=\zent{P/\ker\lambda}\), and observe that \(Z_{\lambda}=\zent\theta\) for every \(\theta\in\irr{P|\lambda}\). In fact, \([\zent\theta,P]\leq N\cap\ker{\theta}=\ker\lambda\) (recall that, \(N\) being central in \(P\), \(\theta_N\) is a multiple of \(\lambda\)), hence \(\zent\theta\leq Z_{\lambda}\); but indeed equality holds, as clearly \([Z_{\lambda},P]\leq\ker\lambda\leq\ker\theta\).

Note also that, if \(\mu\) lies in \(\irr{Z_\lambda|\lambda}\), then \(\mu\) is a character of \(\zent{P/\ker\lambda}\); therefore, for \(\theta\in\irr{P|\mu}\), we have that \(\theta_{Z_\lambda}\) is a multiple of \(\mu\) (and in fact \(\mu\) is fully ramified in \(P\), because \(N\leq\zent\theta\), thus \(P/\zent\theta\) is abelian and \cite[Theorem 2.31]{I} yields \(\theta(1)^2=|P:\zent\theta|=|P:Z_\lambda|\)). Now, taking into account that \(\theta\in\irr{P|\lambda}\) certainly does not vanish on any element of \(\zent\theta=Z_\lambda\), it is easy to see that \(\lambda\in\widehat{N}\) is fully ramified in \(P\) if and only if \(Z_\lambda=N\).
\end{rem}

The proof of next Lemma uses ideas from the proofs of~\cite[Satz
1]{B} and~\cite[Satz~1]{He}.
\begin{lemma}
  \label{f.r.}
Let $P$ be a $p$-group, and \(N\) an elementary abelian subgroup of \(P\) such that \(\frat P\leq N\leq\zent P\). Write $|P/N| = p^n$ and $|N| = p^m$. Assume  $m > n/2$.
Then there are at least $p^{m - \lfloor n/2 \rfloor}$ characters
$\lambda \in \widehat{N}$ such that $\lambda$ is not fully ramified with respect to
$P/N$.
\end{lemma}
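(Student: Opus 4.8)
The plan is to reduce the statement to a counting problem about alternating bilinear forms and then invoke a classical estimate on the number of zeros of a low-degree polynomial over a finite field. First I would set up the linear-algebraic dictionary. Since \(\frat P\le N\), the quotient \(V=P/N\) is elementary abelian, hence a vector space over \(\F=\GF p\) of dimension \(n\); likewise \(N\) is an \(\F\)-space of dimension \(m\), and \(\widehat N\) may be identified with \(\mathrm{Hom}_\F(N,\F)\) (after fixing an isomorphism \(\mu_p\cong\F\)). Because \(N\le\zent P\) and \(P'\le\frat P\le N\), the commutator map \((x,y)\mapsto[x,y]\) takes values in \(P'\le N\), is unchanged when either argument is translated by \(N\), and is bi-additive and alternating; it therefore induces an alternating \(\F\)-bilinear map \(B\colon V\times V\to N\) with image in \(P'\). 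For \(\lambda\in\widehat N\) I would then consider the alternating form \(B_\lambda:=\lambda\circ B\) on \(V\). The key remark is that \(g\in Z_\lambda\) precisely when \(\lambda([g,x])=1\) for all \(x\in P\), i.e.\ when the image of \(g\) in \(V\) lies in the radical of \(B_\lambda\); combining this with Remark~\ref{fullyramified}, \(\lambda\) is fully ramified with respect to \(P/N\) if and only if \(Z_\lambda=N\), that is, if and only if \(B_\lambda\) is \emph{nondegenerate}. So the task becomes: count the \(\lambda\in\widehat N\) for which \(B_\lambda\) is degenerate, and show this count is at least \(p^{m-\lfloor n/2\rfloor}\).

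If \(n\) is odd there is nothing to do: an alternating form on an odd-dimensional space is always degenerate, so all \(p^m\) characters \(\lambda\) fail to be fully ramified, and \(p^m\ge p^{m-\lfloor n/2\rfloor}\). So assume \(n=2d\) is even, whence \(\lfloor n/2\rfloor=d\) and, by the hypothesis \(m>n/2\), \(d<m\). Fixing a basis \(e_1,\dots,e_n\) of \(V\), I write \(M(\lambda)\) for the Gram matrix \(\bigl(\lambda([e_i,e_j])\bigr)_{i,j}\): it is an alternating \(n\times n\) matrix over \(\F\), its \((i,j)\)-entry \(\lambda([e_i,e_j])\) is a \emph{linear} form in the \(m\) coordinates of \(\lambda\), and \(B_\lambda\) is degenerate exactly when \(\det M(\lambda)=0\). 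The crucial point is to pass to the Pfaffian: set \(f(\lambda)=\mathrm{Pf}(M(\lambda))\), so that \(\det M(\lambda)=f(\lambda)^2\) and \(f\) is a polynomial in the \(m\) coordinates of \(\lambda\) of total degree at most \(d\). This is where the exponent \(\lfloor n/2\rfloor\) (rather than \(n\)) is produced: the determinant alone would have degree \(n\) and yield only the useless bound \(p^{m-n}\). With this, the set of characters that are \emph{not} fully ramified is exactly the zero locus \(Z(f)=\{\lambda:f(\lambda)=0\}\).

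To conclude I would invoke Warning's theorem: over \(\F=\GF p\), a nonzero polynomial in \(m\) variables of total degree \(d\) which has at least one zero has at least \(p^{m-d}\) zeros. Since \(\mathrm{Pf}(0)=0\) (the trivial character is never fully ramified), \(f\) has a zero; if \(f\) is identically zero then \(Z(f)=\widehat N\) and the bound is immediate, while if \(f\neq 0\) then, because \(\deg f\le d<m\), Warning's bound gives \(|Z(f)|\ge p^{m-d}=p^{m-\lfloor n/2\rfloor}\), as required. The main obstacle, and the real content of the argument, is the degree bookkeeping in this final step: reaching the exponent \(\lfloor n/2\rfloor\) forces the use of the Pfaffian in place of the determinant, and one must verify carefully both that \(B_\lambda\) is genuinely an \(\F\)-bilinear alternating form (this uses that \(N\) is elementary abelian and central and that \(\frat P\le N\)) and that the fully-ramified criterion of Remark~\ref{fullyramified} translates precisely into nondegeneracy of \(B_\lambda\). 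A more computational alternative, closer to the cited work of Berkovich and Heineken, would replace Warning's theorem by a direct stratification of \(\widehat N\) according to the rank of \(B_\lambda\), but the polynomial zero-count makes the bound transparent.
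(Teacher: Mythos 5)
Your proposal is correct and takes essentially the same route as the paper's own proof: both translate full ramification of \(\lambda\) into nondegeneracy of the alternating form on \(P/N\) induced by \(\lambda([\,\cdot\,,\cdot\,])\), observe that the degeneracy locus is the zero set of the Pfaffian \(f\) (with \(f^2=\det\), so \(\deg f = n/2\) rather than \(n\)), and conclude via Warning's theorem that this zero set has at least \(p^{m-n/2}\) points. Your treatment is marginally more explicit on the edge cases (odd \(n\), and an identically vanishing Pfaffian), but the substance is identical.
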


\begin{proof}
We can assume that $n$ is even, as otherwise no character $\lambda
\in\widehat{N}$ could be fully ramified with respect to  $P/N$.
Let $\epsilon \in \C$ be a fixed primitive $p$th-root of unity and
$\K = \mathbb{Z}/p\mathbb{Z}$. Then we can associate to every
$\lambda \in\widehat{N}$ an alternating bilinear form $\langle \ , \
\rangle_{\lambda}$ on $P/N$ (as a $\K$-space) by setting, for $aN, bN
\in P/N$, $\epsilon^{\langle aN, bN \rangle_{\lambda}} =
\lambda([a,b])$. From the above remark, it is clear that $\lambda$ is fully ramified with respect to $P/N$ if and
only if the form $\langle \ , \ \rangle_{\lambda}$ is
non-degenerate.

Choosing a basis $\lambda_1, \lambda_2, \ldots, \lambda_m$ of the
dual group $\widehat{N}$ of $N$, there is a bijection between $\K^m$ and
$\widehat{N}$, by associating $\lambda =
\lambda_1^{x_1}\lambda_2^{x_2}\cdots\lambda_m^{x_m} \in \widehat{N}$ to
$({x_1}, {x_2}, \ldots, {x_m}) \in \K^m$; moreover,
$$\langle \ , \ \rangle_{\lambda} = \sum_{i=1}^m x_i\langle \ , \ \rangle_{\lambda_i} \;  .$$
If $A_i$ are the matrices associated to the forms $\langle \ , \
\rangle_{\lambda_i}$ (with respect to a suitable basis of $P/N$),
$\langle \ , \ \rangle_{\lambda}$ is degenerate if and only if
$$d({x_1}, {x_2}, \ldots, {x_m}) = \det{\sum_{i=1}^m {x_i}A_i} = 0.$$
Now, $d({x_1}, {x_2}, \ldots, {x_m}) = f^2({x_1}, {x_2}, \ldots,
{x_m})$ where $f$ is a homogeneous polynomial of degree $n/2$
(see~\cite[(IV), page 46]{G}.
By~\cite[Satz 3]{W}, $f$ has at least $p^{m -n/2}$ roots in $\K^m$
and the result follows.
\end{proof}

We next proceed through a series of results concerning semi-linear actions.

\begin{lemma}
\label{semilinear0} Let \(p\) be a prime, \(V\) a vector space of order \(p^n\), and $H$ a subgroup of  $\Gamma(V)$. Also, setting $X_0 = H \cap \Gamma_0(V)$, let
$\delta$ be a set of primes in $\pi(H) \setminus \pi(X_0)$, let $D$ be a Hall $\delta$-subgroup of $H$. Then $|D|$ divides $n$ and, defining \(k=\dfrac{p^n -1}{p^{n/|D|}-1}\),  the
following facts are equivalent.
\begin{enumeratei}
\item For every $v \in V$, $\cent Hv$ contains a suitable conjugate of $D$.
\item $|\{ D^h : h \in H \}| = k$.
\item $k$ divides $|X_0|$ and $|D|$ is coprime to $p^n -1$.
\end{enumeratei}
\end{lemma}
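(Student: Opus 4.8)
First I would recall the basic theory of the semilinear group. An element of $\Gamma(V)=\Gamma(p^n)$ has the form $x\mapsto a x^{\phi}$ where $a\in\F^{\times}$ (with $\F=\GF{p^n}$) and $\phi$ a field automorphism; the subgroup $\Gamma_0(V)$ of field multiplications is the set of maps $x\mapsto ax$, so $\Gamma(V)/\Gamma_0(V)\cong\mathrm{Gal}(\F|\GF p)$ is cyclic of order $n$. Thus $X_0=H\cap\Gamma_0(V)$ is the kernel of the restriction to $H$ of the projection onto the Galois group, and $H/X_0$ embeds as a subgroup of the cyclic group of order $n$. Since the primes in $\delta$ lie in $\pi(H)\setminus\pi(X_0)$, a Hall $\delta$-subgroup $D$ injects into $H/X_0$, so $|D|$ divides $n$; this gives the preliminary assertion. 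Write $d=|D|$ and $e=n/d$, so $k=(p^n-1)/(p^e-1)$. The image of $D$ in the Galois group is the (unique, since Galois group is cyclic) subgroup of order $d$, whose fixed field is $\F_0=\GF{p^e}$.

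**The equivalences.** I would prove the cycle (a)$\Rightarrow$(b)$\Rightarrow$(c)$\Rightarrow$(a). For (a)$\Rightarrow$(b), I would count the conjugates of $D$. Any two conjugates of $D$ that fix a common nonzero vector must be related in a controlled way; the key computation is to identify, for a generator $\phi$ of the image of $D$, which vectors are fixed by a conjugate $a\phi a^{-1}$-type element. A semilinear element $x\mapsto bx^{\phi_0}$ (with $\phi_0$ the Frobenius $x\mapsto x^{p^e}$ generating the order-$d$ Galois subgroup) fixes $v\ne 0$ iff $bv^{p^e}=v$, i.e. $b=v^{1-p^e}$; as $v$ ranges over $\F^{\times}$ the value $v^{1-p^e}$ ranges over the image of the $(1-p^e)$-power map, a subgroup of $\F^{\times}$ of index $\gcd(p^e-1,p^n-1)=p^e-1$ and hence of order $(p^n-1)/(p^e-1)=k$. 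So the number of distinct ``multipliers'' $b$ for which the corresponding semilinear map fixes some vector is exactly $k$, and each such map fixes the $k$-element set of $v$ with $v^{p^e-1}=b^{-1}$-type relation. This is the counting backbone, and it ties the number $k$ to the conjugacy class of $D$.

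**Main obstacle.** The hard part will be the bookkeeping that matches the number of conjugates $|\{D^h:h\in H\}|=|H:\norm H D|$ to the number $k$ and to the divisibility condition $k\mid|X_0|$. The point is that $D$ acts on $V$ with fixed points precisely governed by the subfield $\F_0=\GF{p^e}$: the subspace $\cent VD$ of $D$-fixed vectors is $\F_0$ itself, a $1$-dimensional $\F_0$-space, of size $p^e$. For (a) to hold, \emph{every} nonzero $v$ must be fixed by some conjugate $D^h$, so the $H$-conjugates of the fixed spaces $\cent V{D^h}=(\F_0)^h$ must cover $V\setminus\{0\}$. Since each such fixed space contributes $p^e-1$ nonzero vectors, covering all $p^n-1$ nonzero vectors forces at least $k=(p^n-1)/(p^e-1)$ distinct conjugates, and a disjointness/transitivity argument shows exactly $k$; this is where condition (a) and (b) meet. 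For (c), I would argue that $k\mid|X_0|$ guarantees $\Gamma_0(V)\cap X_0$ is large enough to act transitively on the set of these $k$ fixed subspaces (equivalently, $X_0$ contains the cyclic group of order $k$ fixing $\F_0$ setwise and permuting its cosets transitively), while $\gcd(|D|,p^n-1)=1$ ensures $D$ has no nontrivial fixed-point obstruction coming from the multiplication part and is genuinely a complement realizing the order-$d$ Galois action. I would then close the loop by showing these two numerical conditions are exactly what is needed to rerun the covering argument and recover (a).

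**Technical tools I would assemble first.** Before the main argument I would record: (i) the precise description of $\cent Hv$ as the stabilizer, and the fact that an element $x\mapsto bx^{\phi}$ with $\phi\ne 1$ fixing a nonzero vector forces its $\phi$-part to have nontrivial fixed field; (ii) the identity $\gcd(p^e-1,p^n-1)=p^{\gcd(e,n)}-1=p^e-1$ since $e\mid n$, which is what produces the index $k$; and (iii) that $\Gamma_0(V)$ acts regularly on $V\setminus\{0\}$, so counting fixed vectors is equivalent to counting solvability of the multiplier equations above. With these in hand the three implications become a matter of matching the orbit-counting on fixed subspaces to the arithmetic quantity $k$, and the equivalence follows.
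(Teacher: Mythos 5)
Your counting framework for the equivalence of (a) and (b) is essentially the paper's: since $\Gamma_0(V)$ acts regularly on $V\setminus\{0\}$, for $v\neq 0$ the stabilizer $\cent{H}{v}$ meets $X_0$ trivially, hence embeds in the cyclic group $H/X_0$ and so contains at most one Hall $\delta$-subgroup of $H$; this is the disjointness you allude to but never actually assemble, and together with $|\cent{V}{D}|=p^{n/|D|}$ it makes (a)$\Leftrightarrow$(b) a one-line count. (Two slips in your write-up: each fixed space contains $p^{n/|D|}-1$ nonzero vectors, not $k$ of them --- you correct this later --- and $\cent{V}{D}$ is a one-dimensional $\GF{p^{n/|D|}}$-subspace of the form $c\,\GF{p^{n/|D|}}$, not the subfield itself: a Hall $\delta$-subgroup is generated by some $x\mapsto bx^{p^{n/|D|}}$ whose multiplier $b$ has norm $1$, not by the pure Frobenius, and it is Hilbert 90 that produces the nonzero fixed vectors.)

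The genuine gap is that your claimed cycle (a)$\Rightarrow$(b)$\Rightarrow$(c)$\Rightarrow$(a) contains no argument at all for (b)$\Rightarrow$(c), and that is the only implication that can produce the two conclusions of (c). The paper obtains it from a Frattini argument: $X_0D$ is normal in $H$ because $H/X_0$ is abelian, and Hall $\delta$-subgroups of the solvable group $X_0D$ are conjugate, so $H=X_0\,\norm{H}{D}$ and therefore $k=|H:\norm{H}{D}|=|X_0:X_0\cap\norm{H}{D}|$ divides $|X_0|$; the coprimality clause then follows from the congruence $k=1+p^{n/|D|}+\cdots+p^{(|D|-1)n/|D|}\equiv |D| \pmod{r}$ for every prime $r$ dividing $p^{n/|D|}-1$, since $k\mid |X_0|$ and $\gcd(|D|,|X_0|)=1$ force $\gcd(|D|,k)=1$, whence $\gcd(|D|,p^{n/|D|}-1)=1$ and so $\gcd(|D|,p^n-1)=1$. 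Neither step (Frattini, congruence) appears in your proposal, and your covering picture cannot replace them: knowing there are exactly $k$ conjugates of $D$ says nothing about $|X_0|$ until you can realize the conjugation inside $X_0$, which is precisely what Frattini does. Note also that the same congruence is silently needed in the (c)$\Rightarrow$(a) step you did sketch: the order-$k$ subgroup of $\Gamma_0(V)$ acts transitively on the $k$ lines only if $\gcd(k,p^{n/|D|}-1)=1$, and this is exactly $\gcd(|D|,p^{n/|D|}-1)=1$, i.e.\ the coprimality hypothesis of (c) fed through $k\equiv|D|$. So both the missing implication and the loose end in the implication you sketched come down to the same two ingredients; with them supplied, your route would be sound and close to the paper's.
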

\begin{proof}
As $D\cap\Gamma_0(V)= (D\cap H)\cap\Gamma_0(V)=D\cap X_0=1$, then $|D|$ divides $n$. Moreover, by
Lemma~3(ii) in \cite{D} we get $|\cent VD| = p^{n/|D|}$. Since $X_0$ acts fixed-point freely  on $V$,  for $v\in V\setminus \{0\}$ we have that \(\cent H v\simeq \cent H v
X_0/X_0\) is cyclic, and therefore $\cent Hv$ contains at most one Hall $\delta$-subgroup of $H$; now the equivalence between (a) and  (b) follows by
counting in $V \setminus \{0  \}$.
Assume now (b). Observing that \(X_0D\) is normal in \(H\) (because \(H/X_0\) is abelian), by the Frattini argument we have \(X_0\norm H D=H\), and (c) follows at once. Conversely, assume (c). If \(r\) is a prime divisor of \(p^{n/|D|}-1\), then $p^{n/|D|} \equiv 1 \pmod r$ and so $k
\equiv |D| \pmod r$; since \(|D|\) is coprime with \(|X_0|\) (hence with \(k\)), it follows that \(|D|\) is coprime with \(p^{n/|D|}-1\), and therefore with \(|\Gamma_0(V)| = p^n -1\). As a consequence,
 $[\Gamma_0(V), D]\simeq\Gamma_0(V)/\cent{\Gamma_0(V)}D$ has order $k$, so $[\Gamma_0(V), D]$ is contained in $X_0$. Finally, \([\Gamma_0(V),D]=[\Gamma_0(V),D,D]\leq[X_0,D]\), hence $k=|[X_0,D]|$, and therefore \(k=|X_0/\cent {X_0} D|=|H:\norm H D|=
|\{ D^h : h \in H \}|,\) as desired.
\end{proof}

\begin{lemma}
\label{semilinear1} Let $H$ act  on a group $A$  
 and let $s$ be a prime divisor of $|H/\cent HA|$. Assume that,
for every $a \in A\setminus\{1\}$,  $\cent Ha$ contains a
Sylow $s$-subgroup of $H$ as a normal subgroup. Then $A$ is an elementary abelian
$p$-group for some prime $p$, and either $p = s = 3$ (and \(|A|=3^2\)), or $H/\cent HA \leq
\Gamma(A)$, $(H/\cent HA) \cap \Gamma_0(A)$ acts irreducibly on $A$, and
\(s\nmid |(H/\cent HA) \cap \Gamma_0(A)|\).
\end{lemma}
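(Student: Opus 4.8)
The plan is to reduce to a faithful action and then read a partition of $A$ off the hypothesis. Since $\cent HA$ lies in the kernel of the action and inside every $\cent Ha$, passing to $\o H=H/\cent HA$ changes neither the hypothesis nor the conclusion (a normal Sylow $s$-subgroup of $\cent Ha$ projects onto a normal Sylow $s$-subgroup of $\cent{\o H}a$), so I assume $\cent HA=1$ and fix $S\in\syl sH$. For $a\in A\setminus\{1\}$ the unique (because normal) Sylow $s$-subgroup of $\cent Ha$ is a conjugate $S^h$, whence $a\in\cent A{S^h}$; thus $A=\bigcup_{h\in H}\cent A{S^h}$. If $1\ne a\in\cent A{S^h}\cap\cent A{S^{h'}}$, then $S^h,S^{h'}$ are Sylow $s$-subgroups of $\cent Ha$, one of them normal, so $S^h=S^{h'}$. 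Since $A\ne1$ (as $s\mid|H|$ forces a nontrivial action), the cover is nontrivial and $\cent AS\ne1$; hence the distinct subgroups $\cent A{S^h}$ meet pairwise in $\{1\}$, and $\{\cent A{S^h}:h\in H\}$ is a \emph{partition} of $A$ into $t=|H:\norm HS|$ conjugate subgroups of equal order, permuted transitively by $H$, with $\norm HS$ stabilizing the component $\cent AS$.

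Next I would identify $A$. A finite group admitting a nontrivial partition is severely restricted (Baer--Kegel--Suzuki): the configurations with more than one conjugacy class of components---Frobenius groups, whose kernel is not conjugate to the complements, together with the simple sections of $\SL{2,q}$ or Suzuki type---are incompatible with all our components being $H$-conjugate, while the transitivity of $H$ on the $\cent A{S^h}$ excludes the non-elementary $p$-group partitions. Thus $A$ is elementary abelian, $|A|=p^n$, and the components are $\GF p$-subspaces of a common dimension $d$, forming an $H$-invariant \emph{spread}; counting gives $p^n-1=t(p^d-1)$, and (as in \cite{D}) one has $\cent AS$ of dimension $d=n/|S|$, so $t=(p^n-1)/(p^{n/|S|}-1)$.

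Writing $V=A$, suppose first that $H\le\Gamma(V)$ and put $X_0=H\cap\Gamma_0(V)$. Since every nontrivial field multiplication acts on $V=\GF{p^n}$ without nonzero fixed points while $\cent VS\ne0$, I get $S\cap\Gamma_0(V)=1$; as $\Gamma_0(V)\nor\Gamma(V)$ the same holds for every conjugate of $S$, so no nontrivial $s$-element of $H$ lies in $\Gamma_0(V)$, giving $s\nmid|X_0|$, while $S$ injects into $\Gamma(V)/\Gamma_0(V)\cong C_n$, so $|S|\mid n$. Taking $\delta=\{s\}$ and $D=S$ in Lemma~\ref{semilinear0}, part (a) holds because every $v$ is centralized by a conjugate of $S$, so part (c) yields that $k=(p^n-1)/(p^{n/|S|}-1)$ divides $|X_0|$. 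A primitive prime divisor of $(p,n)$---available by Zsigmondy outside its two exceptional pairs, which are checked directly---divides $k$ but no $p^j-1$ with $j<n$, hence $X_0$ contains an element generating $\GF{p^n}$ over $\GF p$ and acts irreducibly on $V$. This settles the semilinear alternative.

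The crux, and the main obstacle, is the dichotomy itself: showing that the $H$-invariant spread above is \emph{Desarguesian}, so that $H$ embeds into $\Gamma(V)$, unless one lands in the sporadic configuration $p=s=3$, $|A|=3^2$, where $H$ contains a natural copy of $\SL{2,3}$ and $H\not\le\Gamma(V)$ (indeed $|\SL{2,3}|=24>16=|\Gamma(3^2)|$). Here the elementary counting that produced the partition no longer suffices; one must bring in the structure theory of spread-preserving linear groups and of $\Gamma$-groups (in the vein of Hering--Foulser, as packaged in \cite{MW}), exploiting that $\norm HS$ acts on the component $\cent VS$ and that the point stabilizers are pinned down by the normal-Sylow hypothesis, in order to force the field structure and to isolate this single exception. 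Verifying that this step yields exactly the $\SL{2,3}$ case and nothing more is where the real weight of the lemma lies.
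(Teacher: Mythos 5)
Your reduction to a faithful action, the observation that the normal Sylow $s$-subgroup of $\cent Ha$ is its \emph{unique} Sylow $s$-subgroup (so that the subgroups $\cent A{S^h}$ pairwise intersect trivially and cover $A$), and the conditional verification in your third paragraph (granted $H\leq\Gamma(A)$, Lemma~\ref{semilinear0} plus a primitive prime divisor do give irreducibility of $H\cap\Gamma_0(A)$ on $A$ and $s\nmid|H\cap\Gamma_0(A)|$) are all correct. But the proposal does not prove the lemma, and you say so yourself: the entire content of the statement is the dichotomy ``either $p=s=3$ with $|A|=3^2$, or $H/\cent HA\leq\Gamma(A)$,'' and your final paragraph explicitly leaves that step to an unexecuted appeal to the structure theory of spread-preserving groups ``in the vein of Hering--Foulser.'' Identifying where the difficulty lies is not the same as resolving it; as written, this is a genuine gap, not a deferred routine detail. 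There is also a second, unacknowledged gap earlier: the claim that your partition forces $A$ to be elementary abelian is asserted rather than argued. After Baer--Kegel--Suzuki you must still exclude Frobenius groups and $p$-groups with proper Hughes subgroup, and doing so from $H$-transitivity of the components requires a real argument (for instance: since $\zent A$ is characteristic, all components meet $\zent A$ in subgroups of one common order, and one must then count to force $A$ abelian, and argue further to force exponent $p$); nothing of the sort appears in the proposal.

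For comparison, the paper's proof is three lines precisely because it quotes Lemma~4.4 of \cite{L} for exactly the step you flag as ``the real weight of the lemma'': that result gives that either $|A|=9$ and $s=3$, or $\o H=H/\cent HA$ has a normal \emph{abelian} subgroup acting irreducibly on $A$. Then \cite[Theorem~2.1]{MW} upgrades this to $\o H\leq\Gamma(A)$ with that subgroup inside $\o H\cap\Gamma_0(A)$, and $s\nmid|\o H\cap\Gamma_0(A)|$ follows because every Sylow $s$-subgroup of $\o H$ centralizes a non-trivial element of $A$ while non-trivial elements of $\Gamma_0(A)$ act fixed-point-freely. A self-contained treatment along the lines of your spread construction would amount to reproving Lewis's lemma; otherwise the repair is to cite it, as the paper does.
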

\begin{proof}
Set $\overline{H} = H/\cent HA$.
By~\cite[Lemma 4.4]{L}, then either $|A| = 9$ and $s= 3$, or
there exists a normal abelian subgroup $\overline{X_0}$ of $\overline{H}$ such that $A$ is
an irreducible $\overline{X_0}$-module. By~\cite[Theorem 2.1]{MW}, this implies that
$\overline{H} \leq \Gamma(A)$ and that $\overline{X_0} \leq \overline{H} \cap \Gamma_0(A)$. The last
claim is obvious, as $s\mid |\o H|$ and every Sylow \(s\)-subgroup of \(\overline{H}\) centralizes
some non-trivial element of \(A\), whereas every non-trivial element
of \(\overline{H}\cap\Gamma_0(A)\) acts fixed-point freely on \(A\).
\end{proof}

The following fact is folklore, but we include a proof for convenience.

\begin{lemma}
  \label{ppd}
Let $H \leq \Gamma(V)$ be a  group of semilinear
maps on $V$. Let $|V| = p^n$, $p$ prime. Let $T_0$ be a  subgroup  of $H$ such
that $|T_0|$ is   a primitive prime divisor
of $p^n -1$.
Then
$\cent H {T_0} =   H \cap \Gamma_0(V)=\fit H$.
\end{lemma}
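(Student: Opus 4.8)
The plan is to exploit the explicit structure of the semilinear group. Identifying $V$ with the additive group of $\F=\GF{p^n}$, we have $\Gamma(V)=\Gamma_0(V)\rtimes\gen{\phi}$, where $\Gamma_0(V)$ is cyclic of order $p^n-1$ (the field multiplications) and $\phi$ is the Frobenius automorphism $x\mapsto x^p$, of order $n$. Write $t=|T_0|$; since $t$ is prime, $T_0$ is cyclic of order $t$. The primitive prime divisor hypothesis says precisely that the multiplicative order of $p$ modulo $t$ is $n$, so $n\mid t-1$ and in particular $t>n$, whence $t\nmid n$. As $\Gamma(V)/\Gamma_0(V)\cong\gen{\phi}$ has order $n$ and $t\nmid n$, the image of $T_0$ in this quotient is trivial, i.e. $T_0\leq\Gamma_0(V)$; being the unique subgroup of order $t$ of the cyclic group $\Gamma_0(V)$, it is generated by multiplication by some $\eta\in\F^\times$ of order $t$.

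First I would compute $\cent{\Gamma(V)}{T_0}$. As $\Gamma_0(V)$ is abelian, it centralizes $T_0$. For a general element $g=c\phi^j$ with $c\in\Gamma_0(V)$ and $0\leq j<n$, conjugation by $\phi^j$ carries multiplication by $\eta$ to multiplication by $\eta^{p^j}$, so $g$ centralizes the generator of $T_0$ if and only if $\eta^{p^j}=\eta$, that is, $t\mid p^j-1$; by the primitive-prime-divisor property this forces $n\mid j$, hence $j=0$ and $g\in\Gamma_0(V)$. Thus $\cent{\Gamma(V)}{T_0}=\Gamma_0(V)$, and intersecting with $H$ gives $\cent{H}{T_0}=H\cap\Gamma_0(V)$, the first claimed equality.

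It remains to identify $H\cap\Gamma_0(V)$ with $\fit H$. Since $\Gamma_0(V)\nor\Gamma(V)$, the subgroup $H\cap\Gamma_0(V)$ is normal in $H$, and it is abelian (a subgroup of the cyclic group $\Gamma_0(V)$), hence nilpotent; therefore $H\cap\Gamma_0(V)\leq\fit H$. For the reverse inclusion I would show that $\fit H$ centralizes $T_0$. Note first that $T_0\leq H\cap\Gamma_0(V)\leq\fit H$. Decomposing the nilpotent group $\fit H$ as the direct product of its Sylow subgroups, the $t'$-part centralizes the $t$-part and hence centralizes $T_0$. For the $t$-part, observe that since $t\nmid n$ every $t$-element of $\Gamma(V)$ already lies in $\Gamma_0(V)$, so a Sylow $t$-subgroup of $\Gamma(V)$ coincides with one of the cyclic group $\Gamma_0(V)$ and is therefore cyclic; consequently the Sylow $t$-subgroup of $\fit H$ is abelian and centralizes $T_0$ as well. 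Thus $\fit H\leq\cent{H}{T_0}=H\cap\Gamma_0(V)$, completing the chain of equalities.

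The structural bookkeeping is routine; the one genuinely load-bearing point — and the place where the hypothesis is essential — is the reverse inclusion $\fit H\leq\Gamma_0(V)$, whose proof rests entirely on the inequality $t>n$ (equivalently $n\mid t-1$) extracted from the primitive-prime-divisor assumption. It is exactly this inequality that both places $T_0$ inside $\Gamma_0(V)$ and forces the Sylow $t$-subgroup of $\Gamma(V)$ to be cyclic, so that the $t$-part of $\fit H$ cannot act nontrivially on $T_0$.
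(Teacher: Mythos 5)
Your proof is correct, but the way you establish the key equality $\cent H{T_0}=H\cap\Gamma_0(V)$ is genuinely different from the paper's. The paper argues module-theoretically: by Huppert's II.3.10 the primitive-prime-divisor hypothesis makes $V$ an irreducible $\GF p[T_0]$-module, so Schur's Lemma and Wedderburn's Theorem show that ${\rm End}_{\GF p[T_0]}(V)$ is a finite field, whence $\cent H{T_0}$ embeds in its multiplicative group and is cyclic; since it also acts irreducibly (it contains $T_0$), a second application of II.3.10 places it inside $H\cap\Gamma_0(V)$. You instead work with the explicit decomposition $\Gamma(V)=\Gamma_0(V)\rtimes\gen{\phi}$ and compute directly that no element $c\phi^j$ with $1\leq j<n$ can centralize multiplication by $\eta$, because that would force $t\mid p^j-1$ against the primitive-prime-divisor property; this is purely group-theoretic, avoids Schur/Wedderburn entirely, and actually proves the stronger statement $\cent{\Gamma(V)}{T_0}=\Gamma_0(V)$, from which the claim follows by intersecting with $H$. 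The remaining steps are essentially the same in both arguments: $H\cap\Gamma_0(V)$ is normal and abelian, hence lies in $\fit H$, and conversely $t>n$ forces all $t$-elements (in the paper: a Sylow $t$-subgroup of $H$) into $\Gamma_0(V)$, so the Sylow decomposition of the nilpotent group $\fit H$ shows it centralizes $T_0$. What each approach buys: yours is elementary and self-contained, exploiting the hypothesis $H\leq\Gamma(V)$ to its fullest; the paper's is coordinate-free and follows the pattern (cyclic irreducible action $\Rightarrow$ inside $\Gamma_0$) that it reuses elsewhere, and it would survive in settings where one knows only that $T_0$ acts irreducibly rather than knowing an explicit embedding into the semilinear group.
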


\begin{proof}
Let $\mathbb{K} = {\rm GF}(p)$. By~\cite[II.3.10]{H}, $V$ is an irreducible
$\mathbb{K}[T_0]$-module. Hence by Schur's Lemma
$\mathbb{L} = {\rm End}_{\mathbb{K}[T_0]}(V)$ is division ring. Since $\mathbb{L}$ is
finite, it is a field by Wedderburn's Theorem.
So, as $\cent H{T_0}$ is a subgroup of  the multiplicative group of $\mathbb{L}$,
$\cent H{T_0}$ is cyclic. But $\cent H{T_0}$ acts irreducibly on $V$ (as it contains
 $T_0$),
and hence by~\cite[II.3.10]{H} we get that
$\cent H {T_0} \leq  H \cap \Gamma_0(V)$.
So, as $H \cap \Gamma_0(V)$ is cyclic, we conclude that $\cent H {T_0} =  H \cap \Gamma_0(V) \leq \fit H$.
Note that  $t > n$ because $t$ is a primitive prime divisor of $p^n -1$, so a Sylow $t$-subgroup of $H$ is contained in $\cent H{T_0}$, 
hence $\fit H$ centralizes $T_0$. 
\end{proof}

\begin{lemma}
  \label{lemmaB}
Let $H$ be a solvable group, p a prime, and  $V_1$,  $V_2$  two $H$-modules over \(\GF p\).
Assume that there exists a prime $s\in \pi(H/\fit H)\setminus \{p\}$ such that,  for \(i\in\{1,2\}\) and  for every  $v \in V_i \setminus\{0\}$,  $\cent H v$ contains a Sylow $s$-subgroup of $H$ as
a normal subgroup.
Then both  $V_1$ and $V_2$ are irreducible $H$-modules,  $|V_1| = |V_2|$, and 
$H/\cent H{V_i} \leq \Gamma(V_i)$ for \(i\in\{1,2\}\).
\end{lemma}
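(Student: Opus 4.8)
The plan is to treat the two modules in parallel, the point being that they interact only through a common Sylow $s$-subgroup of $H$. Fix $S\in\syl sH$ and set $C_i=\cent H{V_i}$ for $i\in\{1,2\}$; note that $s\in\pi(H/\fit H)$ means exactly that $S\not\le\fit H$.

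First I would establish that $s$ divides $|H/C_i|$, i.e. $S\not\le C_i$; this is the only place the hypothesis $S\not\le\fit H$ is used. Suppose $S\le C_i$. Then for every $v\in V_i\setminus\{0\}$ we have $S\le C_i\le\cent Hv$, so $S$ is a Sylow $s$-subgroup of $\cent Hv$; since by hypothesis $\cent Hv$ has a normal --- hence unique --- Sylow $s$-subgroup, that subgroup must be $S$, and in particular $\cent Hv\le\norm HS$. Thus $C_i\le\cent Hv\le\norm HS$, so $S\nor C_i$. As $S=\oh s{C_i}$ is then characteristic in the normal subgroup $C_i\nor H$, we get $S\nor H$, whence $S=\oh sH\le\fit H$, a contradiction. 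Therefore $s\mid|H/C_i|$ for both $i$.

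Next, for each $i$ I would apply Lemma~\ref{semilinear1} to the action of $H$ on $V_i$ (legitimate now that $s\mid|H/C_i|$): the assumption that every $\cent Hv$ contains a normal Sylow $s$-subgroup is precisely its hypothesis. Since $V_i$ is a $\GF p$-module it is elementary abelian, so the exceptional conclusion would force $p=s$; but $s\ne p$, so it is excluded. Hence $H/C_i\le\Gamma(V_i)$, the subgroup $X_0^{(i)}=(H/C_i)\cap\Gamma_0(V_i)$ acts irreducibly on $V_i$ --- so that $V_i$ is an irreducible $H$-module --- and $s\nmid|X_0^{(i)}|$. This already yields two of the three assertions.

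It remains to prove $|V_1|=|V_2|$, which I regard as the main obstacle. Write $|V_i|=p^{n_i}$ and let $d_i=|SC_i/C_i|$ be the order of the image $\bar S_i$ of $S$ in $H/C_i\le\Gamma(V_i)$. Since $s\nmid|X_0^{(i)}|$, the $s$-group $\bar S_i$ meets $\Gamma_0(V_i)$ trivially, so by Lemma~3(ii) of \cite{D} one has $d_i\mid n_i$ and $|\cent{V_i}S|=|\cent{V_i}{\bar S_i}|=p^{n_i/d_i}$; moreover $d_i\ge s>1$ because $S\not\le C_i$. Now I would count in the spirit of Lemma~\ref{semilinear0}(a)$\Leftrightarrow$(b), using the map $\phi_i\colon V_i\setminus\{0\}\to\syl sH$ sending $v$ to the unique normal Sylow $s$-subgroup $S_v$ of $\cent Hv$. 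This map is $H$-equivariant, and as $\syl sH$ is a single conjugacy class it is surjective with all fibres of equal size; its fibre over $S$ is exactly $\cent{V_i}S\setminus\{0\}$, since $S_v=S$ amounts to $S\le\cent Hv$. Counting $V_i\setminus\{0\}$ therefore gives
\[
p^{n_i}-1=|\syl sH|\,(p^{n_i/d_i}-1),
\]
so that $N:=|\syl sH|=|H:\norm HS|$, which does not depend on $i$, equals $\sum_{j=0}^{d_i-1}p^{jn_i/d_i}$. Reading this off as a base-$p$ expansion (the exponents $0,n_i/d_i,\dots,(d_i-1)n_i/d_i$ are distinct, so no carrying occurs), the positions of the nonzero digits of $N$ recover both the gap $n_i/d_i$ and the number $d_i$ of digits, hence $n_i=(n_i/d_i)\cdot d_i$. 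As $N$ is the same for $i=1$ and $i=2$, we conclude $n_1=n_2$, i.e. $|V_1|=|V_2|$.
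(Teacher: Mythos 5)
Your proof is correct, and while the first part coincides with the paper's, your proof of $|V_1|=|V_2|$ takes a genuinely different and more elementary route. The opening steps are the same: you show $s$ divides $|H:\cent H{V_i}|$ (the paper argues identically, via $S$ becoming characteristic in $C_i=\cent H{V_i}\nor H$), then invoke Lemma~\ref{semilinear1}, with $s\neq p$ excluding its exceptional case, to get irreducibility and $H/C_i\leq\Gamma(V_i)$. For the equality of orders, the paper works with primitive prime divisors: writing $X_i/C_i=(H/C_i)\cap\Gamma_0(V_i)$, it first secures a Zsigmondy prime $t_i$ for $(p,n_i)$ (the exceptions $n_i=2$ and $p^{n_i}=2^6$ are ruled out because $(p^{n_i}-1)/(p^{n_i/s}-1)$ divides $|X_i/C_i|$ while $s\nmid|X_i/C_i|$), takes the subgroup $T_i/C_i$ of order $t_i$ inside the cyclic group $X_i/C_i$, shows $T_1\not\leq C_1C_2$ (else the images of $T_1$ and of a suitable Sylow $s$-subgroup in $H/C_1$ would commute, and Lemma~\ref{ppd} would force $s$ to divide $|X_1/C_1|$), and then applies Lemma~\ref{ppd} again to get that $t_1$ divides $|X_2/C_2|$, hence $p^{n_2}-1$; since $n_1$ is the order of $p$ modulo $t_1$, this gives $n_1\mid n_2$, and symmetry finishes. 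You replace all of this by the $H$-equivariant fibration $v\mapsto S_v$ of $V_i\setminus\{0\}$ over $\syl sH$, which yields $(p^{n_i}-1)/(p^{n_i/d_i}-1)=|H:\norm HS|$ for both $i$ at once, followed by the base-$p$ digit argument recovering $n_i$ from this common integer (your remark that $d_i\geq s\geq 2$ is exactly what makes the gap $n_i/d_i$ readable off as the smallest positive digit position). This is in effect the counting equivalence (a)$\Leftrightarrow$(b) of Lemma~\ref{semilinear0}, but run on both modules simultaneously; the crucial point, which you exploit and the naive application of Lemma~\ref{semilinear0} to $H/C_i$ would miss, is that your fibres are indexed by the Sylow $s$-subgroups of $H$ itself rather than by their images modulo $C_i$, so the number of fibres is visibly independent of $i$. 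What your route buys is a self-contained argument avoiding Zsigmondy's theorem (and its exceptional-case analysis) and Lemma~\ref{ppd} entirely, plus the sharper arithmetic conclusion $d_1=d_2$ and $n_1/d_1=n_2/d_2$; what the paper's route buys is that it runs on the primitive-prime-divisor machinery (Lemma~\ref{ppd}) that the authors need anyway in Lemma~\ref{semilinear2} and Proposition~\ref{maxiemendamento}, so for them this lemma comes essentially for free from tools already in place.
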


\begin{proof}
For \(i\in\{1,2\}\), set $|V_i| = p^{n_i}$ and $C_i = \cent H{V_i}$; also, let $S$ be a Sylow $s$-subgroup of $H$.
Note that $s$ divides $|H/C_i|$, as otherwise $S$ would be a characteristic subgroup of $C_i \nor H$ yielding $S \leq \fit H$, against our assumption. 
 
So, by Lemma~\ref{semilinear1} we get that
$H/C_i \leq \Gamma(V_i)$, that $V_i$ is an irreducible $H$-module and that $s$ does not 
divide the order of $X_i/C_i = (H/C_i) \cap \Gamma_0(V_i)$.
Moreover, by Lemma~\ref{semilinear0}, $s$ divides $n_i$ and \((p^{n_i}-1)/(p^{n_i/s}-1)\) divides \(|X_i/C_i|\). 
Observe that there exists a primitive prime divisor \(t_i\) for \((p,n_i)\): otherwise either $n_i= 2$ or $p^{n_i} = 2^6$, and in both cases (as $s \neq p$) $s$ divides 
$(p^{n_i}-1)/(p^{n_i/s}-1)$, contradicting the fact that \(s\nmid|X_i/C_i|\). 
Note also that $t_i \neq s$, as $t_i > n_i$. 
Let $T_i/C_i$ be a subgroup of prime order $t_i$ of $X_i/C_i$; so $T_i/C_i \nor H/C_i$.

We claim that $T_1$ is not contained in  $C_1 C_2$. Otherwise, writing $\o H = H/C_1$, we have $\o{T_1} \leq \o{C_2}$ and, 
choosing a non-trivial $v \in V_2$ such that $S \leq \cent Hv$, both $\o S$ and $\o{T_1}$ are normal subgroups
of $\o{\cent Hv}$, so $[\o S, \o{T_1}] = 1$. But this, by Lemma~\ref{ppd}, implies that $s$ divides $|\o{X_1}|$, a contradiction. 

As $T_1C_2/C_2$ centralizes $T_2/C_2$ (also if $t_1 = t_2$), then again Lemma~\ref{ppd} yields that 
$t_1$ divides $|X_2/C_2|$ and hence $t_1$ divides $p^{n_2} -1$. We conclude that $n_2 \geq n_1$. 

Similarly, one shows that $n_1 \geq n_2$, completing the proof. 
\end{proof}

With the following lemmas, we will gather some relevant information
on the character degree graph of solvable groups.

\begin{lemma}
\label{semilinear2} Let $G$ be a solvable group, and $E$ an abelian normal subgroup of $G$. Assume that $E$ has a complement \(H\) in \(G\) and that $\fit G = E \times Z$ with $Z \leq H \cap \zent G$. 
Setting \(X=\fit H\), let $q \in \pi(X/Z)\setminus\pi(E)$, and
let $s \in \pi(H/Z)\setminus\pi(E)$ be such that $q$ and $s$ are not adjacent in $\Delta(G)$.
Let $Q\in \syl q{X}$,  $S \in \syl sH$ and $L = (QS)^H$ (the normal
closure of \(QS\) in $H$). Then the
following conclusions hold.
\begin{enumeratei}
\item We have \([E,Q]=[E,L]\) and \(\cent EQ= \cent EL\); moreover, $E
 =[E,L]\times \cent EL$.
\item Set \(A=[E,Q]\). Then \(A\) is an elementary abelian \(p\)-group, say of order \(p^n\), where \(p\) is a suitable prime. Also, 
$Z = \cent{LZ}A$ and, for every non-trivial $a \in A$, $\cent {LZ}a$ contains a conjugate of $S$ as a normal subgroup. 
Moreover, $LZ/Z \leq \Gamma(A)$ and $L_0/Z = (LZ/Z) \cap \Gamma_0(A)$ acts irreducibly  on $A$. We also have that $d = |SZ/Z|$ divides
\(n\), and \((p^n-1)/(p^{n/d}-1)\) divides \(|L_0/Z|\).
\item There exists a primitive prime divisor \(t\) of \(p^n-1\).
\item \(L_0/Z=\fit{LZ/Z}\). Moreover,  $LZ = L_0S$ and \(p\) does not divide \(LZ/Z\).
\end{enumeratei}
\end{lemma}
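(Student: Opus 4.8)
The plan is to prove the four parts of Lemma~\ref{semilinear2} in sequence, leaning heavily on the semilinear-action machinery already established (Lemmas~\ref{semilinear0}, \ref{semilinear1}, \ref{ppd}, \ref{lemmaB}) and on the hypothesis that $q$ and $s$ are non-adjacent in $\Delta(G)$. The crucial translation to make at the outset is from the graph-theoretic non-adjacency of $q$ and $s$ into a \emph{character-degree} statement, and thence into a \emph{fixed-point} statement about the action of $S$ (and $Q$) on $E$. Concretely, since $q,s\in\pi(H/Z)$ but $qs$ divides no degree in $\cd G$, one expects that for a suitable nonabelian section no irreducible character can carry both primes; the standard way to exploit this (via Ito--Michler and Clifford theory applied to the abelian normal subgroup $E$) is to show that for every nontrivial linear character $\lambda$ of $E$, the inertia subgroup $I_H(\lambda)$ must contain a full Sylow $s$-subgroup or a full $q$-subgroup of $H$, which dually says that $\cent{E}{S}$-type fixed spaces are large. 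I would first set up this dictionary carefully, as everything downstream is a consequence.

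First I would prove (a). Writing $X=\fit H$ and noting $Q\in\syl q X$ is normal in $H$ (as $X\nor H$ and $Q$ is characteristic in a normal Hall subgroup of the nilpotent $X$), the coprime action of $Q$ on the abelian $p$-group $E$ gives the decomposition $E=[E,Q]\times\cent EQ$. The content is to show $[E,Q]=[E,L]$ and $\cent EQ=\cent EL$ where $L=(QS)^H$; here I would use that $L$ normalizes $[E,Q]$ (because $[E,Q]$ is $H$-invariant, being a sum of $Q$-components stable under the normalizer of $Q$, which is all of $H$ since $Q\nor H$) together with the non-adjacency hypothesis to force $S$ to act trivially on the complement $\cent EQ$, i.e. $[\cent EQ, S]=1$. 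This last point is exactly where non-adjacency enters: if $S$ acted nontrivially on $\cent EQ$ one could manufacture a character degree divisible by both $q$ and $s$, contradicting $d_G(q,s)\neq\text{adjacent}$. Once $[E,S]\le[E,Q]$, the equalities and the $L$-invariant decomposition $E=[E,L]\times\cent EL$ follow formally.

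Next, for (b), set $A=[E,Q]$. Irreducibility/elementary-abelianness and the semilinear embedding come from the earlier lemmas: the fixed-point hypothesis ``$\cent{LZ}a\ge$ a normal Sylow $s$-subgroup for all $a\in A\setminus\{0\}$'' is precisely the input to Lemma~\ref{semilinear1}, giving either the exceptional $3^2$ case (which I would rule out using that $q\neq s$ both divide $|H/Z|$ and act on $A$, incompatible with $|A|=9$) or the conclusion $LZ/Z\le\Gamma(A)$ with $L_0/Z=(LZ/Z)\cap\Gamma_0(A)$ acting irreducibly. The divisibility statements $d=|SZ/Z|\mid n$ and $(p^n-1)/(p^{n/d}-1)\mid|L_0/Z|$ then drop out of Lemma~\ref{semilinear0} applied with $\delta=\{s\}$ and $D$ a Sylow $s$-subgroup. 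The identity $Z=\cent{LZ}A$ uses that $Z\le\zent G$ centralizes everything while $L_0/Z$ acts fixed-point-freely, so nothing outside $Z$ centralizes all of $A$. Part (c) is then immediate from Zsigmondy's theorem (quoted in the preliminaries): a primitive prime divisor $t$ of $p^n-1$ exists unless $(p,n)=(2,6)$ or $n=2$ with $p+1$ a $2$-power, and both exceptions are excluded because $s\mid n$ with $s\neq p$ would, in those cases, force $s$ to divide $(p^n-1)/(p^{n/s}-1)$, contradicting $s\nmid|L_0/Z|$ (the same argument already used inside Lemma~\ref{lemmaB}).

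Finally, for (d), I would identify $\fit{LZ/Z}$ with $L_0/Z$ via Lemma~\ref{ppd}: taking $T_0$ of order $t$ (the primitive prime divisor from (c)) inside $L_0/Z$, that lemma gives $\cent{}{T_0}=\fit{}=(LZ/Z)\cap\Gamma_0(A)=L_0/Z$. The factorization $LZ=L_0S$ follows because $LZ/Z$ modulo its Fitting subgroup $L_0/Z$ is a subgroup of $\Gamma(A)/\Gamma_0(A)$, which is cyclic of order dividing $n$, and $S$ surjects onto it by construction of $L$ as the normal closure of $QS$; and $p\nmid|LZ/Z|$ holds since $L_0/Z\le\Gamma_0(A)$ has order dividing $p^n-1$ while the cyclic quotient has order dividing $n$, neither divisible by $p$. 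The main obstacle I anticipate is the first step of part (a): extracting $[\cent EQ,S]=1$ cleanly from non-adjacency, since this requires a careful Clifford-theoretic argument (examining inertia groups of characters of $E$ and invoking Gallagher/Ito--Michler) rather than a purely module-theoretic one, and one must be vigilant that $Z\le\zent G$ is handled correctly so that the relevant actions factor through $LZ/Z$.
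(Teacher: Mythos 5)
Your architecture coincides with the paper's: the coprime decomposition $E=[E,Q]\times\cent EQ$, a Clifford--Gallagher--Ito--Michler dictionary turning non-adjacency of $q$ and $s$ into normal Sylow $s$-subgroups inside inertia groups, then Lemmas~\ref{semilinear1}, \ref{semilinear0}, Zsigmondy, and Lemma~\ref{ppd} for parts (b), (c), (d). But the step you yourself flag as the main obstacle --- $[\cent EQ,S]=1$ --- is genuinely incomplete as sketched, and what is missing is an idea, not bookkeeping. For $\beta\in\widehat{B}$ (where $B=\cent EQ$) moved by $S$, the dictionary applied to $\alpha\times\beta$ with $\alpha\in\widehat{A}\setminus\{1\}$ only gives that \emph{some} $H$-conjugate $S^h$ lies normally in $\cent H{\alpha\times\beta}$; this does not contradict $S\not\le\cent H\beta$, since the conjugate may vary with $\alpha$ and $\beta$. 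The paper closes this by a two-stage argument: first fix one $\alpha\in\widehat{A}\setminus\{1\}$ whose inertia group contains $S$ itself as a (necessarily unique, because normal) Sylow $s$-subgroup; then for arbitrary $\beta\in\widehat{B}$ the conjugate $S^h$ lying in $\cent H{\alpha\times\beta}\le\cent H\alpha$ must equal $S$ by that uniqueness, forcing $S\le\cent H\beta$ for \emph{every} $\beta$. Without this uniqueness trick your ``manufacture a degree divisible by $qs$'' step does not go through.

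There is also a dual-module conflation in (b): the Clifford dictionary produces the fixed-point hypothesis for characters, i.e.\ on $\widehat{A}$, whereas the statement of the lemma (and your input to Lemma~\ref{semilinear1}) concerns elements $a\in A$. The paper works with $\widehat{A}$ throughout and only at the end transfers the conclusions to $A$, using that condition (c) of Lemma~\ref{semilinear0} is purely numerical and hence shared by a module and its dual; some such transfer is needed and is absent from your plan. Your justification of $Z=\cent{LZ}A$ is moreover circular --- invoking the fixed-point-free action of $L_0/Z\le\Gamma_0(A)$ presumes the faithful semilinear embedding, which is what is being proved; the correct route is $\cent HE=Z$, coming from $\fit G=E\times Z$ together with $\cent G{\fit G}\le\fit G$. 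Finally, in (d): the factorization $LZ=L_0S$ requires $QZ/Z\le L_0/Z$, else the image of $Q$ could generate part of the cyclic quotient $LZ/L_0$; you never prove this (it follows, for instance, from $QZ/Z\le\fit{H/Z}\cap (LZ/Z)\le\fit{LZ/Z}=L_0/Z$, or by the paper's primitive-prime-divisor argument). And your reason for $p\nmid|LZ/Z|$ is wrong as stated: the cyclic quotient has order dividing $n$, but $n$ may well be divisible by $p$ (e.g.\ $p=2$, $n=6$); the correct reason is that, once $LZ=L_0S$ is established, that quotient is an $s$-group and $s\ne p$.
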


\begin{proof}
Set \(A=[E, Q]\) and \(B=\cent EQ\).
As $q$ does not divide $|E|$ and $E$ is abelian, we have $E = A \times B$. 
Consider now the action of $G$ on the dual group $\widehat{E} = \widehat{A} \times \widehat{B}$. 
For $\alpha \in \widehat{A}\setminus\{1\}$, $q$
divides $|G:\cent G{\alpha}| = |H:\cent H\alpha|$. Also, the linear character $\alpha$ extends to
$\cent G{\alpha}$, because $A$ has  a complement (namely $B \cent H{\alpha}$)  in $\cent G{\alpha}$. 
Thus, by Gallagher's Theorem and Clifford Correspondence, this forces $\cent H\alpha \simeq \cent G{\alpha}/E$  to contain an \(H\)-conjugate 
of $S$ as a normal subgroup (and also, $S$ is abelian).
 Let $\alpha \in \widehat{A}\setminus\{1\}$ be such that $S \leq
\cent H\alpha$ and let $\beta \in \widehat{B}$; then $\cent H{\alpha \times \beta}
= \cent H \alpha \cap \cent H \beta$ and $q$ divides $|H:\cent H{\alpha\times
\beta}|$. As $\alpha \times \beta$ extends to its inertia subgroup in \(G\), using as above Clifford 
Theory and that no irreducible character of $G$ has degree divisible by $qs$, we get that
 the unique Sylow $s$-subgroup $S$ of $\cent H \alpha$
must also be contained in $\cent H\beta$. We conclude that $S$ acts
trivially on $\widehat{B}$ and hence that  $S \leq \cent H{B} \nor H$. Thus $L =
QS^H \leq \cent H{B}$, so that \(B=\cent EL\). Moreover, we get \([E,L]=[A\times B,L]=[A,L]\leq A\), hence $A = [E,L]$ and (a) is proved.

Next, observe that $\cent HE = Z$: in fact, if $x \in \cent HE$, then $x$ centralizes $E Z = \fit G$, 
so $x \in \fit G \cap H = Z$. Thus, it follows that $Z = \cent{LZ}A$.
Since all Sylow $s$-subgroups of $H$ are contained in $L$ and $Z \leq \zent G$, we have that $\cent{LZ}{\alpha}$ contains an
\(L\)-conjugate of $S$ as a normal subgroup, for every $\alpha\in \widehat{A}\setminus\{1\}$. 
Hence, as $s$ is coprime to $|\widehat{A}|$,  an application of
Lemma~\ref{semilinear1} together with Lemma~\ref{semilinear0} yields that  
\(\widehat{A}\) (thus \(A\)) is an elementary abelian \(p\)-group of order \(p^n\), where \(p\) is a suitable prime and \(n\) a suitable integer; moreover, setting $\overline{H} = H/Z$, we get $\overline{L} \leq \Gamma(\widehat{A})$ and $\overline{L_0} = \overline{L} \cap \Gamma_0(\widehat{A})$ acts irreducibly  on $\widehat{A}$. 
We also have that \(s\) does not divide \(|\overline{L_0}|\), whereas $d = |\overline{S}|$ divides
\(n\), and \((p^n-1)/(p^{n/d}-1)\) divides \(|\overline{L_0}|\).

As in the proof of Lemma~\ref{lemmaB},  there exists a primitive prime divisor $t$ of  
$p^n -1$.
Otherwise, either $n = 2$ or $p^n = 2^6$. In both cases, as $s$ and $p$ are distinct primes,  $s$ divides $(p^n -1)/(p^{n/d}-1)$, so $s$ divides $|\overline{L_0}|$, a contradiction. This proves (c), 
and Lemma~\ref{ppd} yields \(L_0/Z=\fit{LZ/Z}\).
In order to conclude the proof of (d), it remains to show that $LZ = L_0S$.

 Clearly \(t\) divides
$(p^n -1)/(p^{n/d}-1)$, hence it divides \(|\overline{L_0}|\). 
Denoting by $\o{T_0}$ the subgroup of  \(\overline{L_0}\) with $|\o{T_0}| = t$,  
by Lemma~\ref{ppd} it follows that $\cent{\overline{L}}{\overline{T_0}} = \overline{L_0}$.
Note that
\(t\) is larger than \(n\) and hence, as \(|\overline{L}/\overline{L_0}|\) divides  \(n\), we get
that   a Sylow $t$-subgroup of $\overline{L}$ is contained in $ \overline{L_0}\).
This implies that \(\overline{Q}\leq \overline{L_0}\), since  $\overline{Q} \nor \overline{H}$ centralizes $\overline{T_0}$.
But now both \(\overline{Q}\) and \(\overline{S}\) lie in \(\overline{L_0S}\);
moreover, as \(\overline{L}/\overline{L_0}\) is cyclic,  \(\overline{L_0S}\) is normal in
\(\overline{H}\). So \(\overline{L}=\overline{L_0S}\) and 
 hence $LZ = L_0S$. Recalling that $s \neq p$ and  $|\o{L_0}|$ divides $p^n -1$, we have also
that \(p\) does not divide \(|LZ/Z|\), and the proof of (d) is complete.

In particular,   the actions of \(\overline L\) on \(A\) and on \(\widehat{A}\) are isomorphic and
also (b) is proved. 
\end{proof}

\begin{lemma}
\label{lemma8} Let $G$ be a solvable group, and assume that $\fit G = M\times U$, where $M\nor G$ is a non-abelian
$p$-group, \(U\nor G\) is abelian, and $p$ does
not divide $|G:\fit G|$. If every irreducible character of $U$ has
an extension to its inertia subgroup in $G$, then either $U\leq\zent
G$ or $d_G(p,v)\leq 2$ for every $v\in\V G$.
\end{lemma}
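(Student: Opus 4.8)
The plan is to read off all adjacencies in \(\Delta(G)\) from the action of \(G\) on \(\irr{\fit G}\). Since \(\fit G=M\times U\), we have \(\irr{\fit G}=\{\theta\times\mu:\theta\in\irr M,\ \mu\in\widehat U\}\), and for such a character \(I_G(\theta\times\mu)=I_G(\theta)\cap I_G(\mu)\); by Clifford's theorem every \(\xi\in\irr{G\mid\theta\times\mu}\) has degree divisible by the orbit size \(|G:I_G(\theta\times\mu)|\) and by \((\theta\times\mu)(1)=\theta(1)\). Because \(M\) is non-abelian, \(p\in\V G\). Assuming \(U\not\leq\zent G\), I would fix once and for all a character \(\lambda_0\in\widehat U\) with \(I_G(\lambda_0)<G\) and a prime \(e_0\mid |G:I_G(\lambda_0)|\); the goal is then to prove \(d_G(p,v)\leq 2\) for every \(v\in\V G\).

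First I would record the basic device \((\ast)\): every prime dividing \(|G:I_G(\mu)|\), for some \(\mu\in\widehat U\), is adjacent to \(p\). Indeed, choosing a non-linear \(\theta\in\irr M\) and any \(\xi\in\irr{G\mid\theta\times\mu}\), the degree \(\xi(1)\) is divisible both by \(\theta(1)\) (hence by \(p\)) and by \(|G:I_G(\theta\times\mu)|\), which is a multiple of \(|G:I_G(\mu)|\). The same computation shows that any prime dividing \(|G:I_G(\theta)|\) for a non-linear \(\theta\in\irr M\) is adjacent to \(p\); in particular \(e_0\) is adjacent to \(p\). Finally \(\lambda_0\) supplies a \emph{bridge} for linear characters: if \(\theta\in\irr M\) is linear and \(v\mid |G:I_G(\theta)|\), then any \(\xi\in\irr{G\mid\theta\times\lambda_0}\) has degree divisible by \(|G:I_G(\theta)\cap I_G(\lambda_0)|\), hence by both \(v\) and \(e_0\); thus \(v\) and \(e_0\) are adjacent and \(d_G(p,v)\leq 2\) along \(p - e_0 - v\).

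Granting that every vertex \(v\neq p\) divides some orbit size \(|G:I_G(\nu)|\) with \(\nu\in\irr{\fit G}\), the proof is finished: writing \(\nu=\theta\times\mu\) and using that \(|G:I_G(\nu)|\) divides \(|G:I_G(\theta)|\cdot|G:I_G(\mu)|\), one of \(v\mid|G:I_G(\mu)|\) or \(v\mid|G:I_G(\theta)|\) holds, and the three cases of the previous paragraph each yield \(d_G(p,v)\leq 2\). The hard part, and the step I expect to be the main obstacle, is precisely this claim that no vertex is ``invisible'' to the action on \(\irr{\fit G}\). I would argue it by contradiction: if \(v\in\V G\) divided no orbit size, then for \(V_0\in\syl v G\) any \(\nu\) moved by \(V_0\) would have \(v\mid|G:I_G(\nu)|\), so \(V_0\) fixes every irreducible character of \(\fit G\). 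Fixing all of \(\widehat U\) gives \(V_0\leq\cent G U\), while fixing all linear characters of \(M\) means \(V_0\) centralises \(M/M'\); as \(v\neq p\), the coprime action on the \(p\)-group \(M\) upgrades triviality on \(M/M'\) (hence on \(M/\frat M\)) to triviality on \(M\), giving \(V_0\leq\cent G M\). Therefore \(V_0\leq\cent G{\fit G}=\zent{\fit G}\leq\fit G\), so \(V_0\) is a normal abelian Sylow \(v\)-subgroup of \(G\); by Ito's theorem \(v\nmid\chi(1)\) for every \(\chi\in\irr G\), contradicting \(v\in\V G\).

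I should note that this route uses only the divisibility \(|G:I_G(\nu)|\mid\chi(1)\) coming from Clifford's theorem, and not the standing hypothesis that each character of \(U\) extends to its inertia subgroup; I would expect the latter to enter through the sharper factorisation \(\chi(1)=|G:I_G(\mu)|\,\psi(1)\) with \(\psi\in\irr{I_G(\mu)/U}\), which is convenient for tracking vertices through the sections \(I_G(\mu)/U\) in the way the lemma is applied later on.
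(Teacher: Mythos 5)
Your orbit-size bookkeeping (the device $(\ast)$ and the bridge through $e_0$) is correct as far as it goes, but the claim the whole proof hangs on --- that every vertex $v\neq p$ of $\Delta(G)$ divides some orbit size $|G:I_G(\nu)|$ with $\nu\in\irr{\fit G}$ --- is false, even with all the hypotheses of the lemma in force. Take $M$ extraspecial of order $7^3$ and $G=M\times S_4$, so that $\fit G=M\times U$ with $U=V_4$ the Klein subgroup of $S_4$, $p=7\nmid|G:\fit G|=6$, and $U\not\leq\zent G$; the extension hypothesis holds, since the inertia subgroup of a nontrivial $\mu\in\widehat U$ is $M\times D$ with $D\in\syl 2{S_4}$ and $D/U$ cyclic. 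Because $S_4$ centralizes $M$, every inertia subgroup $I_G(\theta\times\mu)$ equals $M\times I_{S_4}(\mu)$, so the orbit sizes on $\irr{\fit G}$ are only $1$ and $3$; yet $2\in\V G$, as $14\in\cd G$. The vertex $2$ enters $\chi(1)=e\,|G:I_G(\nu)|\,\nu(1)$ only through the ramification factor $e$ (concretely, through the degree $2$ of $G/\fit G\cong S_3$), and that part of the degree is invisible to your accounting: in general $\cd{G/\fit G}\subseteq\cd G$, and those degrees sit over the trivial orbit. This is also exactly why your closing remark --- that the extension hypothesis is never used --- should have been a warning sign: in the paper's proof that hypothesis is what controls $e$, via Gallagher's theorem (degrees over $\phi\in\widehat U$ are $|G:\cent G{\phi}|\lambda(1)$ with $\lambda\in\irr{\cent G{\phi}/U}$) followed by Ito--Michler applied to $\cent G{\phi}/U$; this is the engine producing normal abelian Sylow subgroups of the inertia quotients, and no argument that ignores $e$ can prove the lemma.

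Independently of this, two individual steps are wrong as stated. First, in your proof of the key claim you infer that if no orbit size is divisible by $v$, then a fixed $V_0\in\syl vG$ fixes every $\nu\in\irr{\fit G}$; but ``$v\nmid|G:I_G(\nu)|$'' only says that $I_G(\nu)$ contains \emph{some} Sylow $v$-subgroup of $G$, not $V_0$ itself, so $V_0$ is merely guaranteed a fixed point in each orbit. (Already $S_3\cong\GL(2,2)$ acting naturally on $C_2\times C_2$ has all orbit sizes odd, yet no involution acts trivially --- and this is precisely the action sitting inside the counterexample above.) Second, the divisibility $|G:I_G(\theta)\cap I_G(\mu)|\,\big|\,|G:I_G(\theta)|\cdot|G:I_G(\mu)|$ fails in general: equivalently, two subgroups may each contain a Sylow $v$-subgroup of $G$ while their intersection contains none (two distinct subgroups of order $2$ in $S_3$). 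For comparison, the paper's route avoids all of this: using extension, Gallagher and Ito--Michler, it first shows that any vertex \emph{not} adjacent to $p$ has all of its Sylow subgroups inside $\cent GU$; then, assuming $U\not\leq\zent G$, it picks a prime $w$ dividing $|\fit{G/\cent GU}|$ and proves that $w$ is adjacent to $p$ and to every vertex that is not adjacent to $p$, which gives $d_G(p,v)\leq 2$ at once.
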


\begin{proof} Our first claim is
that, if \(t\in\V G\setminus\{p\}\) is not adjacent to \(p\), then
every Sylow \(t\)-subgroup of \(G\) centralizes \(U\). In fact, let
\(\theta\) be in \(\irr M\). Setting \(P=M\times \oh p U\), we have
that \(\theta\times 1_{\oh p U}\in\irr P\) extends to
\(I_G(\theta\times 1_{\oh p U})=I_G(\theta)\) because
\(p\nmid|G:P|\), and therefore \(\theta\) extends to
\(I_G(\theta)\). Now, denoting by \(K\) a complement for \(M\) in
\(G\) containing \(U\), the degrees of the characters in
\(\irr{G|\theta}\) are of the kind
\(|G:I_G(\theta)|\cdot\theta(1)\cdot\lambda(1)\), where
\(\lambda\in\irr {I_K(\theta)}\). As a consequence, if \(\theta\) is
chosen to be non-linear, \(I_K(\theta)\) contains a Sylow
\(t\)-subgroup \(T\) of \(G\), and \(T\) is abelian and normal in
\(I_K(\theta)\). Now, \(U\) is a nilpotent normal subgroup of
\(I_K(\theta)\), thus \([U,T]=1\) as wanted.

Now, let us assume \(\cent G U\neq G\) and let \(w\) be a prime
divisor of \(\fit{G/\cent G U}\); observe that, by the previous
paragraph, \(w\) is a vertex of \(\Delta(G)\) which is adjacent to
\(p\). Also, let \(s\in \V G\) be non-adjacent to \(w\). In this
setting, we claim that \(s\) is adjacent to \(p\) in
\(\Delta(G)\). In fact, we can certainly find \(\phi\in\widehat{U}\) such
that \(w\) divides \(|G:\cent G{\phi}|\). Thus \(\cent G{\phi}\) contains a
Sylow \(s\)-subgroup \(S\) of \(G\); by our assumptions, \(\phi\)
extends to \(\cent G{\phi}\), and so \(\cent G{\phi}/U\) has an abelian
normal Sylow \(s\)-subgroup. It follows that \(SU\nor \cent G{\phi}\)
and  \(\syl s{\cent G{\phi}}=\{S^u\;|\;u\in U\}\). Take
now any \(\theta\in\irr M\); we get that \(I_G(\theta\times\phi)\)
(thus \(I_G(\theta)\)) contains a Sylow \(s\)-subgroup \(S^u\) of
\(G\) for some \(u\in U\), but then \(I_G(\theta)\) contains \(S\)
because \(U\leq I_G(\theta)\). We conclude that \(S\) centralizes
every irreducible character of \(M\), thus it centralizes \(M\) by
coprimality. But this forces \(S\not\leq\cent G U\), which yields
that \(s\) is adjacent to \(p\) in view of the previous paragraph.

To sum up, under the assumption \(U\not\leq\zent G\), we proved the
existence of \(w\in\V G\) which is adjacent in $\Delta(G)$ to \(p\) and to every vertex of
\(\Delta(G)\) not adjacent to \(p\). It easily follows
that every vertex of \(\Delta(G)\) can be reached from \(p\) through
a path of length at most \(2\), and the proof is complete.
\end{proof}

\begin{lemma}
\label{hyper} Let \(G\) be a group, \(p\) a prime, and \(P\) a
normal \(p\)-subgroup of \(G\). If \(G/\cent G P\) is a \(p\)-group,
then \(P\) is a hypercentral subgroup of \(G\).
\end{lemma}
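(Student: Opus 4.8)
The plan is to reduce the statement to a claim about chief factors: I will show that every chief factor of $G$ that lies inside $P$ is centralized by $G$. Since $P\nor G$, this is exactly the condition ensuring that the terms of the upper central series of $G$ eventually absorb $P$, i.e.\ that $P\leq\hyperzent G$. So the whole proof reduces to analyzing the $G$-action on a single chief factor.

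First I would fix a chief series of $G$ refining the normal series $1\nor P\nor G$ and concentrate on the factors $H/K$ with $K\nor H\leq P$. Each such factor is a section of the $p$-group $P$, hence an elementary abelian $p$-group, which I view as a vector space over $\GF p$; moreover, being a chief factor, $H/K$ is an irreducible $G$-module. Then I would consider the conjugation action of $G$ on $H/K$: the subgroup $\cent G P$ acts trivially, so the action factors through $G/\cent G P$, which is a $p$-group by hypothesis. Thus the image of $G$ in $\GL(H/K)$ is a $p$-group acting on a nonzero $\GF p$-vector space.

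The key step is now the standard fixed-point property: a $p$-group acting on a nontrivial $\GF p$-module has a nonzero fixed vector. Hence the subspace of $G$-fixed points of $H/K$ is nonzero; being a $G$-submodule of the irreducible module $H/K$, it must coincide with $H/K$. Therefore $G$ centralizes $H/K$ (and in fact $|H/K|=p$), so this is a central chief factor.

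Since this holds for every chief factor of $G$ inside $P$, all of them are central, and a routine induction along the chief series shows that $P$ is contained in some term of the upper central series of $G$, giving $P\leq\hyperzent G$ as desired. I do not expect a real obstacle here: the only ingredients are the fixed-point lemma for $p$-groups acting on $p$-groups and the characterization of the hypercenter via central chief factors, both of which are elementary; the argument is essentially a bookkeeping of these two facts.
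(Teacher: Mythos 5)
Your proof is correct, but it is organized differently from the paper's. The paper argues by induction on $|G|$: it applies the fixed-point counting argument once, to $P$ itself, to get $\zent G\cap P\neq 1$, then passes to $\overline G=G/\zent G$ (where $\overline P$ is still normal and $\overline G/\cent{\overline G}{\overline P}$ is still a $p$-group) and invokes $\hyperzent{\overline G}=\overline{\hyperzent G}$ to conclude. You instead fix a chief series of $G$ through $P$, apply the same fixed-point lemma to each chief factor $H/K\leq P$ (whose $G$-action factors through the $p$-group $G/\cent GP$), and use irreducibility to conclude each such factor is central of order $p$, then assemble via the elementary fact that a normal subgroup all of whose chief factors below it are $G$-central lies in the hypercenter. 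Both routes rest on the same engine, namely that a $p$-group acting on a nontrivial finite $p$-group has nontrivial fixed points; the trade-off is that the paper's version is shorter and works directly with the definition of the upper central series, while yours avoids induction on the group order and yields the slightly sharper byproduct that every $G$-chief factor inside $P$ has order exactly $p$. All the facts you rely on (chief factors of a $p$-group are elementary abelian and irreducible as $G$-modules, the fixed-point lemma, and the chief-factor characterization of hypercentral embedding) are standard and correctly applied, so there is no gap.
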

\begin{proof} Since \(G/\cent G P\) is a \(p\)-group, the number of elements
 in \(P\) that are fixed under the action of \(G/\cent G P\) is  divisible by \(p\) (unless \(P\) is trivial, in which case there is nothing to prove),
and hence \(\zent G\cap P\neq 1\); in particular, \(\zent G\) is non-trivial.
 Consider now the factor group \(\overline{G}=G/\zent G\) and adopt the bar convention throughout; clearly \(\overline{P}\) is a normal \(p\)-subgroup
 of \(\overline{G}\) and \(\overline{G}/\cent{\overline{G}}{\overline{P}}\)
is a \(p\)-group (because it is isomorphic to a quotient of \(G/\cent G P\)), therefore we can use induction on the order of the group and conclude that
  \(\overline{P}\leq\hyperzent{\overline{G}}\). The claim now follows, as \(\hyperzent{\overline{G}}=\overline{\hyperzent G}\).
\end{proof}

\begin{lemma}
\label{hyper2} Let \(G\) be a solvable group, and \(p\) a prime.
Setting \(P=\oh p G\) and \(N=P'\), assume that \(P\) is
non-abelian, \(P/N\leq\zent{G/N}\), and that  every irreducible
character of \(P\) has an extension to its inertia subgroup in
\(G\). Then \(p\) is a complete vertex in \(\Delta(G)\).
\end{lemma}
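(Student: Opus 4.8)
The plan is to show directly that, for every vertex \(q\neq p\) of \(\Delta(G)\), some irreducible character degree of \(G\) is divisible by \(pq\). The structural engine is the observation that \(G/\cent GP\) is a \(p\)-group. Indeed, from \(P/N\leq\zent{G/N}\) we get \([P,G]\leq N=P'\leq\frat P\), so every element of \(G\) acts trivially on \(P/\frat P\); by coprime action each \(p'\)-element of \(G\) then centralizes \(P\), and hence \(G/\cent GP\) is a \(p\)-group. Two consequences will be used repeatedly: for every \(\theta\in\irr P\) the subgroup \(\cent GP\) (which fixes \(\theta\)) is contained in \(I_G(\theta)\), and \(|G:I_G(\theta)|\) is a power of \(p\). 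Since \(P\) is non-abelian it carries a non-linear irreducible character, so in particular \(p\in\V G\).

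Next I fix a vertex \(q\neq p\), pick \(\chi\in\irr G\) with \(q\mid\chi(1)\), and let \(\theta_0\) be an irreducible constituent of \(\chi_P\). If \(\theta_0\) is non-linear, then \(\theta_0(1)\) is a positive power of \(p\) dividing \(\chi(1)\), whence \(pq\mid\chi(1)\) and we are done. The only real work is the case in which \(\theta_0\) is linear: here \(\theta_0\in\irr{P/P'}\), and since \(G\) acts trivially on \(P/P'\) (again because \([P,G]\leq P'\)) the character \(\theta_0\) is \(G\)-invariant, so by hypothesis it extends to \(G\); Gallagher's theorem then gives \(\chi=\hat{\theta_0}\beta\) with \(\beta\in\irr{G/P}\) and \(\chi(1)=\beta(1)\), so that \(q\mid\beta(1)\). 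The difficulty is that \(q\) now lives on a character inflated from \(G/P\), a priori unrelated to the \(p\)-divisible degrees of \(G\), which all arise over non-linear characters of \(P\); and one cannot simply multiply a \(G\)-invariant non-linear character of \(P\) by \(\beta\), since in general no such character need exist.

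To finish I would transport the prime \(q\) onto a degree that is simultaneously divisible by \(p\). Fix any non-linear \(\theta\in\irr P\), set \(I=I_G(\theta)\supseteq\cent GP\), and let \(\hat\theta\in\irr I\) be an extension of \(\theta\) (by hypothesis). Restricting \(\beta\) first to the normal subgroup \(\cent GP\): by Clifford theory \(\beta(1)/\delta(1)\) divides \(|G:\cent GP|\) for an irreducible constituent \(\delta\) of \(\beta_{\cent GP}\), and this index is a power of \(p\); hence \(q\mid\delta(1)\). Writing \(\beta_I=\sum_i a_i\gamma_i\) with \(\gamma_i\in\irr{I/P}\) (the \(\gamma_i\) are trivial on \(P\) because \(\beta\) is), and using \(\beta_{\cent GP}=(\beta_I)_{\cent GP}\), the character \(\delta\) occurs in \((\gamma_k)_{\cent GP}\) for some \(k\); since \(\cent GP\nor I\), this forces \(\delta(1)\mid\gamma_k(1)\), so \(q\mid\gamma_k(1)\). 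Finally \(\hat\theta\gamma_k\in\irr I\) lies over \(\theta\), so \((\hat\theta\gamma_k)^G\in\irr G\) by Clifford correspondence, of degree \(|G:I|\,\theta(1)\,\gamma_k(1)\), which is divisible by \(p\) (as \(\theta\) is non-linear) and by \(q\). Thus \(pq\) divides a degree of \(G\) and \(p\) is adjacent to \(q\). The crux of the whole argument is the reduction to \(G/\cent GP\) being a \(p\)-group, together with the routing of \(q\) through \(\cent GP\) up into some inertia subgroup \(I_G(\theta)\); everything else is bookkeeping with Clifford theory and Gallagher's theorem.
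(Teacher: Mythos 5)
Your proof is correct, but it takes a genuinely different route from the paper's. Both arguments open with the same structural engine — from \([P,G]\leq N\leq\frat P\) and coprime action one gets that \(G/\cent GP\) is a \(p\)-group — but from there the paper argues by \emph{contradiction} and structurally: assuming a vertex \(s\) is non-adjacent to \(p\), it uses the extension hypothesis plus Gallagher and Clifford to place a Sylow \(s\)-subgroup \(S\) of \(G\) as an abelian normal subgroup of \(I_G(\theta)/P\) for a non-linear \(\theta\in\irr P\); then, invoking the hypercentrality of \(P\) (Lemma~\ref{hyper}), the nilpotency of \(SP\), and the subnormality of \(I_G(\theta)\) in \(G\) (again because \(G/\cent GP\) is a \(p\)-group), it concludes \(SP\leq\fit G\), so \(S\) is abelian and normal in \(G\) and \(s\notin\V G\) by Ito's theorem — a contradiction. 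You instead argue \emph{directly}: for a vertex \(q\neq p\), either \(q\) divides a degree lying over a non-linear character of \(P\) (then \(pq\) divides it at once), or \(q\) divides \(\beta(1)\) for some \(\beta\in\irr{G/P}\); your key move is to route the \(q\)-divisibility through the normal subgroup \(\cent GP\) — since \(|G:\cent GP|\) is a \(p\)-power, the constituents of \(\beta_{\cent GP}\) still have degree divisible by \(q\), hence so do the constituents \(\gamma_k\in\irr{I_G(\theta)/P}\) of \(\beta_{I_G(\theta)}\) — and then \((\hat\theta\gamma_k)^G\) is an irreducible character of degree \(|G:I_G(\theta)|\,\theta(1)\,\gamma_k(1)\), divisible by \(pq\). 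Your parenthetical worry is well placed: one cannot simply multiply \(\beta\) by a \(G\)-invariant non-linear character of \(P\), since none need exist, and the detour through \(\cent GP\) is exactly what repairs this. The trade-off: your argument is constructive (it exhibits a degree witnessing each edge at \(p\)) and needs only Clifford theory, Gallagher, and the divisibility \(\chi(1)/\delta(1)\mid|G:M|\) for \(M\nor G\), avoiding Ito--Michler, the hypercenter lemma, and the Fitting-subgroup argument entirely; the paper's proof is shorter given Lemma~\ref{hyper} (which it needs elsewhere anyway) and yields the structural dividend that any missing edge at \(p\) would force an abelian normal Sylow subgroup.
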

\begin{proof}
Note that, if \(H\) is a $p$-complement of \(G\), then \(H\)
centralizes \(P/N\) and hence it centralizes \(P\) by coprimality;
this yields that \(G/\cent G P\) is a \(p\)-group, thus \(P\) is
hypercentral in \(G\) by Lemma~\ref{hyper}.

Working by contradiction, we assume that $p$ is not a complete vertex of
$\Delta(G)$ and we consider a vertex \(s \in \V G\), $s \neq p$,
such that \(ps\) does not divide any
irreducible character degree of \(G\), and let \(\theta\) be in
\(\irr P\). Since \(\theta\) extends to \(I=I_G(\theta)\), the
degrees of the characters in \(\irr{G|\theta}\) are of the kind
\(|G:I|\cdot\theta(1)\cdot\lambda(1)\), where \(\lambda\in\irr
{I/P}\). As a consequence, if \(\theta\) is chosen to be non-linear,
\(I/P\) contains $SP/P$ where $S$ is a suitable Sylow $s$-subgroup
of $G$ (recall that \(|G:I|\) is a \(p\)-power), and \(SP/P\) is
abelian and normal in \(I/P\); here $S$ is in fact abelian, as
$S\simeq SP/P$. Now, as \(P\leq\hyperzent G\), the nilpotency of
\(SP/P\) yields the nilpotency of \(SP\); moreover, \(SP\) is normal
in \(I\), which is in turn subnormal in \(G\) because \(I/\cent G
P\) is a subgroup of the \(p\)-group \(G/\cent G P\). We conclude
that \(SP\) is a nilpotent subnormal subgroup of \(G\), whence it
lies in \(F=\fit G\). To sum up, \(S\) is a Sylow \(s\)-subgroup of
\(G\) which is abelian and normal in \(G\) (as \(S\leq F\)), hence
\(s\) is not a vertex of                                                                                                                                                                                                                                        \(\Delta(G)\), a contradiction; in other words, every prime
in \(\V G\setminus\{p\}\) is adjacent to \(p\) in \(\Delta(G)\), as
wanted.
\end{proof}

\begin{proposition}
\label{prop9'} Let $G$ be a solvable group such that \(\Delta(G)\)
is connected of diameter $3$, and let $p$ be a prime. Setting $P=\oh p G$, assume
that $P$ is non-abelian, $P'$ is a minimal normal subgroup of $G$ and that $\Delta(G/P')$ is a disconnected
graph. Then $P$ is the Sylow $p$-subgroup of $G$ and $P/P'$ is not contained in the
center of $G/P'$.
\end{proposition}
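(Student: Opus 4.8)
The plan is to work in $\overline G=G/P'$ and combine the classification of disconnected graphs (Theorem~\ref{class}) with the hypercentrality lemmas. Write $N=P'$. Since $N$ is a minimal normal subgroup of the solvable group $G$, it is an elementary abelian $p$-group, and being a non-trivial normal subgroup of the $p$-group $P$ it meets $\zent P$ non-trivially; minimality then forces $N\le\zent P$, so $P$ has nilpotency class $2$. I first record that $\overline P:=P/N=\oh p{\overline G}$ (the full preimage in $G$ of $\oh p{\overline G}$ is a normal $p$-subgroup, hence lies in $P=\oh pG$) and that $\overline P$ is abelian. Because $\Delta(\overline G)$ is disconnected, Lemma~\ref{lewis} gives a unique prime $q$ with $\oh q{\overline G}\not\le\zent{\overline G}$. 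The two assertions to be proved are then equivalent to: $q=p$ (this is exactly $P/N\not\le\zent{G/N}$), and $\overline P\in\syl p{\overline G}$ (since $N\le P$, this is equivalent to $P\in\syl pG$).

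For the first assertion I argue by contradiction, assuming $P/N\le\zent{G/N}$, i.e. $[P,G]\le N$. Let $H$ be a $p$-complement of $G$. From $[P,H]\le[P,G]\le N\le\zent P$ and the Three Subgroups Lemma I obtain $[P',H]=[[P,P],H]=1$, whence by coprimality $[P,H]=[P,H,H]\le[N,H]=1$; thus every $p$-complement centralises $P$ and $G/\cent GP$ is a $p$-group. Moreover $\cent GN\supseteq PH$, so $G/\cent GN$ is a $p$-group acting on the irreducible module $N$; a non-trivial $p$-group has non-zero fixed points, so minimality of $N$ forces $N\le\zent G$, and Lemma~\ref{hyper} gives $P\le\hyperzent G$. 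Now the aim is to contradict $\diam{\Delta(G)}=3$ by showing $p$ is a complete vertex, via Lemma~\ref{hyper2}: its hypotheses $P$ non-abelian and $P/N\le\zent{G/N}$ hold, and the needed extension property of $\irr P$ is available because all $p$-complements centralise $P$, so that $\cent GP\le I_G(\theta)$ with $|G:I_G(\theta)|$ a $p$-power for every $\theta\in\irr P$. Concretely, for a non-linear $\theta\in\irr P$ and any $s\in\rho(G)\setminus\{p\}$, choosing $\eta\in\irr H$ with $s\mid\eta(1)$ (the $p'$-parts of the degrees of $G$ being read off $H$, as $G/\cent GP$ is a $p$-group), the character $\theta\times\eta$ of $P\times H=PH$ produces a character of $G$ of degree divisible by $ps$. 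Hence $p$ is complete and $\diam{\Delta(G)}\le2$, a contradiction; therefore $q=p$.

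For the second assertion, $\overline P=\oh p{\overline G}$ is now non-central and abelian, so among the cases of Theorem~\ref{class} only case (b) can occur (cases (a) and (c) require the unique non-central $\oh q{\overline G}$ to be non-abelian): thus $\overline M:=\overline P$ is elementary abelian, $\fit{\overline G}=\overline M\times\zent{\overline G}$ with $\zent{\overline G}$ a $p'$-group, and $\overline G/\fit{\overline G}\le\Gamma(\overline M)$. By Ito--Michler, $\overline P\in\syl p{\overline G}$ is equivalent to $p\notin\rho(\overline G)$. Suppose, for contradiction, that $p\in\rho(\overline G)$; since $\zent{\overline G}$ is a $p'$-group and $\overline M\cap\zent{\overline G}=1$, the only source of $p$ is the semilinear quotient, so $p$ lies in the component $\pi(\overline G/\fitt{\overline G})$ of $\Delta(\overline G)$. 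The plan is once more to force $p$ to be complete in $\Delta(G)$, combining two facts: every character of $\overline G$ lying over a non-trivial character of $\overline M$ has degree divisible by the whole other component $\pi(\fitt{\overline G}/\fit{\overline G})$ (an orbit–size computation using that $\fitt{\overline G}/\fit{\overline G}$ acts fixed-point-freely on $\overline M$), while every character of $G$ non-trivial on $N$ has degree divisible by $p$ (it lies over a non-linear $\theta\in\irr P$). Transporting the first family through the action on $\widehat N$ to build characters of $G$ non-trivial on $N$ whose degrees carry both $p$ and each prime of $\pi(\fitt{\overline G}/\fit{\overline G})$, one concludes that $p$ is adjacent to every vertex of both components, i.e. $p$ is complete, again contradicting $\diam{\Delta(G)}=3$.

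The main obstacle is this last step, the Sylow determination. Both halves ultimately reduce to forcing $p$ to be a complete vertex, and the delicate point is the character bookkeeping linking the two chief factors $P/P'$ and $P'$: producing characters non-trivial on $P'$ whose degrees simultaneously carry $p$ and every prime of the component not containing $p$. Controlling these mixed degrees — equivalently, understanding the joint action of $G$ on $\widehat N$ and on $\widehat{\overline M}$ — is where the real work lies; the class-$2$ structure of $P$ and the semilinear description of $\overline G/\fit{\overline G}$ from case (b) of Theorem~\ref{class} are the tools I would use to carry it through.
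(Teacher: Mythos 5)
Your proof of the non-centrality assertion breaks down at the point where you invoke Lemma~\ref{hyper2}. That lemma requires, as a genuine hypothesis, that every $\theta\in\irr{P}$ extends to $I_G(\theta)$, and your justification --- ``$\cent GP\le I_G(\theta)$ with $|G:I_G(\theta)|$ a $p$-power'' --- does not deliver it: the obstruction to extending sits exactly at the prime $p$. In the case at issue $P$ is a proper subgroup of a Sylow $p$-subgroup of $G$, so one must extend $\theta$ to the Sylow $p$-subgroup of $I_G(\theta)$, which properly contains $P$; an invariant irreducible character of a normal subgroup of a $p$-group need not extend (it can be fully ramified, as the central character of an extraspecial group already shows). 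This is precisely why the paper does not get Lemma~\ref{hyper2} for free: it first uses the hypotheses that $\Delta(G/P')$ is disconnected and $\diam{\Delta(G)}=3$ to show $\V{G/P'}=\V G$ with no isolated vertices, places $G/P'$ in case (b) or (c) of Theorem~\ref{class}, deduces that \emph{all Sylow subgroups of $G/\fit G$ are cyclic}, and only then obtains the extension property (extending $\theta\times 1_R$ through cyclic Sylow quotients). Your fallback ``concrete'' construction does not repair the gap: $P\times H$ need not be normal in $G$ (only $\cent GP$ is), so its characters do not control $\irr{G}$ via Clifford theory; and the existence of $\eta\in\irr{H}$ with $s\mid\eta(1)$ is itself unjustified --- from $s\in\V G$ one only gets $s$ dividing a degree of the normal subgroup $\cent GP$, whose Sylow $p$-subgroup may be strictly larger than $\zent P$, not a degree of $H$. (Your three-subgroups/coprimality computation showing that $H$ centralizes $P$ is correct, but it reproduces the first lines of the proof of Lemma~\ref{hyper2} rather than supplying its missing hypothesis.)

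The second and larger gap is that the Sylow assertion --- the actual core of the proposition, occupying nearly all of the paper's proof --- is left as a plan, as you acknowledge. Your setup for it also contains an error: in case (b) applied to $\overline G=G/P'$ you identify $\overline M$ with $\overline P$ and claim $\zent{\overline G}$ is a $p'$-group; neither is justified, and the paper explicitly allows a nontrivial Sylow $p$-subgroup $Z_p/N$ of $\zent{G/N}$, working with the decomposition $P/N=M/N\times Z_p/N$ throughout. Moreover your intended contradiction (``$p$ is a complete vertex'') is not the one that can be reached by the sketched transport argument, because the degrees of characters of $G$ lying over nontrivial $\lambda\in\widehat N$ are governed by the inertia subgroups $\cent H{\lambda}$, about which you have no information at that stage. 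The paper's route is quite different: it introduces the Hall subgroup $Y$ of the $p$-complement $X$ of $\fitt G$ corresponding to the vertices non-adjacent to $p$, observes that $PY\nor G$ and $MY\nor G$ have disconnected graphs of type (a) of Theorem~\ref{class}, extracts from Lemma~\ref{f.r.} the bound $|M/N|\ge|N|^2$, and then splits on $U=\cent XN$: if $U=X$ it shows $\Delta(G)=\Delta(G/N)$ is disconnected (contradiction), while if $U<X$ it uses Lemma~\ref{semilinear2} and Lemma~\ref{lemmaB} to force $|M/N|=|N|$, clashing with the earlier bound. None of this machinery --- which is what actually links the two chief factors $P/P'$ and $P'$ --- appears in your proposal, so the ``bookkeeping'' you defer is not a routine verification; it is the theorem.
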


\begin{proof}
Write $N=P'$ and $F = \fit G$. Observe first that $N$ lies in $\frat P$, thus $N\leq\frat
G$ and the ascending Fitting series of $G/N$ is just the image of
the ascending Fitting series of $G$ under the natural homomorphism
onto $G/N$. 

First, we will  show that $P$ is a Sylow $p$-subgroup of $G$.   
Assume, working by contradiction, that  this is not the case. 

Note that, since $\Delta(G)$ has diameter three, the graph $\Delta(G/N)$ (whose vertex set is  \(\V G\) in this situation)  has no isolated vertices, and therefore  $G/N$ is of type (b) or (c) of Theorem \ref{class}.
Hence, the Sylow subgroups of $G/F$ are all cyclic.

Let \(\theta\) be any character in \(\irr P\). Setting
\(R=\oh{p'}{F}\), clearly we have that \(\theta\times 1_R\) is an
extension of \(\theta\) to \(F\), such that \(I_G(\theta\times
1_R)=I_G(\theta)\). Moreover, since all Sylow subgroups of $G/F$ are
cyclic, \(\theta\times 1_R\) (and therefore \(\theta\)) extends to
\(I=I_G(\theta)\). 
If \(P/N\leq\zent{G/N}\) then, by 
Lemma~\ref{hyper2}, \(p\) would be a complete vertex of \(\Delta(G)\), against the assumption $\diam{\Delta(G)} = 3$.
Hence, \(P/N \not\leq\zent{G/N}\). By Lemma~\ref{lewis}, this yields that \(F/N\) is abelian and  hence that \(G/N\) is
necessarily of  type  (b) of Theorem~\ref{class}.

So
$F/N = M/N \times Z/N$,  where $M/N$ is an elementary abelian $p$-group having a complement $H/N$ in $G/N$  and 
$Z/N = \zent{G/N} = \cent{H/N}{M/N}$.
Set $K = \fitt G$ (so $K/N = \fitt{G/N}$).
Also, $M/N$ is an irreducible $K/N$-module, and 
$K/F$ and $G/K$ are cyclic groups of coprime order (in fact,  $\pi(K/F)$ and $\pi(G/K)$
are the connected components of $\Delta(G/N)$).  Note that $p\not\in \pi(K/P)$ (as $P = \oh pG$).

Let  $\pi_0 $ be the set of vertices not adjacent to $p$ in $\Delta(G)$; so, $\pi_0 \neq \emptyset$.
 We remark that $\pi_0 \sbs \pi(K/P)$, as $p \in
\pi(G/K)$ and $\pi(G/K)$ induces a complete subgraph of $\Delta(G)$.

Write $K = PX$, where $X \leq H$ is a  $p$-complement of $K$.
Note  that $X$ is abelian, because $X/(Z \cap X) \cong K/F$ is cyclic and $Z\cap X$ is central in $X$. 
Let $Y$ be the Hall $\pi_0$-subgroup of $X$.  Then $YN/N \nor H/N$ (as $XN/N = \fit{H/N}$). So  $PY \nor  G$ and hence
$\Delta(PY)$ is a subgraph of $\Delta(G)$. We deduce that
$\Delta(PY)$ is disconnected with components $\{p\}$ and $\pi_0$ (observe that every \(r\in\pi_0\) is a vertex of \(\Delta(PY)\); in fact, a Sylow \(r\)-subgroup \(R\) of \(PY\) is also a Sylow $r$-subgroup of $G$ and, if $R$ is abelian and normal in $PY$, then
the same is true in $G$ (as $PY \nor G$), against the fact that \(r\) is in \(\V G\)), hence $PY$ is of type (a)  in~ Theorem \ref{class}.
So, setting $C = \cent PY$, we have that $N \leq C$ and that every non-linear irreducible 
character of $P$ is fully ramified in $P/C$. Note also that $M\not\leq C$, since otherwise (as above) \(YN/N\) centralizes \(F/N\), so \(Y\leq F\) and no prime divisor of $|Y|$ would be a vertex of $\Delta(G)$.
Let $Z_p/N$  be  the  Sylow $p$-subgroup  of $Z/N$. 
Since   $Y$ acts trivially on both $N$ and $Z_p/N$, then  $Z_p\leq C $.  
Moreover, $C \cap M= N$, as $(C \cap M)/N$ is a proper  submodule of the 
$K/N$-irreducible module $M/N$. 
 Now,  since $P/N = M/N \times Z_p/N$, it follows that $C = Z_p$. 

We next observe that \(M\) is non-abelian. In fact, as \(p\) is an isolated vertex of \(\Delta(PY)\), every non-linear irreducible character of \(P\) is centralized by \(Y\); therefore, an application of \cite[Theorem 19.3]{MW} yields \(N=[P,Y]'\), but \([P,Y]=[MC,Y]=[M,Y]\leq M\), whence \(M'=N\). In particular, \(\Delta(MY)\) is also disconnected with components \(\{p\}\) and \(\pi_0\). Now, working with $MY$ instead of $PY$ (as in the previous paragraph), we get that every non-linear irreducible character of $M$  is fully ramified  with respect  to $M/\cent M Y=M/N$.  By Lemma~\ref{f.r.} this implies $|M/N| \geq |N|^2$.

Consider now $U = \cent XN$.
Assume first $U = X$.
Let $\chi \in \irr G$ such that $N \not\leq \ker(\chi)$ and let $\psi$ be an irreducible constituent of $\chi_P$. 
Then $\psi$ is fully ramified in $P/C$ and $I_G(\psi) = I_G(\theta)$, where $\theta$ is the irreducible constituent of $\psi_C$.
As $X$ acts trivially on both $C/N=Z_p/N$ and $N$, then $X$ centralizes $C$ and hence $\psi$ is $K$-invariant. 
So, recalling that $(|P|, |K/P|) = 1$, $\psi$ extends to $K$. Since $K/P \cong X$ is abelian, Gallagher's Theorem implies that
every irreducible character of $K$ lying over $\psi$ has degree coprime to $|K/P|$. 
It follows that $\pi(\chi(1)) \subseteq \pi(G/K)$ (recall that $p \in \pi(G/K)$). We conclude that
$\Delta(G) = \Delta(G/N)$ is disconnected, a contradiction.

 Hence, $U < X$. Choose $q \in \pi(X/U)$ and let $Q\in \syl q X$. 
So, $QN/N \nor H/N$ and then, in particular, $PQ \nor G$.  

We  remark that $p$  and $q$ are adjacent in $\Delta(G)$. If not, they are not adjacent in the subgraph $\Delta (PQ)$  as well, and
hence $\Delta(PQ)$ is of type (a) of Theorem~\ref{class}, giving  $N \leq \cent PQ$, so $Q \leq U$, against the choice of $q$. 
So, since $(p,q) \in \E G$ and  $q$ is not a complete vertex of $\Delta(G)$, there exists a
vertex $s \neq p$ of $\Delta(G)$ such that $(q,s) \not\in \E G$. Note also that  $s \in \pi(H/Z)$: otherwise, as the $p$-complement of $Z$ is abelian, \(G\) would have an abelian normal Sylow \(s\)-subgroup and \(s\) would not be a vertex of \(\Delta(G)\).

Let $S$ be a Sylow $s$-subgroup of $H$ and let $L = (QS)^H$. 
By applying Lemma~\ref{semilinear2} to $G/N$ with \(M/N\) playing the role of \(E\),  we get that $LZ/Z$ acts as a faithful, irreducible semi-linear group on $M/N = [M/N, Q]=[M/N,L]$ (here we are taking into account that \(\cent{M/N}Q\) is normal in \(G/N\), hence it is trivial because \(M/N\) is an irreducible \(K/N\)-module).
So, recalling that $C/N$ is central in $G/N$ (thus it lies in \(\cent{P/N}L\)), we see that  $\cent{P/N}L=C/N\times\cent{M/N}L = C/N$.
Also, for  every  non-trivial element $a \in M/N$, $\cent{LZ/Z}a$ contains a Sylow $s$-subgroup of $LZ/Z$ as a normal subgroup.
Finally, note that by part (d) of Lemma~\ref{semilinear2}, $\fit{LZ/Z}$ acts fixed-point freely on $M/N$ and hence,
in particular, $s$ does not divide $|\fit{LZ/Z}|$.

Next, observe that \([N,Q]=N\),  as $N$ is minimal normal in $G$ and  \(1 < [N,Q]=[N,PQ] \nor G\).
Hence, setting $B = \cent PL$, we have that $B \cap N =\cent N L\leq\cent N Q=1$. Since $C/N = \cent {P/N}L=\cent{P/N}{LZ/Z}$ and, recalling \ref{prop9'}(d), the action of \(LZ/Z\) on \(P/N\) is a coprime action, we get  $C = NB$ and hence 
$C = N \times B$, because $N$ is central in $P$. 
 
Let now $\gamma \in \widehat{C}$ such that $N \not\leq \ker (\gamma)$. So $\gamma$ is 
fully ramified in $P/C$; let $\psi \in \irr P$ the unique constituent of $\gamma^P$. 
Then $I_G(\psi) = \cent G{\gamma}$.
As $\gamma = \alpha \times \beta$ with $\alpha \in \widehat{N}\setminus\{1\}$ and 
$\beta \in \widehat{B}$, then $q$ divides $|G:\cent G{\gamma}|$. Since $\psi$ extends to 
$\cent G{\gamma}$ (in fact, the 
Sylow $p$-subgroups of $G/P$ are cyclic because $p \not\in \pi(K/P)$) and 
$(q,s) \not\in \E G$, as usual we get that $\cent G{\gamma}/P$ contains a Sylow 
$s$-subgroup of $G/P$ as a normal subgroup; in particular, the same is true also 
for $\cent{LZ/Z}{\gamma}$. 
But $\cent {LZ/Z}{\gamma} = \cent{LZ/Z}{\alpha}$, as $LZ$ acts trivially on $B$. 
So, for every $\alpha \in \widehat{N}\setminus\{1\}$, $\cent {LZ/Z}{\alpha}$ contains a Sylow $s$-subgroup of $LZ/Z$ 
as a normal subgroup. 
Hence an application of Lemma~\ref{lemmaB} yields that $|M/N| = |\widehat{N}|  = |N|$, a contradiction.

So far, we have shown that $P$ is a Sylow $p$-subgroup of $G$. Thus, every irreducible character of $P$ has an
extension to its inertia subgroup in $G$ and hence Lemma~\ref{hyper2} yields that $P/N$ is not central in $G/N$.
This finishes the proof.
\end{proof}

\begin{lemma}
  \label{C}
Let $P$ be a non-abelian normal Sylow $p$-subgroup of a solvable group $G$ and let
$H$ be a $p$-complement of $G$. Assume that there is a prime divisor $s$ of $|H/\fit H|$ such that $s$ is not adjacent to $p$ in $\Delta(G)$. Then for all $2 \leq i  \leq c$,
where $c$ is the nilpotency class of $P$, the factor groups
$M_i=\gamma_i(P)/\gamma_{i+1}(P)$ are chief factors of $G$ of the same order \(p^n\), with $n \geq 3$, and \(G/\cent G{M_i}\) embeds in \(\Gamma(p^n)\).
\end{lemma}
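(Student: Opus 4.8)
The plan is to exploit the hypothesis that $s\in\pi(H/\fit H)$ is non-adjacent to $p$ by running the machinery of Lemma~\ref{semilinear2} and Lemma~\ref{lemmaB} on the successive central factors $M_i=\gamma_i(P)/\gamma_{i+1}(P)$ of $P$, viewed as $H$-modules over $\GF p$. Since $P$ is a normal Sylow $p$-subgroup and $H$ a $p$-complement, each $M_i$ is acted on by $H$ with $P$ in the kernel (because $\gamma_{i+1}(P)\geq\gamma_{i+1}(P)$ and $[\gamma_i(P),P]\leq\gamma_{i+1}(P)$), so these are genuine $\GF p[H/\cent H{M_i}]$-modules. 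First I would fix $q\in\pi(\fit H/Z)\setminus\{p\}$ witnessing an edge-structure as in the hypotheses of Lemma~\ref{semilinear2} (using that $s$ is non-adjacent to $p$, so a fortiori the relevant non-adjacency with some $q$ in the Fitting subgroup of $H$ holds), and apply that lemma to identify, for each $i$, that $\cent{H}{a}$ contains a Sylow $s$-subgroup of $H$ as a normal subgroup for every nonzero $a\in M_i$.

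The heart of the argument is then to feed the pairs $(M_i,M_{i+1})$ (or $(M_1,M_i)$) into Lemma~\ref{lemmaB}: that lemma takes two $\GF p[H]$-modules on each of which $\cent Hv$ contains a normal Sylow $s$-subgroup for all nonzero $v$, and concludes that both modules are irreducible, that they have the \emph{same order}, and that $H/\cent H{M_i}\leq\Gamma(M_i)$. Applying this repeatedly across $2\le i\le c$ gives simultaneously the three conclusions: each $M_i$ is a chief factor of $G$ (irreducibility as an $H$-module, together with $P$ acting trivially, forces $G$-chief), all the $M_i$ share a common order $p^n$, and the semilinear embedding $G/\cent G{M_i}\hookrightarrow\Gamma(p^n)$. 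The verification that $\cent H a$ contains a normal Sylow $s$-subgroup on \emph{each} $M_i$ (not just on $M_1$ or $M_2$) is where I expect to lean on the fully-ramified/commutator structure of $P$: the non-adjacency of $p$ and $s$ must be propagated up the lower central series, presumably by an inductive Clifford-theoretic argument relating characters of $P$ lying over a given $\lambda\in\widehat{\gamma_{i+1}(P)}$ to the action of $S$ on $M_i$, much as in Lemma~\ref{semilinear2}(b).

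Finally, to get the bound $n\geq 3$ I would argue by contradiction. Once all $M_i$ are irreducible $\GF p[H]$-modules of common order $p^n$ with $H/\cent H{M_i}\leq\Gamma(p^n)$ and with a normal Sylow $s$-subgroup of $H$ centralizing vectors, Lemma~\ref{semilinear0} (via Lemma~\ref{semilinear2}(b)) forces $s\mid n$ and a primitive-prime-divisor $t$ of $p^n-1$ to exist; but the Zsigmondy exceptions and the coprimality constraints rule out $n=1$ and $n=2$, since $n=1$ would make $\Gamma(p^n)$ have no room for the prime $s$, and $n=2$ would (as in the parenthetical arguments in Lemmas~\ref{lemmaB} and~\ref{semilinear2}) force $s\mid(p^n-1)/(p^{n/s}-1)$, contradicting $s\nmid|X_i/C_i|$. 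Hence $n\geq 3$.

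The step I expect to be the main obstacle is the propagation of the $\cent Ha$-condition from the bottom factor $M_1$ (where it comes directly from Lemma~\ref{semilinear2}) to every higher factor $M_i$: this requires a careful inductive character-theoretic analysis along the lower central series of $P$, controlling how inertia subgroups and fully-ramified constituents behave when one passes from $\gamma_i(P)/\gamma_{i+1}(P)$ to the next layer, and it is here that the bulk of the genuine work lies rather than in the formal invocations of the semilinear lemmas.
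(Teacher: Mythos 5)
Your proposal has the right skeleton: view the \(M_i\) (equivalently their duals) as \(\GF p[H]\)-modules, verify for each of them that the centralizer of every non-trivial element contains a normal Sylow \(s\)-subgroup of \(H\), feed pairs of them into Lemma~\ref{lemmaB} to get irreducibility, equal orders, and the embedding into \(\Gamma(p^n)\), and then extract \(s\mid n\) and \(\gcd(s,p^n-1)=1\) from Lemma~\ref{semilinear0} to force \(n\geq 3\). This is exactly the architecture of the paper's proof. However, the step you yourself flag as ``the main obstacle'' --- establishing the centralizer condition on \emph{every} factor \(M_i\), \(2\le i\le c\) --- is precisely the content of the lemma, and you have not supplied it; moreover, the route you sketch toward it is misdirected. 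First, your claim that non-adjacency of \(s\) and \(p\) gives ``a fortiori'' some \(q\in\pi(\fit H)\) non-adjacent to \(s\), so that Lemma~\ref{semilinear2} applies, is unjustified: no such \(q\) need exist, and in any case the hypotheses of Lemma~\ref{semilinear2} (an abelian normal \(E\) with \(\fit G=E\times Z\), \(Z\le\zent G\)) fail here, since \(\fit G\) contains the non-abelian group \(P\). Second, no inductive ``propagation'' up the lower central series is needed at all.

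The paper's argument for the key step is direct and uniform in \(i\): fix \(2\le i\le c\) and a non-trivial \(\mu\in\widehat{M_i}\), and pass to \(\overline P=P/\gamma_{i+1}(P)\), in which \(M_i\) is \emph{central}. Since \(\cent H\mu\) acts coprimely on \(\overline P\) and fixes \(\mu\), \cite[Theorem 13.28]{I} produces \(\phi\in\irr{\overline P\mid\mu}\) that is \(\cent H\mu\)-invariant; centrality of \(M_i\) in \(\overline P\) gives \(\phi_{M_i}\in\mu\cdot\N\), hence \(I_H(\phi)=\cent H\mu\) exactly. The role of \(i\ge 2\) is that \(M_i\le \overline P\,'\), so \(\phi\) cannot be linear (a linear \(\phi\) would kill \(\overline P\,'\supseteq M_i\), contradicting \(\mu\neq 1\)), whence \(p\mid\phi(1)\). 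Now inflate \(\phi\) to \(P\); it extends to \(I_G(\phi)\) by coprimality, so by Clifford correspondence and Gallagher every \(\chi\in\irr{G\mid\phi}\) has degree \(|H:I_H(\phi)|\,\phi(1)\,\beta(1)\) with \(\beta\in\irr{I_H(\phi)}\). Since \(p\mid\chi(1)\) and \(s\) is not adjacent to \(p\), one gets \(s\nmid|H:I_H(\phi)|\) and \(s\nmid\beta(1)\) for all \(\beta\), which by Ito--Michler forces \(I_H(\phi)=\cent H\mu\) to contain a Sylow \(s\)-subgroup of \(H\) as a normal (abelian) subgroup. Together with the observation that \(\cent H{M_i}\) contains no Sylow \(s\)-subgroup (else it would be normal in \(H\), against \(s\mid|H/\fit H|\)), this is exactly the hypothesis of Lemma~\ref{lemmaB}, and the rest of your plan goes through. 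Without this argument (or a substitute), your proposal is a plan, not a proof.
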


\begin{proof}
For \(2 \leq i \leq c\), 
take any non-trivial \(\mu\) in \(\widehat{M_i}\): by
\cite[Theorem 13.28]{I}, and by the fact that \(M_i\) is central in
\(P/\gamma_{i+1}(P)\), there exists \(\phi\) in
\(\irr{P/\gamma_{i+1}(P)\mid\mu}\) such that \(I_H(\phi)=\cent
H{\mu}\). As \(i\geq 2\) and \(\mu\) is non-trivial, clearly
\(\phi(1)\) is a multiple of \(p\). Now, viewing \(\phi\) as a
character of \(P\) by inflation, we have that \(\phi\) extends to
its inertia subgroup in \(G\) (by coprimality); as a consequence of our non-adjacency
assumption, Clifford Correpondence together with Gallagher's Theorem yield
that \(I_H(\phi)\) (thus \(\cent H{\mu}\)) contains a Sylow
\(s\)-subgroup of \(H\) as a normal subgroup. Observe also that
\(\cent H{M_i}\) does not contain any Sylow \(s\)-subgroup of \(H\):
in fact, if \(S\in\syl s H\) lies in \(\cent H{M_i}\) (which in turn
lies in \(\cent H{\mu}\)), then \(S\) would be a characteristic
subgroup of \(\cent H{M_i}\), and therefore a normal subgroup of
\(H\), against the fact that \(s\) is a divisor of \(|H/\fit H|\).
We are then in a position to apply Lemma~\ref{lemmaB}, which
yields that all groups \(\widehat{M_i}\) are in fact  irreducible \(H\)-modules
of the same order;
thus  all  \(M_i\) are irreducible \(H\)-modules as well (i.e., \(M_i\)
is a chief factor of \(G\)), all of them have the same order \(p^n\), and all the groups \(G/\cent G {M_i}\) embed in \(\Gamma(p^n)\).
Moreover, Lemma~\ref{semilinear0} yields that $s$ divides $n$ and that $s$ is coprime to $p^n -1$. 
As $s \neq p$, this implies that $n \geq 3$. 
\end{proof}

\section{The main results}

In what follows, we write $\diam{\Delta(G)} \geq 3$ to indicate that
either $\Delta(G)$ is connected and ${\rm diam}(\Delta(G)) = 3$, or
that $\Delta(G)$ is not connected (i.e., ${\rm diam}(\Delta(G)) =~
\infty$).

\begin{proposition}
\label{maxiemendamento?} Let $G$ be  solvable group  such that   \(\fit G=P\) is   a non-abelian Sylow
\(p\)-subgroup of \(G\), and assume that  \(P'\) is a minimal normal subgroup of
\(G\).
If $\diam {\Delta(G)} \geq 3$ and  \(p\) is not an isolated vertex of $\Delta(G)$,  then $\fitt G$ does not centralize $P'$.
\end{proposition}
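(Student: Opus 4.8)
The plan is to argue by contradiction: I assume that $K:=\fitt G$ centralizes $N:=P'$ and derive a configuration incompatible with $\diam{\Delta(G)}\ge 3$, the engine being a module computation on the chief factors of $P$. First I record the reductions. Since $N=P'$ is minimal normal in the solvable group $G$ it is elementary abelian, and $[N,P]=\gamma_3(P)$ is normal in $G$ and contained in $N$; minimality together with the nilpotence of $P$ forces $[N,P]=1$, so $N\le\zent P$ and $P$ has class $2$. As $\fit G=P$ is a $p$-group, $\oh tG=1$ for every prime $t\ne p$, whence (by Ito--Michler) every prime dividing $|H|$ is a vertex of $\Delta(G)$, and $\pi(\fit H)=\pi(K/P)$, $\pi(H/\fit H)=\pi(G/K)$.

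Next I would produce a prime $s\in\pi(H/\fit H)$, $s\ne p$, not adjacent to $p$. Since $\diam{\Delta(G)}\ge3$, $p$ is not a complete vertex (a complete vertex yields a connected graph of diameter at most $2$), so $p$ has a non-neighbour. In the disconnected case $G$ is of type (c) of Theorem~\ref{class} (as $P=\oh pG$ is the unique non-abelian $\oh rG$ and $p$ is not isolated), and the non-neighbours of $p$ are precisely the other component $\pi(G/K)=\pi(H/\fit H)$, so any $s$ there works. In the connected diameter-three case one argues along a geodesic of length three, using that $\pi(K/P)$ induces a complete subgraph (Lemma~\ref{brodkey}) to locate a non-neighbour of $p$ inside $\pi(G/K)$; this is one of the delicate points.

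With such an $s$ fixed, I would extract the semilinear structure of the first chief factor exactly as in the proofs of Lemma~\ref{C} and Proposition~\ref{prop9'}: applying Lemma~\ref{semilinear2} to $G/N$ (together with Lemmas~\ref{semilinear0} and \ref{modules}) shows that $M_1:=[P,G]/N=[P/N,G]$ is an irreducible chief factor of order $p^{n}$ on which $K$ acts, through a cyclic group $\overline K\le\Gamma_0(M_1)$ of field multiplications, irreducibly; moreover $s\mid n$, $s\nmid p^{n}-1$, and $\tfrac{p^{n}-1}{p^{\,n/d}-1}$ divides $|\overline K|$, where $d$ is the order of the image of a Sylow $s$-subgroup, a power of $s$ dividing $n$. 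The key link is then the commutator map: since $P$ has class $2$ it induces a $G$-equivariant map $c\colon M_1\wedge M_1\to N$, and the main technical step is to show $c$ is surjective, i.e. that $M=[P,G]$ is non-abelian with $M'=N$ — the alternative, in which $N$ is generated by commutators from the $G$-central part of $P/N$, can be shown to force $p$ to be isolated, contrary to hypothesis. Granting surjectivity, $N$ is a nonzero $G$-quotient of $M_1\wedge M_1$; but $K$ centralizes $N$, so $N$ is a trivial $\overline K$-constituent, and writing the action of $\overline K$ through a generator $\epsilon$ of $\GF{p^n}^{\times}$ this means $\epsilon^{\,p^{i}+p^{j}}=1$ for some $i<j$, i.e. $\epsilon^{\,p^{k}}=\epsilon^{-1}$, so $M_1$ is self-contragredient as a $\overline K$-module.

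I would finish by invoking Lemma~\ref{modules}(b) with the cyclic group $\overline K$, the faithful irreducible module $M_1$ of order $p^{n}$, and $r=n/d$ (so $1\le r<n$ and $\tfrac{p^n-1}{p^r-1}\mid|\overline K|$): it gives $(|\overline K|,n/r)=(p^r+1,2)$, whence $d=n/r=2$. As $d$ is a power of $s$ this forces $s=2$, and then $s\nmid p^{n}-1$ forces $p^{n}$ to be even, i.e. $p=2=s$, contradicting $s\ne p$. This contradiction establishes that $\fitt G$ does not centralize $P'$. I expect the surjectivity of $c$ (equivalently, $[P,G]$ non-abelian) to be the principal obstacle, since everything afterwards is the standard semilinear/Zsigmondy machinery; a secondary difficulty is pinning down the non-adjacent prime $s$ within $\pi(H/\fit H)$ in the connected diameter-three case.
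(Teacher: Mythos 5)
Your end-game (steps 4--6) is close in spirit to the paper's own conclusion: the paper also finishes by producing a self-contragredient module for a cyclic $p'$-group whose order is divisible by $(p^n-1)/(p^{n/d}-1)$ and then invoking Lemma~\ref{modules}(b) to force $p=2=s$; your wedge-product route to self-contragredience (the trivial module occurs in $M_1\wedge M_1$ because $\fitt G$ centralizes $N$) is a legitimate variant of the paper's argument via fully ramified characters and the commutator form. The fatal problem is upstream, in your step 2, and it propagates. In the connected case you propose to locate a non-neighbour $s$ of $p$ inside $\pi(H/\fit H)=\pi(G/\fitt G)$. Under the standing contradiction hypothesis this is exactly the wrong place: the first step of the paper's proof shows that \emph{every} prime of $\pi(H/\fit H)$ is adjacent to $p$. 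Indeed, if $q\not\sim p$, take $1\neq\phi\in\widehat{N}$; since $\fit H$ centralizes $N$, \cite[Theorem 13.28]{I} gives $\psi\in\irr{P|\phi}$ with $\fit H\leq I_G(\psi)$; $\psi$ is non-linear and extends to $I_G(\psi)$, so Clifford correspondence and Gallagher's theorem force a full Sylow $q$-subgroup $Q$ of $G$ to be abelian and normal in $I_G(\psi)\cap H$; as $\fit H\nor I_G(\psi)\cap H$, it follows that $Q$ centralizes $\fit H$, whence $Q\leq\fit H$. So \emph{all} non-neighbours of $p$ lie in $\pi(\fit H)$, and your ``delicate point'' cannot be filled; the geodesic-plus-completeness argument you sketch does not do it either, since completeness of $\pi(\fitt G/P)$ (Lemma~\ref{brodkey}) is perfectly compatible with the non-neighbours of $p$ lying inside that very set --- which is what actually happens here. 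The paper's choice is the opposite of yours: a prime $q\in\pi(\fit H)$ with $q\not\sim p$, then a second prime $s\notin\pi(\fit H)$ with $s\not\sim q$ (found along the geodesic), and the whole analysis is run on $[P,Q]$ and $L=(QS)^H$, not on $[P,G]$ and $\fitt G$.

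Two further gaps are consequences of this wrong turn. First, even granting your $s$, Lemma~\ref{semilinear2} cannot be applied to $G/N$ with the pair $(p,s)$: it requires two non-adjacent primes $q\in\pi(\fit H)\setminus\{p\}$ and $s\in\pi(H)\setminus\{p\}$. More substantively, non-adjacency of $s$ to $p$ only controls $\cent H{\mu}$ for $\mu\in\widehat{N}\setminus\{1\}$, because every character of $G$ lying over such a $\mu$ has degree divisible by $p$; it gives no information on $\cent H{\mu}$ for $\mu\in\widehat{M_1}$, since characters of $P$ lying over those may be linear. This is precisely why Lemma~\ref{C} covers only the factors $\gamma_i(P)/\gamma_{i+1}(P)$ with $i\geq 2$, and why the structure you claim in step 3 (irreducibility of $M_1$ with $\overline K$ cyclic inside $\Gamma_0(M_1)$ and $(p^n-1)/(p^{n/d}-1)$ dividing $|\overline K|$) does not follow from what you have. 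Second, the surjectivity of the commutator map --- i.e.\ that $[P,G]$ is non-abelian --- is asserted rather than proved: the paper derives the analogous fact $[P,Q]'=P'$ from \cite[Theorem 19.3]{MW}, which needs a prime $q\not\sim p$ whose Sylow subgroup is \emph{normal} in $H$, again pointing to $q\in\pi(\fit H)$; nothing similar is available for your non-normal Sylow $s$-subgroup. (Your disconnected case is fine as far as locating $s$ goes, by Theorem~\ref{class}(c); but note that there the hypothesis is already refuted in one stroke by the observation above, since $\pi(G/\fitt G)$ is a non-empty component away from $p$.)
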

\begin{proof}
We denote by $H$ a $p$-complement of  $G$, and we set $X = \fit H$ and $N = P'$. Since  $\fitt G=PX$, we have to prove that  $X$ does not centralize $N$.
 Working by contradiction  we assume  that $X$ centralizes $N$, and go through a series of steps.

\begin{enumeratei}
\item \emph{Every prime in $\pi(H/X)$ is adjacent to $p$ in $\Delta(G)$.}

\begin{proof} Let $q \in \V G$ be such that $(q,p) \not\in \E G$ and
consider a non-trivial character $\phi \in \widehat{N}$. As $\phi$ is
$X$-invariant and $p$ does not divide $|X|$, by  ~\cite[Theorem 13.28]{I}
there is a $\psi \in \irr{P|\phi}$ such that $X \leq I = I_G(\psi)$.
By coprimality, $\psi$ extends to $I$;  hence  Gallagher's  Theorem and Clifford
Correspondence imply that $I \cap H \simeq  I/P$ contains a Sylow
$q$-subgroup $Q$ of $G$,  and  that $Q$ is abelian and normal in $I
\cap H$. As $X \nor I \cap H$, then $Q$ centralizes the
$q$-complement of $X$. But $Q$ centralizes also the Sylow
$q$-subgroup $Q\cap X$ of $X$, as $Q$ is abelian. Thus, $Q$ centralizes
$X = \fit H$ and hence $Q \leq X$, so $q \not\in \pi(H/X)$.
\end{proof}

Note that a group \(G\) as in our hypotheses, in the disconnected case, is as in (c) of Theorem~\ref{class}; in particular, the connected components of
$\Delta(G)$ are $\{p \} \cup \pi(X)$ and $\pi(H/X)$, clearly against what obtained in the paragraph above. This contradiction settles the disconnected case, so we may henceforth assume that \(\Delta(G)\) is connected of diameter \(3\).

\smallskip
\item \emph{There exist $q \in \pi(X)$ and $s \in \pi(H) \setminus \pi(X)$ such that
$q$ is non-adjacent to both $p$ and $s$ in $\Delta(G)$.}

\begin{proof}
By Lemma~\ref{brodkey}  the subgraph of \(\Delta (G)\) induced
by \(\pi(X)\) is complete  and by (a) the set of vertices of $\Delta(G)$ that are not adjacent to $p$ is contained in  \(\pi(X)\).  
We may consider a  path \(q- r- s_1- s_2\) in $\Delta (G)$ connecting two vertices with distance $3$, where $q \neq p$, $q$ is not adjacent to $p$, and  one among
the \(s_i\) is not $p$: naming it $s$ we have the claim. 
\end{proof}

\smallskip
\item \emph{Let $Q \in \syl qX$, $S \in \syl sH$. Set $M = [P,Q]$, $L =
(QS)^H$ and $X_0=  \fit L$. Then $M' = N$ and  $M/N$ is an irreducible
$X_0$-module. Moreover,
setting $|M/N| = p^n$, we have that \(X_0\) is a cyclic group whose
order is divisible by $(p^n -1)/(p^{n/|S|} - 1)$.}

\begin{proof}
Note that $Q$ stabilizes every non-linear irreducible character of
$P$, as $Q \nor H$ and $q$ is not adjacent to $p$. So $M' = P' = N$
by~\cite[Theorem 19.3]{MW}, and it also follows that $M/N = [P/N,
Q]$. Now we are in a position to apply Lemma~\ref{semilinear2} to
the group \(G/N\), with respect to the non-adjacent vertices \(q,s\)
of \(\Delta(G/N)\), and all the desired conclusions follow.
\end{proof}

\smallskip
\item \emph{$V = M/N$ is a self-contragredient $X_0$-module.}

\begin{proof}
Let $\psi$ be a non-linear irreducible character of \(M\), and
$\lambda$ an irreducible constituent of \(\psi_N\). Then $\lambda$
is $X_0$-invariant (as \(X_0\leq X\)), and clearly it does not
extend to $M$. Since $M/N$ is irreducible as an $X_0$-module, then
$\lambda$ is fully ramified with respect to  $M/N$ (see \cite[Exercise~6.12]{I}).
So by Remark~\ref{fullyramified} the bilinear form  defined by
$\epsilon^{\langle aN,bN \rangle} = \lambda([a,b])$ on $M/N$ (where
$\epsilon$ is a given primitive $p$-th root of unity) is
non-degenerate, and it is also $X_0$-invariant, as $X_0$ acts
trivially on $N$. Hence, it induces an isomorphism of $X_0$-modules
between $M/N$ and its contragredient module.
\end{proof}
\end{enumeratei}

Note that $q \in \pi(X_0) \sbs \pi(X)$, so Lemma~\ref{brodkey}
implies that $s$ does not divide $|X_0|$. We conclude by applying
Lemma~\ref{modules}, which gives  $p = 2 = s$, a contradiction.
\end{proof}

The next result, which is the core of this work, shows that actually
no group \(G\) such that \(\Delta(G)\) is connected can satisfy the
assumptions of Proposition~\ref{maxiemendamento?}.

\begin{proposition}
\label{maxiemendamento}Let $G$ be a solvable group  such that   \(\fit G=P\) is   a non-abelian Sylow
\(p\)-subgroup of \(G\), and let \(N=P'\) be a minimal normal subgroup of
\(G\). Assume also that $\diam {\Delta(G)} \geq 3$ and \(p\) is not an isolated vertex of $\Delta(G)$.
Then $\Delta(G)$ is not connected, $[P,G]/N$ is a chief factor of $G$, and $|[P,G]/N| = |N|$; moreover, if $H$ is a $p$-complement of $G$, then 
$H \leq \Gamma([P,G]/N)$ and $H/\cent HN \leq \Gamma(N)$.
\end{proposition}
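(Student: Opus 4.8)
The plan relies on Proposition~\ref{maxiemendamento?}, which guarantees that $X=\fit H$ does not centralise $N$. I would first extract the structural features forced by the hypotheses: since $N=P'$ is a minimal normal subgroup of $G$ lying inside the nilpotent group $P$, the subgroup $[N,P]$ is a proper $G$-submodule of $N$ and hence trivial, so $N\le\zent P$, $P/N$ is abelian and $P$ has class exactly $2$ (thus $\gamma_3(P)=1$). Consequently $N$ is an irreducible $\GF p[H]$-module (with $P$ acting trivially) on which, by Proposition~\ref{maxiemendamento?}, $X$ acts non-trivially. Next I would locate the prime that drives the semilinear analysis: as $\diam{\Delta(G)}\ge 3$, the vertex $p$ is not complete, so there is a vertex $s\ne p$ with $(p,s)\notin\E G$. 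Using that $\pi(X)$ spans a complete subgraph (Lemma~\ref{brodkey}) and that, for $\phi\in\widehat N$ moved by a prime $q\in\pi(X)$, a non-linear $\theta\in\irr{P\mid\phi}$ yields a character of $G$ of degree divisible by $pq$, I would argue that such a non-neighbour $s$ of $p$ may be taken inside $\pi(H/\fit H)$.

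With $s\in\pi(H/\fit H)$ non-adjacent to $p$, Lemma~\ref{C} applies directly to $P$ and $H$. Because $P$ has class $2$, the only layer it sees is $M_2=\gamma_2(P)/\gamma_3(P)=N$, and it returns at once that $N$ is a chief factor of $G$ of order $p^n$ with $n\ge 3$ and that $H/\cent HN$ embeds in $\Gamma(N)$. This settles the two assertions concerning $N$; the remaining work is to analyse the top factor $M_1=[P,G]/N$ and to prove that $\Delta(G)$ is disconnected.

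For $M_1$ the key device is the commutator map. As $P$ has class $2$, $[\,\cdot\,,\cdot\,]$ induces a surjective, alternating, $H$-equivariant pairing $P/N\times P/N\to N$. Writing $P/N=M_1\oplus\cent{P/N}H$ (a coprime, hence $H$-invariant, decomposition), I would examine how this pairing restricts to the summands: the part on $\cent{P/N}H$ lands in $\cent NH=0$, so either $[M_1,\cent{P/N}H]=N$, in which case $N$ is an $H$-module quotient of $M_1\otimes\cent{P/N}H$ and therefore isomorphic to $M_1$, or the pairing is concentrated on $M_1$, giving a surjection $\wedge^2 M_1\twoheadrightarrow N$ of $H$-modules. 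Either way I would deduce that $M_1$ is an irreducible $H$-module with $|M_1|=|N|$ and $H/\cent H{M_1}\le\Gamma(M_1)$, appealing to Lemma~\ref{lemmaB} (with $V_1=M_1$, $V_2=N$) and to Lemma~\ref{modules}/Lemma~\ref{tensor} to control the constituent relation $N\mid M_1\wedge M_1$ and rule out the spurious possibilities. Finally the embedding improves to $H\le\Gamma(M_1)$: any $h\in\cent H{M_1}$ acts trivially on $\wedge^2 M_1$, hence on its quotient $N$, as well as on $\cent{P/N}H$, so $h$ centralises both factors of $N\le P$ and, by coprimality, centralises $P=\fit G$; thus $h=1$.

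The main obstacle, as I see it, is exactly the control of the $H$-action on $M_1$. In contrast to $N$, the dual $\widehat{M_1}$ consists of linear characters of $P$, whose degrees are coprime to $p$, so the non-adjacency of $s$ and $p$ gives no direct grip on the stabilisers; the information has to be transported from $N$ through the commutator pairing (relating $\cent H v$ to the stabilisers of the values $[v,\,\cdot\,]\in N$), and it is precisely here that the matching of field degrees forcing $|M_1|=|N|$ must be teased out, with Lemma~\ref{modules} used to discard the degenerate configurations. Once this rigid semilinear picture is in place — $M_1$ and $N$ irreducible and semilinear, with $s$ dividing their common $\GF p$-dimension — I would conclude that $\Delta(G)$ cannot be connected of diameter $3$: the degrees distribute so that $\{p\}\cup\pi(X)$ and $\pi(H/\fit H)$ carry no edge between them, whence $\Delta(G)$ is disconnected, which is the first assertion and completes the proof.
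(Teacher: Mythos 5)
Your plan has two genuine gaps, and they are located exactly at the points where the paper's proof does its real work.

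First, the pivot of your argument --- a prime $s\in\pi(H/\fit H)$ with $(p,s)\notin\E G$, so that Lemma~\ref{C} applies --- is not available at the outset, and your sketch does not produce it. Your observation (correct, and also the paper's opening move) shows that any $q\in\pi(\fit H)$ for which $\oh q{\fit H}$ moves $N$ is \emph{adjacent} to $p$; but it leaves open the possibility that every non-neighbour of $p$ lies in $\pi(\fit H)$ and acts trivially on $N$, and nothing you say excludes this. Indeed, proving that $\{p\}\cup\pi(\fit H)$ induces a complete subgraph is step (e) of the paper's proof, and it is obtained only inside a minimal counterexample \emph{after} the equality $|[P,G]/N|=|N|$ is known (it enters through Lemma~\ref{f.r.}) --- that is, after the very conclusions you want Lemma~\ref{C} to deliver. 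The paper escapes this circularity by basing the semilinear analysis on a different pair: $q\in\pi(\fit H)$ with $[N,\oh q{\fit H}]=N$ is adjacent to $p$, hence (no vertex of a diameter-three or disconnected graph being complete) there exists a vertex $s$ non-adjacent to $q$, and $s\notin\pi(\fit H)$ by Lemma~\ref{brodkey}; Lemma~\ref{semilinear2} and Lemma~\ref{lemmaB} are then run on $(q,s)$, never on $(p,s)$.

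Second, the obstacle you candidly flag is not a technical wrinkle but the crux, and it is unresolved in your outline. To apply Lemma~\ref{lemmaB} with $V_1=M_1$, $V_2=N$ you need, for every nontrivial $\mu$ in $\widehat{M_1}$ \emph{and} in $\widehat N$, that $\cent H\mu$ contain a Sylow $s$-subgroup of $H$ as a normal subgroup. With the pair $(q,s)$ this is automatic once one knows that $\cent P{\oh q{\fit H}}$ is a central direct factor of $G$ (the paper's steps (a)--(b), which use your commutator-pairing idea together with Lemma~\ref{modules}, plus a separate argument via Theorem~\ref{Zuccari}): then $Q$ acts fixed-point-freely on both $M_1$ and $N$, so $q$ divides $|H:\cent H\mu|$ for all relevant $\mu$, and Clifford theory plus the non-adjacency of $q$ and $s$ forces the normal Sylow $s$-subgroup. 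Your pair $(p,s)$ cannot achieve this for $\widehat{M_1}$, as you note yourself; ``transporting the information through the commutator pairing'' names the missing argument rather than supplying it. (Also, your first branch ``$[M_1,\cent{P/N}H]=N$, hence $N\cong M_1$'' presupposes irreducibility of $M_1$; the paper in fact proves this branch cannot occur.) Finally, the concluding claim that the rigid semilinear picture forces the degrees to ``distribute'' into $\{p\}\cup\pi(\fit H)$ and $\pi(H/\fit H)$ with no edge between them is an assertion of the statement, not a deduction: in the paper this is steps (d)--(i), comprising the analysis of $Y=\cent HN$, the counting of non-fully-ramified characters via the subgroups $Z_\lambda$ and Lemma~\ref{f.r.}, the proof that $Y\leq\zent H$, and the use of Lemma~\ref{semilinear0} to place a conjugate of the complement $D$ inside every $\cent H\lambda$. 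None of that work is present, or replaced, in your proposal.
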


\begin{proof}
Let $H$ be a $p$-complement of $G$, and set $X = \fit H$ (observe that $\cent H P = 1 = \cent H{P/N}$). By the previous proposition, there exists a prime divisor $q$ of $|X|$ such that $Q =
\oh qX$ does not centralize $N$. As \(N\) is minimal normal in
\(G\), we have \([N,Q]=N\) and \(\cent N Q=1\), whence \(Q\not\leq \cent H{\lambda}\) for every non-trivial
\(\lambda\in\widehat{N}\). So \(Q\) does
not lie in the inertia subgroup of any non-linear irreducible
character of \(P\) (as restrictions to $N \leq \zent P$ are homogeneous), and hence $q$ is adjacent to $p$ in $\Delta(G)$.

Since $\V{G/N} = \V{G} \setminus \{p\}$ and $q$ is adjacent to $p$ in $\Delta(G)$, there certainly
exists $s\in \V{G/N}$ that is not adjacent to $q$ in $\Delta(G/N)$.
Observe also that $s$ does not divide $|X|=|\fit {HN/N}|$, as otherwise,    by Lemma~\ref{brodkey}, $q$ would be adjacent to $s$ in $\Delta (G/N)$; therefore  a Sylow \(s\)-subgroup \(S\) of \(H\) is not normal in \(H\). Let $L =
(QS)^H$, \(V=[P/N,Q]\) and $C/N = \cent {P/N}Q$; then, as $N \le [P,Q] =R$, $V = R/N$, $P/N = R/N \times C/N$. Setting \(|V|=p^n\), by applying Lemma~\ref{semilinear2} we obtain that  $C/N = \cent {P/N}L$, \(R = [P,L]\),  $L \leq \Gamma(V)$ and  $V$ is an irreducible
$X_0$-module, where $X_0 = L \cap \Gamma_0(V) = \fit L$; moreover, \(|S|\) divides
\(n\), the order of \(X_0\) is divisible by \((p^n-1)/(p^{n/|S|}-1)\), and there exists a primitive prime divisor \(t_0\) of \(p^n-1\). As in the previous proposition, we shall proceed through a number of steps.

\smallskip
\begin{enumeratei}
\item\label{noiso} \emph{$R$ is not abelian and, as an $X_0$-module, $N$ has no irreducible constituent isomorphic to  $V= R/N$.}
\begin{proof} Suppose, by contradiction, that $R$ is
abelian, and consider the action of \(L\) on the dual group
\(\widehat{R}\): by coprimality, no non-trivial element of
\(\widehat{R}\) is centralized by \(Q\), and therefore every
irreducible character of \(RL\) whose kernel does not contain \(R\)
has a degree divisible by \(q\). Since \(RL\nor G\) (therefore
\(\Delta(RL)\) is a subgraph of \(\Delta(G)\)) and every irreducible
character of \(R\) extends to its inertia subgroup in \(RL\), we
have that \(\cent L{\lambda}\) contains a unique Sylow \(s\)-subgroup of
\(L\) for every \(\lambda\in\widehat{R}\setminus\{1_R\}\). An
application of Lemma~\ref{semilinear1} yields that $\widehat{R}$
(thus \(R\)) is an irreducible $L$-module, contradicting the fact
that $N$ is a proper non-trivial $L$-invariant subgroup of $R$. Therefore $R$ is not abelian.

Observe next that $\langle\; ,\; \rangle: R/N \times R/N \rightarrow N$,
defined by
 $\langle aN, bN \rangle = [a,b]$, for $a,b\in R$,   is a ${\rm GF}(p)$-bilinear map. This induces a homomorphism (of
${\rm GF}(p)$-spaces) $\delta: R/N \otimes_{\GF p} R/N
\rightarrow N$, which is  easily checked to be  an
$X_0$-homomorphism. 
Since $R$ is not abelian and \(N\) is minimal normal in \(G\), we have $R'= N$, whence 
 $\delta$ is surjective. Thus, $\delta$ induces a surjective
$X_0$-homomorphism from $R/N \wedge_{\GF p} R/N$ to $N$, because the symmetric tensors are
in $\ker{\delta}$. If $N$, as an $X_0$-module,  has
an irreducible constituent isomorphic to $R/N$ then, by Lemma~\ref{modules}, $p = 2= s$, which is not our case.
\end{proof}
\item \emph{$\cent PQ$ is an abelian direct factor of \(G\), and \(H\) acts faithfully on $V$.}

\begin{proof}
Note that \(\cent PL=\cent P Q\) and, by coprimality, \(C=\cent P L\times N\); moreover, we have
\(R\cap\cent P L=1\) because \(N=[N,Q]\). As
\(P=R\cent P L\), we get that \(\cent P L\simeq P/R\) is abelian.

If $C$ is not contained in $\zent P$ then, as \(N\leq\zent P\), we
can choose $y \in \cent PL\setminus\zent P$. The map $\varphi_y: R/N
\rightarrow N$, defined by $\varphi_y(aN) = [a,y]$, is then 
a homomorphism of $X_0$-modules, because $X_0$
centralizes $y$. Since \(R/N\) is an irreducible \(X_0\)-module,
\(\varphi_y\) is either injective or the trivial homomorphism. But
in the latter case \(y\) would centralize both \(R\) and $C$, whence 
\(y\in\zent P\). We conclude that $\varphi_y$ is injective, which means that 
the $X_0$-module $N$ has a constituent isomorphic to $R/N$, against step (a).

Hence $C \leq \zent P$. Let $M = \cent PL$; so $P = R \times M$ and
$HR$ is a complement for $M$ in $G$. We next show that $M \leq \zent G$.

Write $\o G = G/N$.
If $M$ is not central in $G$, then
 $\o K = \cent {\o{HR}}{\o M} < \o{HR}$ (note that $ \o L \leq \o K$) and so there exists a prime divisor $t$ of $|\fit{\o{HR}/\o K}|$. Then, clearly, $t$ divides
$|\o G:I_{\o G}(\lambda)|$ for some non-trivial
irreducible character $\lambda$ of $\o M$, whence $t$ divides $\chi(1)$ for all $\chi \in
\irr{\o G|\lambda}$. But $\o{MK} = \o M \times \o K \nor \o G$,
so $t$ is adjacent
in $\Delta(\o G)$ to every vertex of $\Delta(\o K)$.
As  $s$
is a  vertex of $\Delta(\o K)$ (note that $\o S$ is not normal in $\o H$, so
it is not normal in $\o L$, and the same holds in $\o K$), we conclude
that $s$ is adjacent to $t$.
This is true for every vertex $s$ that is not adjacent to $q$ in $\Delta(G/N
)$
and for $q$ as well; in fact,  $\o Q \leq \o K$ and $\o K$ does not have a normal Sylow $q$-subgroup, as otherwise  $\o Q$ would centralize $\o R=V$. It thus follows  that $\Delta(G/N)$ is connected; in particular, \(\Delta(G)\) is connected as well (and it has diameter \(3\), by our assumptions), otherwise \(p\) would be an isolated vertex of \(\Delta(G)\).

Now, by Theorem~~\ref{Zuccari}, $\diam{\Delta(G/N)} \leq 2$, because $\fit{G/N} = P/N$ is abelian. We then have that  there exists $s' \in \V G$
such that $d_G(s', p) = 3$.
Note that then $s'$ is not adjacent to $q$ in $\Delta(G)$ (as $p$ is adjacent
 to
$q$); therefore, $s'$ is not adjacent to $q$ in $\Delta(\o G)$  as well,
and so, as observed above,  it is adjacent to $t$ in $\Delta(\o G)$ and hence  in $\Delta(G)$.
But $M \times K \nor G$ implies  that $p$ is adjacent to $t$ in
$\Delta(G)$,  yielding  $d_G(s, p) \leq 2$. This contradiction shows that \(M\) lies in \(\zent G\).
Hence, $G = HR \times M$ and $M = \cent P Q$ is a central direct factor of $G$.

As a consequence, we get \(\cent H V=\cent H{P/N} = 1\), so $H$ acts faithfully on $V$ and the proof of (b) is complete.
\end{proof}
\noindent In the following, we set \(Y=\cent H N\), write $\o H = H/Y$, and adopt the bar
convention.
The next step settles, in particular, the first claim of the statement.

\medskip
 \item\label{N} \emph{ $[P,G]/N$ and $N$ are chief factors of $G$ of the same order $p^n$, $H \leq \Gamma([P,G]/N)$ and $\o H \leq \Gamma(N)$.}

\begin{proof} By the previous step:
 $$[P,G]=[[P,Q]\cent P Q,G]=[[P,Q],G]\leq[P,Q]=R.$$ 
Therefore, \([P,G] = [P,Q]= R\). It follows that if \(\mu\) lies either in \(\widehat{V}\setminus\{1\}\) or in \(\widehat{N}\setminus\{1\}\), then \(\cent H\mu\) does not contain \(Q\) and, by \cite[Theorem 13.28]{I}, there exists \(\theta\in\irr{P|\mu}\) such that \(\cent H\mu\leq I_H(\theta)\). But the restriction of \(\theta\) to \(R\) (respectively, to \(N\)) is a multiple of \(\mu\), so in fact \(\cent H\mu=I_H(\theta)\); now, since \(\theta\) extends to \(I_G(\theta)\) (and recalling that \(s\) is a vertex of \(\Delta(G)\) not adjacent to \(q\)), we get that \(\cent H\mu\) contains a Sylow \(s\)-subgroup of \(H\) as a normal subgroup. Hence, by Lemma~\ref{lemmaB} we conclude that $V= R/N$ is a chief factor of $G$ of order $|N|$, and that $H \leq \Gamma(V)$ and $H/\cent HN \leq \Gamma(N)$.
\end{proof}

\medskip
In view of the above paragraph, our aim for the rest of the proof will be to show that \(\Delta(G)\) cannot be connected under our hypotheses. To this end, we  assume that $G$ is a counterexample of minimal order; thus $\Delta(G)$ is connected, ${\rm diam}(\Delta(G)) = 3$ and, by minimality, \(G\) has no non-trivial abelian direct factors. In particular, step (b) yields $\cent PQ=1$, $R = P$ and  $V = P/N$.

Setting $m = |H/X|$, as $H \leq \Gamma(V)$ we  have that \(m\) divides
\(n\); in particular, a primitive prime divisor $t_0$ of $p^n -1$ (which exists, as observed before), being   larger than \(n\), does not divide \(m\). Therefore, denoting by $T_0$ a
Sylow $t_0$-subgroup of $H$, we have that $T_0$ lies in $X_0$ and it is in fact central in $X$.  Now, Lemma~  \ref{ppd} yields $\cent H{T_0}\leq \Gamma_0(V)$, whence $X\leq \Gamma_0(V)$ and $X$ acts fixed-point freely on $V$; also,  as observed in the paragraph preceding (a), $s$ does not divide  the order of $X=\cent H{T_0}$. Now (with the notation introcuced before point (c)),

\medskip
\item\emph{\(t_0\) does not divide \( |Y|\), \(Y\leq X\), and $\o X= \o H \cap \Gamma_0(N)$.} 

\begin{proof} As observed above,  \(|T_0|\mid |\o H\cap\Gamma_0(N)|\). This in turn implies \(t_0\nmid|Y|\); thus (as \(Y\) and \(T_0\) are both normal in \(H\)) we get \([Y,T_0]\leq Y\cap T_0=1\), whence \(Y\leq\cent H{T_0}=X\).
Thus,  set $U/Y = \o H \cap \Gamma_0(N)$. We clearly have that \(\o X\) centralizes \(\o T_0\), so Lemma~\ref{ppd} yields \(\o X\leq \o H \cap \Gamma_0(N)\). On the other hand we get
$[U,T_0] \leq Y \cap T_0$ because, again by Lemma~\ref{ppd}, \(\o T_0\) is contained in the cyclic group \(\o U\); thus $U \leq \cent H{T_0} = X$, and so $\o X= \o H \cap \Gamma_0(N)$.\end{proof}

\smallskip
\item  \emph{The subgraphs of $\Delta(G)$ induced on $\pi_1 = \{ p\} \cup
\pi(X)$ and $\pi_2 = \pi(m)$ are  complete graphs. Hence, in
particular, $\pi(X) \cap \pi(m) = \emptyset$ and $X$ has a complement $D$ in $H$.}

\begin{proof}
As $H \leq \Gamma(V)$ and $X=\fit  H$, then $H/X$ is nilpotent (in fact, cyclic); thus,
Lemma~\ref{brodkey} implies that both $\pi(X)$ and $\pi(H/X)= \pi(m)$
induce complete subgraphs of $\Delta(G)$.

It remains to show that $(p,t) \in \E G$ for all $t \in \pi(X)$. Let
$T \in \syl tX$; as $[V, T] \leq V$ is $X$-invariant and
non-trivial, we see that $[V, T] = V$. If $(p,t) \not\in \E G$
then, since \(PT\nor G\), the  graph $\Delta(PT)$ is disconnected, and \(PT\) is as in case (a) of Theorem~\ref{class}. In particular,  $N \leq \cent PT$ and every non-linear irreducible character of \(P\) is fully ramified with respect to \(P/\cent P T\). On the other hand, by coprimality, $\cent PT/N = \cent {V}T$ is trivial,  and in fact every non-linear irreducible character of $P$ is fully ramified with respect to  $P/N.$ In this setting, an application of Lemma~\ref{f.r.} yields the contradiction
$|P/N| \geq |N|^2$.
\end{proof}

\smallskip
\item \emph{The graph \(\Delta(G/N)\) is disconnected with connected components
$\pi(X)$ and $\pi(D) = \pi (m)$, where $D$ is a complement for $X$ in $H$.
Moreover, $(p^n -1)/(p^{n/|D|} -1)$ divides $|X|$.}
\begin{proof}
Arguing by contradiction, assume that $\Delta(G/N)$ is connected. Then, by  Theorem~\ref{Zuccari}, $\Delta(G/N)$ has diameter (at most) two, and therefore a pair of vertices at distance $3$ in $\Delta (G)$ must include the prime $p$. Let $(p,v)$ be such a pair and 
$p- t- r- v$ a shortest path connecting them; then step (e) yields $r,v \in \pi(D)=\pi (m)$ and $t \in \pi(X)$. Take now any \(\lambda\in\widehat{N}\setminus\{1\}\); then $p$ divides $
\theta (1)$ whenever \(\theta\) lies in \(\irr{P|\lambda}\), and if \(r\) divides \(|\o H:\cent{\o H}\lambda|\), then \(r\) divides \(|\o H:\cent{\o H}\theta|\), and so \(pr\) divides \(\chi(1)\) for every \(\chi\in\irr{G|\theta}\), a contradiction. Therefore, Lemma~\ref{semilinear0} yields that $k_r = (p^n
-1)/(p^{n/|R|} -1)$ divides \(|\o X|\), where \(R\) is a Sylow \(r\)-subgroup of \(H\).

On the other hand, consider $\chi \in \irr G$ such that $tr$ divides $\chi(1)$. Then $p$
does not divide $\chi(1)$, whence $\chi \in \irr{G/N}$. Let $\mu\in\irr{P/N}$ be an irreducible constituent of \(\chi_P\); note that $\mu \neq 1_{P/N}$, as $t\mid\chi(1)$.
Since $r\mid\chi(1)$, then $r$ divides $|H:I_H(\mu)|$, and again
Lemma~\ref{semilinear0} implies that $k_r$ does not divide $|X|$, against what observed in the previous paragraph.

Thus, $\Delta(G/N)$ is disconnected and, by Lemma~\ref{brodkey}, it is clear that
its connected components are $\pi(X)$  and $\pi(D)$.
Consider now $\mu \in \irr{P/N} \setminus \{ 1_{P/N}\}$; as $X$ acts fixed-point freely on
$P/N$, we have that $I_H(\mu) \cap X = 1$, hence, for every $\chi \in \irr{G|\mu}$,
the degree of $\chi$ is divisible by all the primes in $\pi(X)$.
As a consequence, $I_H(\mu)$  must contain a conjugate of $D$ and the last claim
follows by Lemma~\ref{semilinear0}.
\end{proof}

\smallskip
\item \emph{Let $\lambda \in\widehat{N}\setminus\{1\}$.
Then there exists $\theta_0 \in \irr{P|\lambda}$ such that $I_H(\theta_0) = \cent H{\lambda}$.
For every $\theta \in \irr{P|\lambda}\setminus\{\theta_0\}$, we have  $I_H(\theta) \cap X = 1$, and \(I_H(\theta)\) contains a complement \(D\) for \(X\) in \(H\).
}

\begin{proof}
The first claim follows from~\cite[Theorem 13.28]{I}, and from the fact that $I_H(\theta) \leq \cent  H{\lambda}$ for every $\theta \in \irr{P |\lambda}$,
as $N \leq \zent P$.

Consider now a character $\theta \in \irr{P|\lambda}\setminus\{\theta_0\}$.
Recalling that $X/Y$ acts fixed-point freely on $\widehat{N}$, in order to prove
that
$I_H(\theta) \cap X = 1$ it is enough to
show that $Y_0 =I_H(\theta) \cap Y = 1$.
If $Y_0 >1$, then \(\cent V{Y_0}\) cannot be the whole \(V\); since \(V\) is irreducible \(X\)-module and $\cent{V}{Y_0}$ is \(X\)-invariant,  \(\cent V{Y_0}\) is trivial and, by~\cite[Exercise 13.10]{I}, $\theta_0$ is the only character in $\irr{P|\lambda}$ which
is $Y_0$-invariant, a clear contradiction. We conclude that $Y_0 = 1$.

Now, if \(I_H(\theta)\) does not contain any complement for \(X\) in \(H\) (i.e., any Hall \(\pi(m)\)-subgroup of \(H\)), then there exists a prime \(r\in\pi(m)\) which does not divide \(|H:I_H(\theta)|\); as a consequence, any \(\chi\in\irr{G|\theta}\) would be such that \(pr|X|\) divides \(\chi(1)\). This yields a contradiction, as \(r\) would be a complete vertex of \(\Delta(G)\), and also the last claim is proved.
\end{proof}

For the next two steps of the proof, it will be convenient to introduce some specific notation. We define $\nfr N$ as the set of all $\lambda \in \widehat{N}\setminus\{1\}$ that are not fully ramified in \(P\); since $|P/N| = |N|$, Lemma~\ref{f.r.} ensures that \(\nfr N\) is not empty. We shall also take into account Remark~\ref{fullyramified} and the notation introduced therein; in particular recall that, for \(\lambda\in\widehat{N}\), the subgroup \(Z_\lambda\) is defined by \(Z_\lambda/\ker\lambda=\zent{P/\ker\lambda}\).

\smallskip
\item \label{Z}
\emph{Let $\lambda$ be in $\nfr N$, and set $M = Z_{\lambda}$.
Then the following conclusions hold.
\begin{enumeraten}
\item There exists a complement $D$ of $X$ in $H$ such that
$\cent H{\lambda} = YD$.
\item \(YD\) normalizes \(M\) and, for every $a \in M\setminus N$, we have
$\cent {YD}{aN} = D^y$ for some $y \in Y$.
\item If \(D_1\) is a complement for \(X\) in \(H\) such that \(\cent{M/N}{D_1}\) is non-trivial, then \(D_1\leq YD\).
\item If $Y \not\leq \zent H$, then
$YD/\cent{YD}{M/N} \leq \Gamma(M/N)$ and $M/N$ is an irreducible $Y$-module.
\end{enumeraten}}

\begin{proof}
Since \(\lambda\) is not fully ramified in \(P\), then \(|\irr{P|\lambda}|>1\), and by (f) we  find \(\theta\in\irr{P|\lambda}\) such that \(I_H(\theta)\) contains a complement \(D\) for \(X\) in \(H\). Now, \(I_H(\theta)\) is contained in \(\cent H \lambda\) because \(\theta_N\) is a multiple of \(\lambda\), and therefore \(D\leq\cent H{\lambda}\). Clearly \(Y\) lies in \(\cent H{\lambda}\) as well, and since $X/Y$ acts fixed point freely on $\widehat{N}$ (and \(\lambda\neq 1\)), we deduce that \(\cent H{\lambda}\) is in fact \(YD\).

As for (ii), observe that \(YD=\cent H\lambda\) normalizes \(\ker\lambda\), so it acts on \(P/\ker\lambda\) and on \(\zent{P/\ker\lambda}=M/\ker\lambda\) as well; as a consequence, \(\cent H\lambda\) normalizes \(M\). Since \(\lambda\in\irr{N/\ker\lambda}\) and \(N/\ker\lambda\) is a subgroup of the abelian group \(M/\ker\lambda\), we have that \(\lambda\) extends to \(M\), and in fact \(\irr{M|\lambda}\) consists of extensions of \(\lambda\). By \cite[ Theorem 13.28]{I}, among those extensions we can choose \(\mu_0\) that is \(\cent H\lambda\)-invariant, and we can write \(\irr{M|\lambda}=\{\mu_0\rho\;|\;\rho\in\widehat{M/N}\}\). Consider now \(\rho\in\widehat{M/N}\), and take \(\theta\in\irr{P|\mu_0\rho}\). Since \(\theta\) lies in \(\irr{P|\lambda}\), step (f) together with the previous paragraph yield that \(I_H(\theta)\) contains \(D^x\) for some \(x\in X\); but \(I_H(\theta)\leq \cent H\lambda=YD\), therefore there exists \(y\in Y\) such that \(D^x=D^y\). Moreover,  recalling that \(\theta_M\) is a multiple of \(\mu_0\rho\), we get \(D^y\leq I_{YD}(\theta)\leq I_{YD}(\mu_0\rho)\) and, as \(\mu_0\) is \(D^y\)-invariant, we easily deduce that \(D^y\) lies in \(\cent {YD}\rho\) as well. If \(\rho\neq 1\), taking into account that \(X\) acts fixed-point freely on \(P/N\), we also have \(\cent {YD}\rho\cap X=1\). Thus \(\cent {YD}\rho=D^y\). Now, by coprimality, (ii) follows.

Assume now that \(D_1\) is a complement for \(X\) in \(H\) such that \(aN\in \cent{M/N}{D_1}\), where \(a\) lies in \(M\setminus N\). By (ii), \(aN\) is also centralized by \(D^y\) for a suitable \(y\in Y\); but, as \(X\cap\cent H {aN}=1\), \(\cent H {aN}\simeq\cent H{aN}X/X\) is cyclic, and therefore \(D_1=D^y\leq YD\).

Finally, if \(Y\not\leq\zent H\), then \([Y,D]\neq 1\) and therefore \(D\) does not lie in \(\cent{YD}{M/N}\) (otherwise, as \(Y\) acts fixed-point freely on \(M/N\), we would have \(D=\cent{YD}{M/N}\nor YD\)), so (iv) follows by Lemma~\ref{semilinear1}.
\end{proof}

\item\label{YinCenter}  $Y \leq \zent H$.

\begin{proof}
Aiming at a contradiction, let us assume $[Y,H] \neq  1$. Thus, as $Y\le X$ and $X$ is abelian,  $[Y,D] \neq 1$ for any complement $D$ for $X$ in $H$.

Observe first that every $x \in P$ is contained in a subgroup $Z_{\lambda}$ for
some $\lambda \in \nfr N$. In fact, as this clearly holds for \(x\in N\), let us focus on an element \(x\not\in N\). Since \(\cent P x\supseteq N\langle x\rangle\supset N\), we get \(|[P,x]|=|P|/|\cent P x|<|P|/|N|=|N|\), and it is enough to choose a non-trivial \(\lambda\in\irr{N/[P,x]}\) in order to have \(x\in Z_\lambda\) (with \(\lambda\in\nfr N\), as \(x\in Z_\lambda\) and so \(Z_\lambda>N\)).
Observe also that, if $Z_{\lambda_1} \neq Z_{\lambda_2}$ for \(\lambda_1,\lambda_2\in\nfr N\), then $Z_{\lambda_1} \cap Z_{\lambda_2} = N$, since
both $Z_{\lambda_1}/N$ and $Z_{\lambda_2}/N$  are irreducible $Y$-modules by (g).

Given $\lambda \in \nfr N$, we set \(M=Z_\lambda\) and we denote by $\Xi(M)$ the dual group
of $N/[P, M]$ (as a subgroup of the dual group $\widehat N$ of $N$). Note that if \(\mu\neq 1_N\) lies in \(\Xi(M)\), then one has \(M\leq Z_\mu\), thus \(\mu\) lies  in \(\nfr N\) and so in fact, by irreducibility of the \(Y\)-module \(Z_\mu/N\),  \(M=Z_\mu\). In other words, we have \(\Xi(M)\setminus\{1_N\}=\{ \mu \in \nfr N: Z_{\mu} = M\}\).
It is then clear that 
$\Xi(M) \cap \Xi(Z_{\nu}) = \{ 1_N \}$,  if $Z_{\nu} \neq M$ for some $\nu \in \nfr N$. 

Also, given a complement $D_1$ for $X$ in $H$,  if $\cent{M/N}{D_1} >1$
then, for every $\lambda \in \Xi(M)$, by (g) $D_1 \leq \cent H{\lambda}$ and hence
$\Xi(M) \leq \cent{\widehat N}{D_1}$.

So, we set $\cent H{\Xi(M)} = YD$, for a suitable complement $D$ of $X$  in $H$.  
Now set $|M/N| = p^t$ (note that, by Clifford's Theorem, \(t\) does not depend on \(\lambda\in\nfr N\), as $M/N$ is an irreducible $Y$-module and $Y \nor H$), and let $L/N$ be a complement for the irreducible \(Y\)-module $M/N$ in
$P/N$.
Observe that, for $a, b \in P$ and $y \in Y$, we have
$$[a^y, b] = [a^y, b]^{y^{-1}} = [a, b^{y^{-1}}].$$
Therefore, for any $b \in M \setminus N$,
\begin{equation}
[L, M] = \langle [a,b^y]: a\in L,\; y \in Y\rangle =
\langle [a^{y^{-1}},b] : a\in L,\; y \in Y\rangle = [L, b].
\end{equation}
As a consequence, we get
$|[L, M]| = |L|/|\cent Lb| \leq |L/N| = p^{n-t}$;
taking into account that $|N| = p^n$, this yields
$|N/[L, M]| \geq p^t = |M/N|$.

Now, by Lemma~\ref{f.r.}, there are at least
$p^{t/2}$ characters in $\widehat{N/[L, M]}$ that are not fully ramified
in $M/[L, M]$. We claim that all the non-trivial characters in $\widehat{N/[L, M]}$ that are not fully ramified
in $M/[L, M]$, are in $\Xi(M)$.
In fact, given a $\nu \in \widehat{N/[L, M]}$ such that $\nu$ is not
fully ramified in $M/[L, M]$, then (recalling Remark~\ref{fullyramified}) there exists an element $b \in M \setminus N$
such that $[M, b] \leq \ker{\nu}$. As $[L, M] \leq \ker{\nu}$, then
$[P, b] = [LM, b] \leq \ker{\nu}$.
As in (1) (with \(P\) in place of \(L\)) one sees that $[P, b] = [P, M]$, and the claim is proved.
In particular, we get $|\Xi(M)| \geq p^{t/2} - 1$.

Finally, set $d = |YD:\cent{YD}{M/N}|$. Since, by point (g),  $YD/\cent{YD}{M/N}$ is an irreducible subgroup of $\Gamma(M/N)$, we have $|\cent{M/N}D| = p^{t/d}$, where $d = |D:C_D(M/N)|$.
Noting  that, by what observed before,
$$\mathcal{Z} = \{(Z_{\lambda}/N)\setminus\{1\} : \lambda \in \nfr N\;\; {\rm{ and }}\;\;
\cent{Z_{\lambda}/N}D > 1 \}$$ is a partition of $\cent{P/N}D\setminus\{1\}$, we conclude that
$$ |\mathcal{Z}| = \dfrac{|\cent{P/N}D|-1}{p^{t/d} -1} =\dfrac{p^{n/m} -1}{p^{t/d} -1}.$$
Since, as observed, $\Xi(Z_{\lambda}) \leq \cent{\widehat{N}}D$ for
every $Z_{\lambda}$ such that $(Z_{\lambda}/N)\setminus\{1\} \in \mathcal{Z}$, and $|\Xi(M)| \le p^{t/2}-1$ , we deduce $$p^{n/m}-1 =  |\cent{\widehat{N}}D\setminus\{1\}| \geq
\frac{p^{n/m} -1}{p^{t/d} -1} \cdot (p^{t/2} -1).$$
Hence $p^{t/2}-1 \leq p^{t/d} -1$, which implies $d = 2$ (so \(|D|\) is even). In particular \(p\neq 2\), thus \(p^n-1\) is an even number as well as \(|D|\). 
But, by  Lemma~\ref{semilinear0} the numbers \(p^n-1\) and \(|D|\) must be coprime. This contradiction completes step~(h).
\end{proof}

\medskip
\item  \emph{Final  contradiction.}

\begin{proof}
By step (f),  the graph $\Delta(G/N)$ is disconnected  with connected components $\pi(X)$ and
$\pi(D)$, where $D$ is a complement for $X$ in $H$.

Since step (h) yields $Y \leq \cent XD$, we have that $|Y|$ divides $p^{n/|D|} -1$.
Now,  $p^{n/|D|} -1$ is coprime to $k = (p^n -1)/(p^{n/|D|} -1)$ and, as \(k\) divides \(|X|\) by (f), we see that $k$ divides the order of $\o X = \o H \cap \Gamma_0(\widehat{N})$.
Thus Lemma~\ref{semilinear0} yields that, for every $\lambda \in \widehat{N}$, a conjugate
of $D$ lies in $\cent H{\lambda}$. In particular, for every $\lambda \in \widehat{N} \setminus \{ 1 \}$, we get
$\cent H{\lambda} = Y \times D^x$ for a suitable $x \in X$.

Now, let $\chi \in \irr{G}$ be such that \(\chi_N\) has a non-trivial irreducible constituent \(\lambda\), and let $\theta \in \irr{P|\lambda}$ be an irreducible constituent of $\chi_P$.
Thus, $I_H(\theta) \leq \cent H{\lambda} = Y \times D^x$ (for some $x \in X$) is cyclic.
So, as $\chi(1) = \psi(1)|G:I_G(\theta)|$ for a suitable
$\psi \in \irr{I_G(\theta)|\theta}$, we conclude that
$\chi(1) =  \theta(1)|H:I_H(\theta)|$ and hence, taking into account that \(I_H(\theta)\) contains a conjugate of \(D\) by step (f), the prime divisors of $\chi(1)$ lie in $\{p\} \cup \pi(X)$.
Therefore $\Delta(G)$ is disconnected, a contradiction.
\end{proof}
 \end{enumeratei}

The proof is now complete. \end{proof}

The following result, together with Remark~\ref{spiegazione} and Lemma~\ref{C} for what concerns the dimension $n$ of the factors $M_i$,  will yield Theorem~A and, as a by-product,  Theorem~C. For this reason we do not include an independent proof for Theorem~C, that could be obtained with a direct and much shorter argument.

\begin{theorem}
\label{main}
Let $G$ be a solvable group such that either \(\Delta(G)\) is connected of diameter \(3\), or \(\Delta(G)\) is disconnected. In the disconnected case, assume also that \(F=\fit G\) is non-abelian and that, whenever \(\oh r G\) is  non-abelian, the prime \(r\) is not an isolated vertex of \(\Delta(G)\). Then the following conclusions hold.
\begin{enumeratei}
\item Let $p$ be a prime. If $\oh p G$ is non-abelian, then it is a Sylow $p$-subgroup of $G$.
\item There exists a unique prime $p$ such that $P=\oh p G$ is non-abelian.
 Also, denoting by $U$ the  $p$-complement of $F$, we have $U\leq\zent G$.
\item $\Delta (G/\gamma_3(P))$ is disconnected, and \(G/F\) is a non-nilpotent group whose Sylow subgroups are all cyclic. If $c$ is the nilpotency class of $P$, all factors $M_1 = [P,G]/P'$ and $M_i = \gamma_i(P)/\gamma_{i+1}(P)$, for $
2\le i\le c$, are chief factors of $G$ of the same order $p^n$. Moreover, for all $1\le i\le c$,  $G/\cent G{M_{i}}$ embeds in $\Gamma (p^n)$ as an irreducible subgroup.
\end{enumeratei}
\end{theorem}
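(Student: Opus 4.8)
The spine of the argument is Proposition~\ref{maxiemendamento}, which pins down exactly the situation in which the Fitting subgroup is itself a non-abelian Sylow $p$-subgroup $P$ with $P'$ minimal normal; the whole plan is to reduce the two cases in the hypothesis to that configuration (or to a convenient quotient of it) and then read off (a)--(c), climbing the lower central series of $P$ with Lemma~\ref{C}. First I would unify the hypotheses: in the connected diameter-$3$ case Theorem~\ref{Zuccari} forces $\fit G$ to be non-abelian, and connectedness (with more than one vertex) forbids isolated vertices, so in both cases we may assume $\diam{\Delta(G)}\ge 3$, that $F=\fit G$ is non-abelian, and that no prime $r$ with $\oh r G$ non-abelian is isolated. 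Fix a prime $p$ with $P=\oh pG$ non-abelian, so that $p$ is not isolated.

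For part (a) I would split according to the two cases. In the disconnected case the conclusion is essentially classification: since $F$ is non-abelian and the non-abelian $\oh pG$ is not isolated, Theorem~\ref{class} places $G$ in type (c), where $P$ is already a non-abelian Sylow $p$-subgroup; moreover Lemma~\ref{lewis} shows there is a unique prime with $\oh{}{}$ non-central, so no other $\oh qG$ is non-abelian. In the connected diameter-$3$ case, (a) is precisely the role of Proposition~\ref{prop9'}: I would choose a normal subgroup $N$ of $G$ maximal with $N<P'$, so that in $G/N$ the image $(P/N)'=P'/N$ is minimal normal, and then apply Proposition~\ref{prop9'} to deduce that $P/N$ (hence $P$) is Sylow, once $\Delta((G/N)/(P'/N))=\Delta(G/P')$ is known to be disconnected. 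The delicate step is that passing to $G/N$ can lower the diameter, so one must check that the non-adjacency witnessing distance $3$ in $\Delta(G)$ survives into $\Delta(G/N)$ and that $\Delta(G/P')$ is genuinely disconnected; this bookkeeping, not any single computation, is the content of (a).

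For part (b), once (a) holds every non-abelian $\oh qG$ is Sylow and hence non-central, so Lemma~\ref{lewis} (directly in the disconnected case, and via the disconnectedness of $\Delta(G/\gamma_3(P))$ obtained below in the connected case) yields uniqueness of $p$; thus $F=P\times U$ with $U=\oh{p'}F$ abelian. To get $U\le\zent G$ I would apply Lemma~\ref{lemma8} with $M=P$ (here $p\nmid|G:F|$ since $P$ is Sylow, and the required character extensions come from the cyclic Sylow structure of $G/F$ together with coprimality): its alternative conclusion $d_G(p,v)\le 2$ for all $v$ is incompatible with $\diam{\Delta(G)}\ge 3$, immediately in the disconnected case (some vertex is at distance $\infty$ from $p$) and after the reduction of (a) in the connected case. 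With $U\le\zent G$ in hand, Lemma~\ref{U} lets me replace $G$ by $G/U$ and assume $\fit G=P$ is a non-abelian normal Sylow $p$-subgroup.

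For part (c), I would first read off from Theorem~\ref{class} (types (b)/(c)) that $G/F$ has cyclic Sylow subgroups and is non-nilpotent (nilpotency would kill the second graph component and hence diameter $3$). The chief-factor data then splits in two. The upper factors $M_i=\gamma_i(P)/\gamma_{i+1}(P)$ for $2\le i\le c$ are handled by Lemma~\ref{C}, applied with a prime $s\in\pi(H/\fit H)$ non-adjacent to $p$ (whose existence is exactly what $\diam{\Delta(G)}\ge 3$ provides, via Lemma~\ref{brodkey}), giving that they are chief factors of a common order $p^n$ with $G/\cent G{M_i}\le\Gamma(p^n)$. The bottom factor $M_1=[P,G]/P'$, together with the disconnectedness of $\Delta(G/\gamma_3(P))$, I would obtain by applying Proposition~\ref{maxiemendamento} to $\bar G=G/\gamma_3(P)$: there $\bar P$ has class $2$, $\bar P'=M_2$ is the chief factor just produced, and $\fit{\bar G}=\bar P$, so the proposition gives $\Delta(\bar G)$ disconnected and $[\bar P,\bar G]/\bar P'=M_1$ a chief factor of order $|M_2|=p^n$ with $H\le\Gamma(M_1)$, matching all orders. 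The main obstacle throughout is verifying that the diameter-three/disconnectedness and non-isolation hypotheses of Propositions~\ref{maxiemendamento?} and~\ref{maxiemendamento} really do persist in the quotient $\bar G=G/\gamma_3(P)$ (and in the quotients used in (a)), since the degree graph can only shrink under such quotients; this is where the non-adjacency of $p$ and $s$ must be propagated with care.
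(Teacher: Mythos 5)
Your toolkit is the paper's own (Theorem~\ref{class}, Propositions~\ref{prop9'} and~\ref{maxiemendamento}, Lemmas~\ref{C}, \ref{lemma8}, \ref{U}), but your plan omits the device that makes all of these applications legitimate: the paper proves (a), (b), (c) \emph{simultaneously, by induction on $|G|$} (minimal counterexample), so that the full statement of the theorem can be invoked on proper quotients such as $G/M$, $G/P'$, $G/\gamma_3(P)$. Without that, the steps you label ``bookkeeping'' are genuine holes. Most seriously, in part (a), connected case, you propose to apply Proposition~\ref{prop9'} ``once $\Delta(G/P')$ is known to be disconnected''. This cannot be ``checked'': a priori $\Delta(G/P')$ may be connected, and its disconnectedness is essentially a \emph{conclusion} of the theorem, not an available hypothesis. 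The paper splits into two cases; when $\Delta(G/P')$ is connected it runs a completely different argument, applying the inductive hypothesis (a)--(c) to the smaller group $G/P'$ (legitimate because $\Delta(G/P')$ is then a connected subgraph of $\Delta(G)$ on the same vertex set, hence again of diameter $3$), deducing that $P/P'$ is central in $G/P'$ and that $G/F$ has cyclic Sylow subgroups, and then obtaining a contradiction from Lemma~\ref{hyper2} (the prime $p$ would be a complete vertex). Nothing in your proposal covers this case.

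Two further steps fail as written. In (b), your uniqueness of $p$ rests on the disconnectedness of $\Delta(G/\gamma_3(P))$ ``obtained below'' in (c), while your (c) uses (b) to pass to $G/U$: this is circular. The paper proves uniqueness independently: if $\oh pG$ and $\oh qG$ were both non-abelian, the inductive hypothesis applied to the proper quotients $G/P'$ and $G/Q'$ forces both of those graphs to have diameter at most $2$, so the only candidate distance-$3$ pair is $\{p,q\}$, and these two vertices are adjacent. Moreover, your contradiction from Lemma~\ref{lemma8} is wrong in the connected case: the alternative conclusion $d_G(p,v)\le 2$ for all $v$ is \emph{not} incompatible with $\diam{\Delta(G)}=3$ --- indeed Remark~\ref{spiegazione} shows that this inequality always holds in the diameter-three situation. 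The paper instead argues that a distance-$3$ pair would then avoid $p$, forcing $\diam{\Delta(G/P')}\ge 3$ with $\fit{G/P'}$ abelian, against Theorem~\ref{Zuccari}. Finally, in (c) your order of deductions deprives Lemma~\ref{C} of its hypothesis: Lemma~\ref{brodkey} only makes $\pi(\fit H)$ a clique; it does not make $p$ adjacent to every prime of $\pi(\fit H)$, so diameter $3$ alone does not place a vertex $s$ non-adjacent to $p$ inside $\pi(H/\fit H)$. In the paper, the disconnectedness of $\Delta(G/\gamma_3(P))$ comes \emph{first} (by induction, reducing to $\gamma_3(P)=1$ and $P'$ minimal normal, then Proposition~\ref{maxiemendamento}); the resulting type (c) structure of Theorem~\ref{class} is what shows $p$ is adjacent to all of $\pi(\fit H)$, and only then can $s$ be located in $\pi(H/\fit H)$ and Lemma~\ref{C} be applied. (By contrast, the non-isolation of $p$ in the quotient graphs, which you single out as the main obstacle, is the easy point: if $p$ were isolated in the disconnected graph of such a quotient, the remaining vertices would form a clique in $\Delta(G)$, and any edge at $p$ would then give $\diam{\Delta(G)}\le 2$.)
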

\begin{proof} Let $G$ be as in the assumptions and $F = \fit G$. Observe that if $\Delta (G)$ is connected then, by Theorem~\ref{Zuccari}, there exists a prime \(p\)
such that \(P=\oh p G\) is non-abelian (thus $F$ is not abelian in any case). 
We argue by induction, and thus assume that $G$ is a counterexample of minimal order. 

If $M$ is a normal subgroup of $G$ such that $M \leq \frat G$, then the Fitting series of $G/M$ is the image of the Fitting series of $G$ under the natural homomorphism onto $G/M$. 
Moreover, if $\fit{G/M} = F/M$ is non-abelian and \(\V G=\V{G/M}\), then all assumptions on $G$ are inherited by $G/M$. 

\smallskip
\noindent Observe first that, if \(\Delta(G)\) is disconnected, then \(G\) is as in (c) of Theorem~\ref{class}; in particular, parts (a) and (b) are already known to be true.  

Thus, as it concerns the proof of (a) and (b), we may assume that \(\Delta(G)\) is connected. Let $p$ be a prime such that \(P=\oh p G\) is non-abelian. Setting \(N=P'\), observe
that $N\leq\frat G$.  Then, in order to prove (a) and (b), we may assume that $N$ is a minimal normal subgroup of $G$. In fact, if $M$  is a normal subgroup
of $G$ such that $1 < M < N$, then $\oh p {G/M} = P/M$ is non--abelian and \(\V G=\V{G/M}\), whence, as observed before, (a) and (b) hold in $G/M$.  In particular, $P$ is a Sylow $p$-subgroup of $G$, and so (a) holds in $G$. 
Also, if $U$ is the $p$-complement of $F$, then $ UM/M$ is the $p$-complement of $\fit{G/M}$; property (b) in $G/M$ and normality of $U$ in $G$ yield $[U, G] \leq M \cap U = 1$, and (b) holds true in $G$. 

Assuming thus $N$ to be a minimal normal subgroup of $G$, we start proving claims (a) and (b) for $G$ under the additional
hypothesis that \(\Delta(G/N)\) is disconnected. 
Then, again denoting by \(U\)
the $p$-complement of \(F\), Proposition~\ref{prop9'} yields  \(P/N\not\leq\zent{G/N}\), and Lemma~\ref{lewis} ensures that \(F/N\) is abelian and \(UN/N\) is central in \(G/N\). Thus, we get \([G,U]\leq N\cap
U=1\), and (b) is proved in $G$. Since Proposition~\ref{prop9'} also
ensures that \(p\nmid|G/P|\), then (a) is achieved as well.

For the proof of (a) and (b), we may therefore assume that  \(\Delta(G/\oh
p G')\) is connected for every prime \(p\) such that \(\oh p G\) is
non-abelian.
Thus,  let \(p\) be  such a prime and, again, write \(P=\oh p G\) and
 \(N=P'\). 
 
Suppose, by contradiction, that (a) does not hold in $G$, that is, \(p\) divides \(|G/P|\).
Then \(\V{G/N}=\V{G}\) and, since \(\Delta(G/N)\)
is connected, diam(\(\Delta(G/N)\))=\(3\). In
particular, \(F/N\) is non-abelian by  Theorem~\ref{Zuccari}, and therefore there
exists a prime \(q\ne p\) such that \(\oh q{G/N}\) is non-abelian.  Now, $G/N$ satisfies the hypotheses of the Theorem and so, by choice of $G$, $G/N$ satisfies (a), (b) and (c): in particular, \(P/N\) is
central in \(G/N\) and all the Sylow subgroups of \(G/F\) are cyclic. 
Setting \(R=\oh{p'}{F}\),  and taking any  
\(\theta\) in \(\irr P\), we have, as in the second paragraph of the proof of Proposition~\ref{prop9'}, that
\(\theta\times 1_R\) is an extension of \(\theta\) to \(F\), such
that \(I_G(\theta\times 1_R)=I_G(\theta)\). Since the Sylow
subgroups of $G/F$ are cyclic, \(\theta\times 1_R\) (and therefore
\(\theta\)) extends to \(I=I_G(\theta)\). We may then apply
Lemma~\ref{hyper2} and get the contradiction that  \(p\) is a complete vertex of
\(\Delta(G)\). Hence, $G$ satisfies (a).

We move next to (b). First, we prove the following claim: \emph{if
\(p\) and \(q\) are two different primes such that both \(P=\oh p
G\) and \(Q=\oh q G\) are non-abelian, then the diameters of
\(\Delta(G/P')\) and \(\Delta(G/Q')\) are both at most \(2\).} (Note
that the hypothesis of this claim forces \(p\) and \(q\) to be
adjacent in \(\Delta(G)\).) In fact, assume that one of those graphs (which are connected by what proved before),
\(\Delta(G/P')\) say, has diameter \(3\). Then, arguing as in the last paragraph, 
we have that  \(P/P'\) is central in \(G/P'\). Let \(H\) be a
 $p$-complement of \(G\), then $G = PH$ by (a) and \(H\) centralizes
\(P/P'\), hence $H$ centralizes \(P\) by coprimality. Therefore,
\(G=P\times H\), so \(p\) is a complete vertex of \(\Delta(G)\),
which a contradiction. 
We thus conclude that the diameters of \(\Delta(G/P')\) and \(\Delta(G/Q')\) are
both at most \(2\), as claimed. 
But in this situation, the only
vertices of \(\Delta(G)\) that may have distance \(3\) between each
other turn out to be \(p\) and \(q\), which on the other hand are clearly adjacent.

Thus, we have proved that there exists a unique prime \(p\) such
that \(\oh p G\) is non-abelian. It remains to show that
 the $p$-complement \(U\) of \(F\) is central in $G$.

We claim  that every irreducible
character of \(U\) extends to its inertia subgroup in \(G\). Since $U$ is an abelian normal subgroup of $G$, this is certainly the case if $U$ admits a complement. Otherwise, \(N_0=U\cap\frat G \neq 1\). In this case,
\(\fit{G/N_0}\) is clearly non-abelian and \(\V{G/N_0}=\V{G}\). Thus \(G/N_0\) inherits our assumptions and,  by
choice of $G$, the Sylow subgroups
of \(G/F\) are cyclic, a fact ensuring that also in this case
every irreducible
character of \(U\) extends to its inertia subgroup in~\(G\). 

We are therefore in a position to apply Lemma~\ref{lemma8} and get the desired conclusion unless
 $p$ has distance at most $2$ from every other vertex of
$\Delta(G)$. But this would force (as we know that is \(\Delta(G/N)\)
connected)  $\Delta(G/N)$ to have diameter
$3$, which is against  Theorem~\ref{Zuccari}, because the Fitting
subgroup of $G/N$ is abelian. The proof that (b) holds in $G$  is complete.

\smallskip
Finally,  we prove that (c) holds in $G$. Let $U$ be the $p$-complement of $F$
(thus $U \leq \zent G$ by part~(b)); since $\fit{G/U} = F /
U=PU/U$ is non-abelian, the graph $\Delta(G/U)$ has the same set of vertices as
$\Delta(G)$, and therefore \(G/U\) inherits the assumptions. Furthermore, the projection $G
\rightarrow G/U$ induces a $G$-isomorphism  $P \rightarrow PU/U$,
and $\gamma_i(PU/U) = \gamma_i(P)U/U$.

We claim that, by choice of $G$,  \(U\) is trivial. In fact, suppose \(U\neq 1\); then, by induction on the order of the
group, the conclusions concerning the factors \(M_i\) and the actions of the groups \(G/\cent G{M_i}\) on them are easily achieved, and we
also get that $\Delta(G/\gamma_3(P)U)$ is disconnected. Now, since
\(U\leq\zent G\) and \(\gamma_3(P)\leq\frat G\), we have
\(\fit{G/\gamma_3(P)U}=F/\gamma_3(P)U\). As the Sylow
\(p\)-subgroup of this nilpotent factor group is non-abelian, we get
\(\V{G/\gamma_3(P)U}=\V G\), whence
\(G/\gamma_3(P)U\) is a group as in (c) of Theorem~\ref{class}. In particular, \(G/F\) is a non-nilpotent group whose Sylow subgroups are all cyclic, and
every irreducible character of \(\fit{G/\gamma_3(P)}=F/\gamma_3(P)\) extends to its inertia subgroup in
\(G/\gamma_3(P)\). We are then in a position to apply Lemma~\ref{U} to
obtain the identity  $\Delta(G/\gamma_3(P))=\Delta(G/\gamma_3(P)U)$, and so
\(\Delta(G/\gamma_3(P))\) is disconnected, as wanted. Thus, $U=1$.

We now observe that we may further reduce to the case \(\gamma_3(P)=1\). In fact, suppose \(\gamma_3(P)\ne 1\); then,  setting \(\overline{G}=G/\gamma_3(P)\) and adopting the bar convention,  we have  \(\V{\overline G}=\V G\) and the group \(\overline{G}\) satisfies our hypotheses. Hence, the choice of $G$ 
yields that \(\Delta(\overline{G})\) is disconnected, and the following conclusions follow: \(G/F\) is a non-nilpotent group whose Sylow subgroups are all cyclic, \(M_1\) and \(M_2\) are \(G\)-chief factors of the same order \(p^n\), and \(G/\cent G{M_i}\) embeds in \(\Gamma(p^n)\) as an irreducible subgroup for \(i\in\{1,2\}\). Finally, as \(G/\gamma_3(P)\) is now as in (c) of Theorem~\ref{class}, the prime \(p\) turns out to be adjacent in \(\Delta(G)\) to every prime divisor of \(|\fit H|\) (it cannot therefore be adjacent
to all vertices in \(|H/\fit H|\), as otherwise it would be a
complete vertex in \(\Delta(G)\)). An application of Lemma~\ref{C}
yields now what is left of claim (c) for $G$.

Then $\gamma_3(P)=1$. Suppose, by contradiction, that there exists   \(M\nor G\) with
\(1<M<P'\). Then the graph \(\Delta(G/M)\) is disconnected: in fact, the group \(G/M\) satisfies our hypotheses and therefore, setting \(\overline{G}=G/M\) and adopting again the bar
convention, we have that
\(\Delta(\overline{G}/\gamma_3(\overline{P}))\) is disconnected; but
\(\gamma_3(\overline{P})\) is trivial, so
\(\Delta(\overline{G}/\gamma_3(\overline{P}))=\Delta(G/M)\).
Now the factor group \(G/M\) must be as in Theorem~\ref{class}(c), hence $G$ fulfills the assumptions of Lemma~\ref{C}, an application of which  yields that  $P' = \gamma_2(P)/\gamma_3(P)$ is minimal normal in $G/\gamma_3(P)=G$, against the assumption on  \(M\).

In conclusion, $P'$ is a minimal normal subgroup of $G$ and \(G\) satisfies the hypotheses of Proposition~\ref{maxiemendamento}, so all the desired conclusions follow. Hence,  the proof that $G$ also satisfies (c) is complete, 
but this is a contradiction by the choice of $G$. This completes the proof of the theorem.
\end{proof}

\begin{rem}
\label{spiegazione} Let \(G\) be a solvable group such that
\(\Delta(G)\) has diameter \(3\). Then, assuming the notation and
the conclusions of Theorem A, \(G/\gamma_3(P)\) is a group as in part (c) of  Theorem~\ref{class}, and so \(\Delta(G/\gamma_3(P))\) (which has the
same vertices of \(\Delta(G)\)) is disconnected with components
$\pi_1 = \{ p \} \cup \pi (\fitt G/\fit G)$ and $\pi_2 = \pi (G/\fitt G)$. Thus,
both \(\pi_1\) and \(\pi_2\) induce complete subgraphs of
\(\Delta(G)\). Note that $|\pi_2| \geq 2$, as otherwise $\Delta(G)$ would have a complete vertex. 
Also, \(\Delta(G/P')\) is a disconnected graph  
with components subgraphs 
\(\pi_1\setminus\{p\}\) and \(\pi_2\). 

Now, we have that  $G = PH$, with $P$ a $p$-group, $\gamma_3(P)\ne 1$, and $H/\cent HP$ embeds in $\Gamma (p^n)$.
So, setting \(d=|G/\fitt G|\), an application of Lemma~\ref{semilinear0} yields that \(d\) divides \(n\), \((p^n-1)/(p^{n/d}-1)\) divides \(|\fitt G/\fit G|\), $(p^n)^3$ divides $|P|$ and \(d\) is coprime to \(p^n-1\). As a consequence, on one hand
\cite[Theorem 5.1]{L} yields \(|\pi_1 \setminus \{p\}|\geq2^{|\pi_2|}-1\). On the other hand, we easily get $2 \not \in \pi_2$; in particular, as \(|\pi_2|\geq 2\), we also get that $n$  is divisible by two odd primes, as stated in part (d) of Theorem~A.   

Also, we note that \(d_G(p,v)\leq 2\) for every \(v\in\V G\). In
fact, assume that \(d_G(p,v) = 3\) and let  \(p-t-r-v\) be a  path  in \(\Delta(G)\);
then we get \(t\in\pi_1\), \(r\in\pi_2\) and, given \(\chi\in\irr
G\) such that \(tr\mid\chi(1)\), the prime \(p\) does not divide
\(\chi(1)\). But now \(\chi\) is in fact in \(\irr{G/P'}\) and hence
\(t\) is adjacent to \(r\) in \(\Delta(G/P')\), a contradiction.

Finally, we sketch the proof  that Lewis' example in \cite{Lew2} is of the smallest possible order. 
In fact, as observed above, $|G|$ is a multiple of 
$$(p^n)^3\cdot\frac{p^n-1}{p^{n/d}-1}\cdot d\; . $$
The smallest value of such an integer is attained for $p=2$ and $n=d= 15$, that is $2^{45}\cdot (2^{15}-1)\cdot 15$,  which is precisely the order of Lewis' group.  \end{rem}
We conclude with a proof of Corollary~B.
\begin{proof}[Proof of Corollary B] In the setting of Theorem A, we have that \(G/F\) is a group whose Sylow subgroups are all cyclic. This implies that \(G/F\) is a metacyclic group and, since it is not nilpotent, the desired conclusion follows.
\end{proof}


%
%
%
\thebibliography{12}

\bibitem{B} B. Beisiegel, \emph{Semi-extraspezielle $p$-Gruppen},
Math. Z. {\bf 156} (1977), 247--254.

\bibitem{D} S. Dolfi, \emph{On independent sets in the class graph of a
finite group}, J. Algebra {\bf 303}  (2006), 216--224.

\bibitem{G} W. Gr\"obner, \emph{Matrizenrechnung}, M\"unchen, 1956.

\bibitem{He} H. Heineken, \emph{Gruppen mit kleinen abelschen Untergruppen},
Arch. Math. {\bf 29} (1977), 20--31.

\bibitem{H} B. Huppert, \emph{Endliche Gruppen I}, Springer, Berlin, 1983.

\bibitem{HB} B. Huppert, N. Blackburn, \emph{Finite groups II}, Springer, Berlin, 1982.

\bibitem{I} I.M. Isaacs, \emph{Character Theory of Finite Groups}, AMS Chelsea Publishing,  2006.


\bibitem{Lew0} M.L. Lewis, \emph{An overview of graphs associated with character degrees and conjugacy class sizes in finite groups}, Rocky Mountain J.  Math. {\bf 38} (2008), 175--211.

\bibitem{Lew2}  M.L. Lewis,  \emph{A solvable group whose character degree
graph has diameter $3$}, Proc. Amer. Math. Soc.  {\bf 130}  (2002),  625--630.

\bibitem{L} M.L. Lewis, \emph{Solvable groups whose degree graphs have two connected components}, J. Group Theory {\bf 4} (2001), 255--275.

\bibitem{LW} M.L. Lewis, D.L. White, \emph{Diameters of degree graphs of nonsolvable groups. II}. J. Algebra {\bf 312} (2007), 634--649.

\bibitem{MW}  O. Manz, T.R. Wolf, \emph{Representations of Solvable Groups},
Cambridge University Press, Cambridge, 1993.

\bibitem{Z} C.P. Morresi Zuccari, \emph{Character graphs with no complete vertices}, J. Algebra  {\bf 353} (2012), 22--30.

\bibitem{PPP2} P.P. Palfy, \emph{On the character degree graph of solvable
groups. II. Disconnected graphs}, Studia Scient. Math. Hungarica {\bf
38} (2001), 339--355.

\bibitem{PPP1} P.P. Palfy, \emph{On the character degree graph of solvable
groups. I. Three primes}, Period. Math. Hungar {\bf
36} (1998), 61--65.


\bibitem{Zh} J.Z. Zhang, \emph{A note on character degrees of finite solvable groups}, Comm. Algebra  {\bf 28} (2000), 4249--4258.

\bibitem{W} E. Warning, \emph{Bemerkungen zur vorstehenden Arbeit von Herrn Chevalley}, Abh. Math. Sem. Univ. Hamburg {\bf 11} (1936), 76--83.

\newpage

\end{document}